\author[1]{Yoshihito Kazashi}
\author[1]{Fabio Nobile}
\author[1]{Eva Vidli\v{c}kov\'a}
\affil[1]{CSQI, \'{E}cole Polytechnique F\'{e}d\'{e}rale de Lausanne, Switzerland}
\title{Stability properties of a projector-splitting scheme for dynamical low rank approximation of random parabolic equations}
\newtheorem{theorem}{Theorem}[section]
\newtheorem{lemma}[theorem]{Lemma}
\newtheorem{remark}[theorem]{Remark}
\newtheorem{proposition}[theorem]{Proposition}
\newtheorem{definition}[theorem]{Definition}
\newtheorem{algorithm}[theorem]{Algorithm}
\DeclareMathOperator{\spn}{span}
\DeclareMathOperator{\kernel}{ker}
\newcommand{\dx}{\mathrm{d}}
\newcommand{\R}{\mathbb{R}}
\newcommand{\N}{\mathbb{N}}
\newcommand{\lr}{L^2_{\rho}}
\newcommand{\lhr}{L^2_{\hat{\rho}}}
\newcommand{\lrz}{L^2_{\rho,0}}
\newcommand{\lhrz}{L^2_{\hat{\rho},0}}
\newcommand{\tr}{_{\mathrm{true}}}
\newcommand{\rdx}{\dx \rho}
\newcommand{\intG}[1]{\int_{\Gamma}{#1} \;\rdx}
\newcommand{\intD}[1]{\int_{D}{#1} \;\dx x}
\newcommand{\eR}[1]{1\leq {#1} \leq R}
\newcommand{\Ldet}{\mathcal{L}_{\mathrm{det}}}
\newcommand{\Cdet}{C_{\mathrm{det}}}
\newcommand{\Cstoch}{C_{\mathrm{stoch}}}
\newcommand{\Lstoch}{\mathcal{L}_{\mathrm{stoch}}}
\newcommand*{\ykcst}[1]{\relax\ifmmode\text{\textcolor{blue}{\sout{\ensuremath{#1}}}}\else\textcolor{blue}{\sout{#1}}\fi}
\newcommand{\Ccoerc}{C_{\mathcal{L}}}
\newcommand{\Cbound}{C_{\mathcal{B}}}
\newcommand{\Cinv}{C_{\mathrm{I}}}
\newcommand{\Cemb}{C_{\mathrm{P}}}
\newcommand{\lrV}{L^2_{\rho}(\Gamma;V)}
\newcommand{\lrVz}{L^2_{\rho,0}(\Gamma;V)}
\newcommand{\lrH}{L^2_{\rho}(\Gamma;H)}
\newcommand{\lrHz}{L^2_{\rho,0}(\Gamma;H)}
\newcommand{\lrVdual}{L^2_{\rho}(\Gamma;V')}
\newcommand{\corr}[1]{{\textcolor{black}{#1}}}
\newcommand{\astoch}{a_{\mathrm{stoch}}}
\newcommand{\hr}{_{\hat{\rho}}}
\newcommand{\Pro}{\mathcal{P}}
\newtheorem{propap}{Proposition}
\begin{document}
	\maketitle
	\begin{abstract}
	We consider the Dynamical Low Rank (DLR) approximation of random parabolic equations and propose a class of fully discrete numerical schemes.
	Similarly to the continuous DLR approximation, our schemes are shown to satisfy a discrete variational formulation.
	By exploiting this property, we establish stability of our schemes: 
	we show that our explicit and semi-implicit versions are conditionally stable under a ``parabolic'' type CFL condition which does not depend on the smallest singular value of the DLR solution; whereas our implicit scheme is unconditionally stable. 	Moreover, we show that, in certain cases, the semi-implicit scheme can be unconditionally stable if the randomness in the system is sufficiently small.
	Furthermore, we show that these schemes can be interpreted as projector-splitting integrators and are strongly related to the scheme proposed in \cite{Lubich14,Lubich15}, 
	 to which our stability analysis applies as well. The analysis is supported by numerical results showing the sharpness of the obtained stability conditions.
\end{abstract}

\noindent\textbf{Key words. } random parabolic equations, reduced basis methods, dynamical low rank approximation, stability estimates\\

\noindent\textbf{AMS subject classification. } 35R60, 35K15, 65M12, 65L04, 65F30

	\section{Introduction}
	\label{intro}
	Many physical and engineering applications are modeled by time-dependent partial differential
	equations (PDEs) with input data often subject to uncertainty due to measurement
	errors or insufficient knowledge. These uncertainties can be often described by means of
	probability theory by introducing a set of random variables into the system. In the present work, we consider a random evolutionary PDE
	\begin{equation}\label{eq:intropr}
		\dot{u} + \mathcal{L}(u) = f
	\end{equation}
	with random initial condition, random forcing term and a random linear elliptic operator $\mathcal{L}$. Many of the numerical methods used to approximate such problems, require evaluating the, possibly expensive, model in many random parameters. In this regard, the use of reduced order models (e.g. Proper orthogonal decomposition \cite{Berkooz93,Carlberg11} or generalized Polynomial chaos expansion \cite{Wiener38,Xiu02,Lemaitre10,Cohen11,Nobile09}) is of a high interest. 
	
	When the dependence of the solution on the random parameters significantly changes in time, the use of time-varying bases is very appealing. In the present work, we consider the dynamical low rank (DLR) approximation (see \cite{Sapsis09,Musharbash15,Cheng13,Lubich08,Meyer00,Koch06,Koch07,Feppon18}) which allows both the deterministic and stochastic basis functions to evolve in time while exploiting the structure of the differential equation. An extension to tensor differential equations was proposed in \cite{Koch10,Lubich13}. The DLR approximation of the solution is of the form 
	\begin{equation}\label{eq:introDLRexp}
		u(t) = \bar{u}(t) + \sum_{j=1}^R U_j(t) Y_j(t),\qquad t\in(0,T],
	\end{equation}
	where $R$ is the \emph{rank} of the approximation and is kept fixed in time, $\bar{u}(t) = \mathbb{E}[u(t)]$ is the mean value of the DLR solution, $\{U_j(t)\}_{j=1}^R$ is a time dependent set of deterministic basis functions, $\{Y_j(t) \}_{j=1}^R$ is a time dependent set of zero mean stochastic basis functions.  By suitably projecting the residual of the differential equation one can derive evolution equations for the mean value $\bar{u}$ and the deterministic and stochastic modes $\{U_j\}_{j=1}^R,\,\{Y_j\}_{j=1}^R$ (see \cite{Sapsis09,Koch07}), which in practice need to be solved numerically. An efficient and stable discretization scheme is therefore of a high interest.
	
	In \cite{Sapsis09,Koch07}, Runge-Kutta methods of different orders were applied directly to the system of evolution equations for the deterministic and stochastic basis functions. In the presence of small singular values in the solution, the system of evolution equations becomes stiff as an inversion of a singular or nearly-singular matrix is required to solve it. Applying standard explicit or implicit Runge-Kutta methods leads to instabilities (see \cite{Kieri16}). In this respect, the projector-splitting integrators (proposed in \cite{Lubich14,Lubich15} and applied in e.g. \cite{Einkemmer18,Einkemmer19}) are very appealing. In \cite{Kieri16}, the authors showed that when applying the projector-splitting method for matrix differential equations one can bound the error independently of the size of the singular values, under the assumption that $f - \mathcal{L}$ maps onto the tangent bundle of the manifold of all $R$-rank functions up to a small error of magnitude $\varepsilon$. A limitation of their theoretical result, as the authors point out, is that it requires a Lipschitz condition on $f - \mathcal{L}$ and is applicable to discretized PDEs only under a severe condition $\triangle t L \ll 1$ where $\triangle t$ is the step size and $L$ is the Lipschitz constant, even for implicit schemes. Such condition is, however, not observed in numerical experiments. In \cite{Kieri18}, the authors proposed projected Runge-Kutta methods, where following a Runge-Kutta integration, the solution first leaves the manifold of $R$-rank functions by increasing its rank, and then is retracted back to the manifold. Analogous error bounds as in \cite{Kieri16} are obtained, also for higher order schemes, under the same $\varepsilon$-approximability condition on $f - \mathcal{L}$ and under a restrictive parabolic condition on the time step. 
	
	In this work we propose a class of numerical schemes to approximate the evolution equations for the mean, the deterministic basis and the stochastic basis, which can be of explicit, semi-implicit or implicit type. Although not evident at first sight, we show that the explicit version of our scheme can be reinterpreted as a projector-splitting scheme, whenever the discrete solution is full-rank, and is thus equivalent to the scheme from \cite{Lubich14,Lubich15}. However, our derivation allows for an easy construction of implicit or semi-implicit versions.
	
	The main goal of this work is to prove the stability of the proposed numerical schemes for a parabolic problem \eqref{eq:introDLRexp}. We first show that the continuous DLR solution satisfies analogous stability properties as the weak solution of the parabolic problem \eqref{eq:intropr}. We then analyze the stability of the fully discrete schemes. Quite surprisingly, the stability properties of both the discrete and the continuous DLR solutions do not depend on the size of their singular values, even without any $\varepsilon$-approximability condition on $f - \mathcal{L}$.  The implicit scheme is proven to be unconditionally stable. This improves the stability result which could be drawn from the error estimates derived in \cite{Kieri16}. The explicit scheme remains stable under a standard parabolic stability condition between time and space discretization parameters for an explicit propagation of parabolic equations. The semi-implicit scheme is generally only conditionally stable under again a parabolic stability condition, and becomes unconditionally stable under some restrictions on the size of the randomness of the operator. As an application of the general theory developed in this paper, we consider the case of a  heat equation with a random diffusion coefficient. We dedicate a section to particularize the numerical schemes and the corresponding stability results to this problem. The semi-implicit scheme turns out to be always unconditionally stable if the diffusion coefficient depends affinely on the random variables. The sharpness of the obtained stability conditions on the time step and spatial discretization is supported by the numerical results provided in the last section.
	
	A big part of the paper is dedicated to proving a variational formulation of the discretized DLR problem, analogous to the variational formulation of the continuous DLR problem (see \cite[Prop. 3.4]{Musharbash15}). Such formulation is a key for showing the stability properties and, as we believe, might be useful for some further analysis of the proposed discretization schemes. It as well applies to the projector-splitting integrator from \cite{Lubich14,Lubich15} provided the solution remains full rank at all time steps. However, in the rank-deficient case, our schemes may result in different solutions. We dedicate a subsection to show that a rank-deficient solution obtained by our scheme still satisfies a suitable discrete variational formulation and consequently has the same stability properties as the full-rank case. 
	
	The outline of the paper is the following: in Section~\ref{sec:prstatement} we introduce the problem and basic notation; in Section \ref{sec:DLR} we describe the DLR approximation and recall its geometrical interpretation with variational formulation. In Section \ref{sec:disc} we describe the discretization of the DLR method and propose three  types of time integration schemes. We then derive a variational formulation for the discrete DLR solution and show its reinterpretation as a projector-splitting scheme. Section \ref{sec:stabest} is dedicated to proving the stability properties of both continuous and discrete DLR solution. In Section \ref{sec:heateq}, we analyze the case of a heat equation with random diffusion coefficient and random initial condition. Finally in Section \ref{sec:numres} we present several numerical tests that support the derived theory. Section \ref{sec:conclusions} draws some conclusions.
	
	\section{Problem statement}\label{sec:prstatement}
	
	We start by introducing some notation. Let $(\Gamma, \mathcal{F}, \rho)$ be a probability space. 
	Consider the Hilbert space $\lr = \lr(\Gamma)$ of real valued random variables on $\Gamma$ with bounded second moments, with associated scalar product $\langle v,w\rangle_{\lr} = \intG{vw}$ and norm $\|v\|_{\lr} = \sqrt{\langle v,v\rangle_{\lr}}$. Consider as well two separable Hilbert spaces $H$ and $V$ with scalar products $\langle \cdot, \cdot \rangle_H$, $\langle \cdot, \cdot \rangle_V$, respectively. Suppose that $H$ and $V$  form a Gelfand triple $(V,H,V')$, i.e.\ $V$ is a dense subspace of $H$ and the embedding $V\hookrightarrow H$ is continuous with a continuity constant $\Cemb>0$.
	Let $\lrV,\, \lrH$ be the Bochner spaces of square integrable $V$ (resp. $H$) valued functions on $\Gamma$ with scalar products 
	\begin{align*}
		\langle v,w\rangle_{H,\lr} &= \intG{\langle v,w \rangle_H},\qquad v,w\in \lrH\\
		\langle v,w\rangle_{V,\lr} &= \intG{\langle v,w \rangle_V},\qquad v,w\in \lrV,
	\end{align*}
	respectively.
	Then, $(\lrV,\; \lrH,\;\lrVdual)$ is a Gelfand triple as well (see e.g. \cite[Th. 8.17]{Leoni17}), and we have \begin{equation}\label{eq:embed_const}
		\|v\|_{H,\lr}\leq \Cemb \|v\|_{V,\lr}\qquad\forall v\in \lrV.
	\end{equation}
	We define the mean value of a random variable $v$ as $\mathbb{E}[v] = \intG{v}$, where the integral here denotes the Bochner integral in a suitable sense, depending on the co-domain of the random variable considered. In what follows, we will use the notation $\bar{v} = \mathbb{E}[v]$ and $v^* := v - \bar{v}$.  Moreover, we let $(\cdot, \cdot)_{V'V,\lr}$ denote the dual pairing between $\lrVdual$ and $\lrV$: \[(\mathcal{K}, v)_{V'V,\lr} = \int_{\Gamma}\big(\mathcal{K}(\omega), v(\omega)\big)_{V'V}\:\dx \rho(\omega),\quad \mathcal{K}\in \lrVdual,\; v\in \lrV.\]
	With this notation at hand, we now consider a random operator $\mathit{L}$ with values in the space of linear bounded operators from $V$ to $V'$ that is uniformly bounded and  coercive, i.e. a Borel measurable function
	\begin{align*}
		\hspace{3cm}\mathit{L}: \quad&\Gamma & &\to & &\mathfrak{L}(V, V')\hspace{3cm}\\
		& \omega & &\mapsto & &\mathit{L}(\omega)
	\end{align*}
	such that there exist $\Ccoerc, \Cbound > 0$ satisfying
	\begin{align}
		\big( \mathit{L}(\omega) v,v\big)_{V' V} &\geq \Ccoerc \|v\|_{V}^2 & &\qquad \forall \omega\in\Gamma,\;\forall v\in V,\label{eq:coerc_const}\\
		\big( \mathit{L}(\omega) v,w\big)_{V' V} &\leq \Cbound \|v\|_V \|w\|_{V} & &\qquad \forall\omega\in\Gamma,\;\forall v,w\in V.\label{eq:bound_const}
	\end{align} 
	Associated to the random operator $\mathit{L}$, we introduce the operator $\mathcal{L}$, defined as
	\begin{align*}
		\mathcal{L}:\quad &\lr(\Gamma; V) & &\to & &\lr(\Gamma;V')\\
		&u& &\mapsto & &\mathcal{L}(u) : \quad \mathcal{L}(u)(\omega) = \mathit{L}(\omega) u(\omega)\in V' \quad\forall \omega\in \Gamma.
	\end{align*}
	Notice that for any strongly measurable $u:\Gamma \to V$ the map $\omega\in\Gamma \mapsto L(\omega)u(\omega)\in V'$ is strongly measurable, $V'$ being separable, see Proposition~\ref{prop:m-bility} in the appendix.
	From the uniform boundedness of $\mathit{L}$ it follows immediately that, if $u$ is square integrable, then $\mathcal{L}(u)$ is square integrable as well and $\|\mathcal{L}(u)\|_{\lr(\Gamma; V')}\leq \Cbound \|u\|_{\lr(\Gamma; V)}$,  $\forall u\in \lr(\Gamma;V)$. The operator $\mathcal{L}$ induces a bilinear form on $\lrV$ defined as $$\langle v,w \rangle_{\mathcal{L},\rho}:= \int_{\Gamma} \big( \mathcal{L}(v)(\omega), w(\omega) \big)_{V' V}\dx \rho(\omega),\qquad v,w\in \lrV,$$ which is coercive and bounded with coercivity and continuity constant $\Ccoerc$ and $\Cbound$, respectively, i.e.
	\begin{align*}
		\langle v,v\rangle_{\mathcal{L},\rho} &\geq \Ccoerc\|v\|_{V,\lr}^2,\\
		\langle u,v\rangle_{\mathcal{L},\rho} &\leq \Cbound\|u\|_{V,\lr}\|v\|_{V,\lr}.
	\end{align*}
	Then, given a final time $T>0$, a random forcing term $f\in L^2(0,T; \lrH)$
	and a random initial condition $u_0\in \lrV$, we consider now the following parabolic problem: Find a solution $u\tr\in L^2(0,T; \lrV)$ with $\dot{u}\tr\in L^2(0,T; \lrVdual)$ satisfying 
	
	\begin{align}\label{eq:pr1}
		\begin{split}
			\big( \dot{u}\tr, v\big)_{V'V,\lr} + \big(\mathcal{L}(u\tr),v\big)_{V'V,\lr} &= \langle f,v\rangle_{H,\lr}, \\
			&\forall v\in \lrV, \;\text{a.e.\ } t\in (0,T]\\
			u\tr(0) &= u_0.
		\end{split}
	\end{align}
	The general theory of parabolic equations (see e.g.\ \cite{Wloka87}) can be applied to problem \eqref{eq:pr1}, at least in the case of $\lrV, \lrH, \lrVdual$ being separable, e.g.\ when $\Gamma$ is a Polish space and $\mathcal{F}$ is the corresponding Borel $\sigma$-algebra. We conclude then that problem \eqref{eq:pr1} has a unique solution $u\tr$ which depends continuously on $f$ and $u_0$. We note that the theory of parabolic equations would allow for less regular data $f\in L^2(0,T; \lrVdual)$ and $u_0\in \lrH$. However, in this work we restrict our attention to the case $f\in L^2(0,T; \lrH)$, $u_0\in \lrV$.
	
	\section{Dynamical low rank approximation \corr{and its variational formulation}}\label{sec:DLR}
	
	Dynamical low rank (DLR) approximation, or dynamically orthogonal (DO) approximation (see e.g. \cite{Koch07,Sapsis09,Koch07b}) seeks an approximation of the solution $u\tr$ of problem \eqref{eq:pr1} in the form 
	\begin{equation}\label{eq:DLRexp}
		u(t) = \bar{u}(t) + \sum_{j=1}^R U_j(t) Y_j(t),\qquad t\in[0,T]
	\end{equation}
	where $\bar{u}(t)\in V$, $\{U_j(t)\}_{j=1}^R \subset V$ is a time dependent set of linearly independent deterministic basis functions, $\{Y_j(t)\}_{j=1}^R\subset\lr$ is a time dependent set of linearly independent stochastic basis functions. In what follows, we focus on the so called Dual DO formulation (see e.g.\  \cite{Musharbash15}), in which the stochastic basis $\{Y_j(t)\}_{j=1R}^R\subset\lr$ is kept orthonormal in $\langle\cdot,\cdot\rangle_{\lr}$ at all times whereas $\{U_j(t)\}_{j=1}^R$ are only required to be linearly independent at all times. We call $R$ the rank of a function $u$ of the form \eqref{eq:DLRexp}. To ensure the uniqueness of the expansion \eqref{eq:DLRexp} for a given initialization $u(0) = \bar{u}(0) + \sum_{j=1}^R U_j(0)Y_j(0)$, we consider the following conditions:  \begin{equation}\label{eq:ONcond}
		\langle Y_i(t), Y_j(t)\rangle_{\lr} = \delta_{ij},\qquad \mathbb{E}[Y_j(t)] = 0,\quad \forall 1\leq i,j\leq R,\quad \forall t\in [0,T]
	\end{equation}
	and the gauge condition (also called DO condition)
	\begin{equation}\label{eq:DOcond}
		\langle \dot{Y}_i(t),Y_j(t)\rangle_{\lr} = 0,\quad \forall 1\leq i,j\leq R,\quad \forall t\in (0,T)
	\end{equation}
	(see \cite{Kazashi20}).
	
	Plugging the DLR expansion \eqref{eq:DLRexp} into the equation \eqref{eq:pr1} and following analogous steps as proposed in \cite{Sapsis09} leads to the DLR system of equations presented next.
	
	\begin{definition}[DLR solution]\label{def:contDLRsol}
		We define the DLR solution of problem \eqref{eq:pr1} as $$u(t) = \bar{u}(t) + \sum_{i=1}^R U_i(t)Y_i(t)\quad\in \lrV$$ where $\bar{u},\,\{U_i\}_{i=1}^R,\, \{Y_i\}_{i=1}^R$ are solutions of the following system of equations:
		\begin{align} 
			& (\dot{\bar{u}}, v)_{V'V} + (\mathbb{E}[\mathcal{L}(u)], v)_{V'V} = \langle \mathbb{E}[f],v\rangle_{H} & &\forall v\in V \label{eq:DLReq1}\\[6pt]
			& (\dot{U}_j, v)_{V'V} + ( \mathbb{E}[\mathcal{L}(u)Y_j], v)_{V' V} = \langle \mathbb{E}[f Y_j], v\rangle_{H} & &\forall v\in V,\, j=1,\dots,R   \label{eq:DLReq2}\\
			&  \dot{Y}_j + \sum_{i=1}^R (M^{-1})_{j,i}\Pro_{\mathcal{Y}}^{\perp}\Big[( \mathcal{L}^*(u), U_i)_{V' V}  - \langle f^*,U_i\rangle_H \Big] = 0 & &\text{ in } \lr,\, j=1,\dots,R \label{eq:DLReq3}
		\end{align}
		with the initial conditions $\bar{u}(0),\, \{Y_j(0)\}_{j=1}^R,\,\{U_j(0)\}_{j=1}^R$ such that $\bar{u}(0)\in V$, $\{Y_j(0)\}_{j=1}^R$ satisfies the conditions \eqref{eq:ONcond}, $\{U_j(0)\}_{j=1}^R$ are linearly independent in $V$, and $\bar{u}(0)\, +\, \sum_{j=1}^R Y_j(0)U_j(0)$ is a good approximation of $u_0$.
		In \eqref{eq:DLReq3}, the matrix $M\in\mathbb{R}^{R\times R}$ is defined as $M_{ij} := \langle U_i, U_j\rangle_H$, $1\leq i,j\leq R$ and  $\Pro_{\mathcal{Y}}^{\perp}$ denotes the orthogonal projection operator in the space $\lr(\Gamma)$ on the orthogonal complement of the $R$-dimensional subspace $\mathcal{Y} = \spn\{Y_1,\dots,Y_R\}$, i.e.\ 
		\begin{equation}\label{eq:Yproj}
			\Pro_{\mathcal{Y}}^{\perp}[v] = v - \Pro_{\mathcal{Y}}[v] = v - \sum_{j=1}^R \langle v, Y_j\rangle_{\lr} Y_j,\qquad \text{for }v\in\lr.
		\end{equation}
	\end{definition}
	For the initial condition one can use for instance a truncated Karhunen-Lo\`{e}ve expansion  $u(0) = \bar{u}(0) + \sum_{i=1}^R U_i(0) Y_i(0)$ where $ \bar{u}(0) = \mathbb{E}[u_0]$, $\{U_i(0)\}_{i=1}^R$ are the first $R$ (rescaled) eigenfunctions of the covariance operator $\mathcal{C}_{u_0}: H\to H$ defined as \[\langle\mathcal{C}_{u_0} v, w\rangle_H = \mathbb{E}[\langle u_0 - \bar{u}(0), v\rangle_H \langle u_0 - \bar{u}(0),w\rangle_H]\qquad \forall v,w\in H\]
	and $Y_i = \langle u_0 - \bar{u}(0), U_i\rangle_H$ (the eigenfunctions are suitably rescaled so that $\mathbb{E}[Y_i^2] = 1$). We note that for $u_0\in\lrV$, the eigenfunctions $\{U_i(0)\}_{i=1}^R$ are  in $V$.

	In what follows we will use the notation $U = (U_1,\dots,U_R)$ and $Y = (Y_1,\dots,Y_R)$. Then, the approximation \eqref{eq:DLRexp} reads $u = \bar{u} + U Y^{\intercal}$.
	
	\corr{The rest of the section gives a geometrical interpretation of the DLR method and derives a variational formulation, following to a large extent derivations from \cite{Musharbash15}.} Such geometrical interpretation and consequent variational formulation will be key to derive the stability results of the numerical schemes, discussed in Section \ref{sec:stabestDLR}. We first introduce the notion of a manifold of $R$-rank functions, characterize its tangent space in a point as well as the orthogonal projection onto the tangent space.
	
	The vector space consisting of all square integrable random variables with zero mean value will be denoted by $\lrz = \lrz(\Gamma)\subset\lr(\Gamma)$. 
	
	\begin{definition}[Manifold of $R$-rank functions]
		By $\mathcal{M}_R\subset \lrVz$ we denote the manifold consisting of all rank $R$ random functions with zero mean 
		\begin{align}\label{eq:DLRman}
			\begin{split}
				&\mathcal{M}_R = \Big\{v^*\in \lrVz \,|\; v^* = \sum_{i=1}^R U_i Y_i = UY^\intercal,\\
				&\hspace{1.3cm} \langle Y_i, Y_j\rangle_{\lr} = \delta_{ij}, \,\forall \eR{i,j}, \, \{U_i \}_{i=1}^R \text{ linearly independent}\Big\}.
			\end{split}
		\end{align}
	\end{definition}
	It is well known that $\mathcal{M}_R$ admits an infinite dimensional Riemannian manifold structure (\cite{Falco19}).
	
	\begin{proposition}[Tangent space at $UY^\intercal$]
		The tangent space $\mathcal{T}_{UY^\intercal}\mathcal{M}_R$ at a point $U Y^\intercal \in\mathcal{M}_R$ can be characterized as
		\begin{multline}\label{eq:DLRtang}
			\mathcal{T}_{UY^\intercal}\mathcal{M}_R = \Big\{  \delta v\in \lrVz\,|\, \delta v = \sum_{i=1}^R U_i\delta Y_i + \delta U_i Y_i,\\
			\delta U_i\in V,\;\delta Y_i\in\lrz,\; \;\langle \delta Y_i,Y_j\rangle_{\lr} = 0,\,\forall\eR{i,j}\Big\}.
		\end{multline}
	\end{proposition}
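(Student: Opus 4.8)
The plan is to establish the characterization of $\mathcal{T}_{UY^\intercal}\mathcal{M}_R$ by differentiating smooth curves on $\mathcal{M}_R$ passing through the given point $UY^\intercal$. First I would take an arbitrary smooth curve $t \mapsto v^*(t) \in \mathcal{M}_R$ with $v^*(0) = UY^\intercal$; by the definition of $\mathcal{M}_R$, we may write $v^*(t) = \sum_{i=1}^R U_i(t) Y_i(t)$ with $\langle Y_i(t), Y_j(t) \rangle_{\lr} = \delta_{ij}$, $\{U_i(t)\}$ linearly independent, and $U_i(0) = U_i$, $Y_i(0) = Y_i$. Differentiating at $t=0$ gives $\delta v := \dot{v}^*(0) = \sum_{i=1}^R \dot U_i(0) Y_i + U_i \dot Y_i(0)$, which is already of the claimed form with $\delta U_i = \dot U_i(0) \in V$, $\delta Y_i = \dot Y_i(0) \in \lrz$ (the zero-mean property is inherited from $Y_i(t) \in \lrz$ for all $t$). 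Differentiating the orthonormality constraint $\langle Y_i(t), Y_j(t)\rangle_{\lr} = \delta_{ij}$ at $t=0$ yields $\langle \dot Y_i(0), Y_j \rangle_{\lr} + \langle Y_i, \dot Y_j(0)\rangle_{\lr} = 0$, i.e.\ the matrix $\big(\langle \delta Y_i, Y_j\rangle_{\lr}\big)_{ij}$ is skew-symmetric. This gives one inclusion but, as stated, does \emph{not} immediately yield the stronger pointwise condition $\langle \delta Y_i, Y_j\rangle_{\lr} = 0$.

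To obtain the stated characterization, I would exploit the freedom in the representation \eqref{eq:DLRexp}: the decomposition $v^* = UY^\intercal$ is not unique, since $(U, Y) \mapsto (UA^{-\intercal}, YA)$ for any orthogonal $A(t) \in O(R)$ gives the same $v^*$ while preserving all constraints. Using this gauge freedom, given any curve $v^*(t)$ one can choose the representatives $Y_i(t)$ so that $\langle \dot Y_i(t), Y_j(t)\rangle_{\lr} = 0$ holds — this is exactly the role of the gauge/DO condition \eqref{eq:DOcond}, and the relevant statement is that such a gauge can always be imposed by solving a linear ODE for $A(t)$ with skew-symmetric right-hand side (so $A(t)$ stays orthogonal). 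Thus every tangent vector has a representative with $\langle \delta Y_i, Y_j\rangle_{\lr} = 0$, which shows $\mathcal{T}_{UY^\intercal}\mathcal{M}_R$ is contained in the right-hand side of \eqref{eq:DLRtang}.

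For the reverse inclusion, given $\delta U_i \in V$ and $\delta Y_i \in \lrz$ with $\langle \delta Y_i, Y_j\rangle_{\lr} = 0$, I would construct an explicit admissible curve realizing $\delta v = \sum_i U_i \delta Y_i + \delta U_i Y_i$ as its derivative: a natural candidate is $U_i(t) = U_i + t\,\delta U_i$ together with $Y_i(t)$ obtained by a small Gram--Schmidt-type orthonormalization of $Y_i + t\,\delta Y_i$. One checks that $\{U_i(t)\}$ remains linearly independent for small $t$, that $Y_i(t)$ is well-defined, orthonormal, zero-mean, and that $\dot Y_i(0) = \delta Y_i$ — here the hypothesis $\langle \delta Y_i, Y_j\rangle_{\lr} = 0$ is what makes the orthonormalization correction vanish to first order, so that no spurious term appears in $\dot v^*(0)$. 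Hence $\delta v$ is indeed a tangent vector.

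The main obstacle is the gauge-freedom argument in the first inclusion: one must argue carefully that the skew-symmetry of $\big(\langle \delta Y_i, Y_j\rangle_{\lr}\big)_{ij}$ can always be gauged away, i.e.\ that for a given curve one can pass to an equivalent curve (same trace on $\mathcal{M}_R$) whose stochastic basis satisfies the DO condition at $t=0$. Concretely this amounts to solving $\dot A(t) = A(t) S(t)$ with $S$ skew-symmetric (so $A(t) \in O(R)$) chosen to cancel the off-diagonal terms $\langle \dot Y_i(t), Y_j(t)\rangle_{\lr}$; the existence and orthogonality-preservation of this ODE solution is standard but is the conceptual crux. Everything else — the measurability/regularity bookkeeping, the Gram--Schmidt construction, and the linear-independence openness — is routine.
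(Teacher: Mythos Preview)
The paper does not actually prove this proposition: it is stated without proof, and the reader is referred to \cite{Musharbash15} for details (the sentence ``For more details, see e.g.\ \cite{Musharbash15}'' appearing just after the projection proposition covers both). So there is no proof in the paper to compare against; your argument stands on its own, and it is essentially correct.

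One remark that simplifies your ``main obstacle'': you do not need to solve an ODE on $O(R)$ to gauge away the skew-symmetric part. The gauge freedom can be exploited purely algebraically at the level of tangent vectors. Given a representation $\delta v = \sum_i U_i\,\delta Y_i + \delta U_i\,Y_i$ with $S_{ij} := \langle \delta Y_i, Y_j\rangle_{\lr}$ skew-symmetric, set
\[
\delta Y_i' := \delta Y_i - \sum_j S_{ij} Y_j,\qquad \delta U_i' := \delta U_i + \sum_j S_{ji} U_j.
\]
A direct computation (using $S_{ji}=-S_{ij}$) shows $\sum_i U_i\,\delta Y_i' + \delta U_i'\,Y_i = \delta v$ and $\langle \delta Y_i', Y_k\rangle_{\lr} = S_{ik}-S_{ik}=0$. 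This replaces the curve-level gauge argument by a one-line substitution and avoids any regularity discussion for the ODE $\dot A = AS$. The reverse inclusion via the explicit curve $U_i(t)=U_i+t\,\delta U_i$ and Gram--Schmidt on $Y_i+t\,\delta Y_i$ is fine as you wrote it; the hypothesis $\langle \delta Y_i, Y_j\rangle_{\lr}=0$ indeed makes the orthonormalization correction $O(t^2)$, so $\dot Y_i(0)=\delta Y_i$.
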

	
	\begin{proposition}[Orthogonal projection on $\mathcal{T}_{UY^\intercal}\mathcal{M}_R$]
		The $\lrHz$-orthogonal projection $\Pi_{UY^{\intercal}}[v]$ of a function $v\in\lr(\Gamma, H)$ onto the tangent space $\mathcal{T}_{UY^\intercal}\mathcal{M}_R$ is given by
		\begin{align}\label{eq:DLRproj}
			\begin{split}
				\Pi_{UY^{\intercal}}[v] &= \sum_{i=1}^R \langle v, Y_i\rangle_{\lr} Y_i + \Pro_{\mathcal{Y}}^{\perp}[\sum_{i=1}^R\langle v,U_i\rangle_H (M^{-1}U^\intercal)_i]\\
				&= \Pro_{\mathcal{Y}}[v] + \Pro_{\mathcal{Y}}^{\perp}\big[\Pro_{\mathcal{U}}[v]\big] = \Pro_{\mathcal{Y}}[v] + \Pro_{\mathcal{U}}[v] - \Pro_{\mathcal{Y}}\big[\Pro_{\mathcal{U}}[v]\big]
			\end{split}
		\end{align}
		where $\mathcal{U} = \spn\{U_1,\dots,U_R\}$ and $\Pro_{\mathcal{U}}[\cdot]$ is the $H$-orthogonal projection onto the subspace $\mathcal{U}$.
	\end{proposition}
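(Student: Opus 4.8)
The plan is to show that the operator given by the right-hand side of \eqref{eq:DLRproj}, call it $P$, is the $\langle\cdot,\cdot\rangle_{H,\lr}$-orthogonal projection onto $\mathcal{T}_{UY^\intercal}\mathcal{M}_R$; I regard it as acting on $\lrHz$, which is the relevant space here (for a general $v\in\lrH$ one would apply $P$ to $v-\bar v$). By the variational characterisation of the orthogonal projection onto a closed subspace, it is enough to verify \textbf{(i)} $Pv$ lies in $\mathcal{T}_{UY^\intercal}\mathcal{M}_R$, or rather in its $\lrHz$-closure when $v\notin\lrVz$, and \textbf{(ii)} $\langle v-Pv,\delta w\rangle_{H,\lr}=0$ for every $\delta w\in\mathcal{T}_{UY^\intercal}\mathcal{M}_R$. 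Conceptually, the computation is driven by the fact that $\Pro_{\mathcal{Y}}$ (acting on the stochastic variable via $v\mapsto\sum_i\langle v,Y_i\rangle_{\lr}Y_i$) and $\Pro_{\mathcal{U}}$ (the $H$-orthogonal projection onto $\mathcal{U}$, applied $\rho$-a.e.\ in $\omega$) are \emph{commuting} $\langle\cdot,\cdot\rangle_{H,\lr}$-orthogonal projections, so that $P=\Pro_{\mathcal{Y}}+\Pro_{\mathcal{Y}}^{\perp}\Pro_{\mathcal{U}}=I-\Pro_{\mathcal{Y}}^{\perp}\Pro_{\mathcal{U}}^{\perp}$ is automatically the orthogonal projection onto $\operatorname{range}\Pro_{\mathcal{Y}}+\operatorname{range}\Pro_{\mathcal{U}}$; identifying this sum with (the closure of) the tangent space \eqref{eq:DLRtang} then gives the result, and the last line of \eqref{eq:DLRproj} is merely $\Pro_{\mathcal{Y}}^{\perp}=I-\Pro_{\mathcal{Y}}$.

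For \textbf{(i)} I would argue directly on the two summands of \eqref{eq:DLRproj}: the first, $\sum_i\langle v,Y_i\rangle_{\lr}Y_i$, is of the form $\sum_i\delta U_iY_i$ with $\delta U_i=\langle v,Y_i\rangle_{\lr}\in H$ (in $V$ when $v\in\lrVz$); in the second, writing $(M^{-1}U^\intercal)_i=\sum_k(M^{-1})_{ik}U_k$ and using the symmetry of $M$, one rearranges it as $\sum_kU_k\delta Y_k$ with $\delta Y_k=\sum_i(M^{-1})_{ki}\,\Pro_{\mathcal{Y}}^{\perp}[\langle v,U_i\rangle_H]$, which belongs to $\lrz$ (as $\langle v,U_i\rangle_H$ is centred when $v\in\lrHz$) and satisfies $\langle\delta Y_k,Y_j\rangle_{\lr}=0$ for all $j$ by the definition \eqref{eq:Yproj} of $\Pro_{\mathcal{Y}}^{\perp}$; hence $Pv$ has the structure \eqref{eq:DLRtang}. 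For \textbf{(ii)} I would split an arbitrary tangent vector as $\delta w=\sum_k\delta U_kY_k+\sum_kU_k\delta Y_k$ and test the two pieces separately. Testing against $\sum_k\delta U_kY_k$ reduces, by density of $V$ in $H$, to $\Pro_{\mathcal{Y}}[v-Pv]=0$, which is immediate since $\Pro_{\mathcal{Y}}[Pv]=\Pro_{\mathcal{Y}}[v]+\Pro_{\mathcal{Y}}\Pro_{\mathcal{Y}}^{\perp}\Pro_{\mathcal{U}}[v]=\Pro_{\mathcal{Y}}[v]$, using the orthonormality \eqref{eq:ONcond} (which gives $\Pro_{\mathcal{Y}}^2=\Pro_{\mathcal{Y}}$) and $\Pro_{\mathcal{Y}}\Pro_{\mathcal{Y}}^{\perp}=0$. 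Testing against $\sum_kU_k\delta Y_k$ reduces to $\langle v-Pv,U_i\rangle_H=0$ in $\lr$ for each $i$; this follows from the pointwise self-adjointness of $\Pro_{\mathcal{U}}$, which yields $\langle\Pro_{\mathcal{U}}[v],U_i\rangle_H=\langle v,U_i\rangle_H$, together with the identity $\langle\Pro_{\mathcal{Y}}[w],U_i\rangle_H=\Pro_{\mathcal{Y}}[\langle w,U_i\rangle_H]$, whence $\langle Pv,U_i\rangle_H=\Pro_{\mathcal{Y}}[\langle v,U_i\rangle_H]+\Pro_{\mathcal{Y}}^{\perp}[\langle v,U_i\rangle_H]=\langle v,U_i\rangle_H$.

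The algebraic manipulations with $M$, $\Pro_{\mathcal{Y}}$ and $\Pro_{\mathcal{U}}$ are routine; the two points needing genuine care are the following. First, the functional-analytic bookkeeping in \textbf{(i)}: the tangent space sits inside $\lrVz$ and is not $\lrHz$-closed, so one really works with its closure, which one identifies with the orthogonal sum $\mathcal{S}_{\mathcal{Y}}\oplus(\mathcal{S}_{\mathcal{U}}\cap\mathcal{S}_{\mathcal{Y}}^{\perp})$, where $\mathcal{S}_{\mathcal{Y}}=\{\sum_iw_iY_i:w_i\in H\}=\operatorname{range}\Pro_{\mathcal{Y}}$ and $\mathcal{S}_{\mathcal{U}}=\{w:w(\omega)\in\mathcal{U}\ \rho\text{-a.e.}\}=\operatorname{range}\Pro_{\mathcal{U}}$; this uses density of $V$ in $H$, finite-dimensionality of $\mathcal{Y}$, and the orthogonality $\mathcal{S}_{\mathcal{Y}}\perp(\mathcal{S}_{\mathcal{U}}\cap\mathcal{S}_{\mathcal{Y}}^{\perp})$ that makes the sum closed. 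Second, one must check that $\Pro_{\mathcal{Y}}$ and $\Pro_{\mathcal{U}}$ really are $\langle\cdot,\cdot\rangle_{H,\lr}$-orthogonal projections and that they commute: self-adjointness and idempotency of $\Pro_{\mathcal{Y}}$ rest on \eqref{eq:ONcond}, those of $\Pro_{\mathcal{U}}$ are inherited fibrewise from the $H$-orthogonal projection, and the commutativity is a direct computation (both $\Pro_{\mathcal{Y}}\Pro_{\mathcal{U}}[v]$ and $\Pro_{\mathcal{U}}\Pro_{\mathcal{Y}}[v]$ equal $\sum_{i,k,l}(M^{-1})_{kl}\,\mathbb{E}[\langle v,U_k\rangle_HY_i]\,U_lY_i$), which also relies on the strong measurability provided by Proposition~\ref{prop:m-bility}. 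The argument is, all in all, a rearrangement of the computations in \cite{Musharbash15}.
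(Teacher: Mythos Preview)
The paper does not actually supply a proof of this proposition: immediately after the statement it simply writes ``For more details, see e.g.\ \cite{Musharbash15}'' and moves on. So there is nothing to compare against on the paper's side; your proposal stands on its own as a self-contained argument, and it is correct.

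A couple of minor remarks. In part \textbf{(ii)}, when you test against $\sum_k U_k\delta Y_k$, the constraint $\langle\delta Y_k,Y_j\rangle_{\lr}=0$ means you only need $\Pro_{\mathcal{Y}}^{\perp}\big[\langle v-Pv,U_i\rangle_H\big]=0$, not the full identity $\langle v-Pv,U_i\rangle_H=0$; you prove the stronger statement, which is fine but worth noting so the reader is not puzzled about where the constraint went. Your handling of the functional-analytic point---that $\mathcal{T}_{UY^\intercal}\mathcal{M}_R\subset\lrVz$ is not $\lrHz$-closed and one must project onto its closure, identified as $\mathcal{S}_{\mathcal{Y}}+\mathcal{S}_{\mathcal{U}}$ via the density of $V$ in $H$---is more careful than what one typically finds in the literature (including \cite{Musharbash15}), where this issue is usually glossed over. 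The commuting-projections viewpoint ($P=I-\Pro_{\mathcal{Y}}^{\perp}\Pro_{\mathcal{U}}^{\perp}$) is clean and makes the structure transparent.
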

	For more details, see e.g.\  \cite{Musharbash15}. Note that $\Pi_{UY^{\intercal}}[\cdot]$ can be equivalently written as $\Pi_{UY^{\intercal}}[\cdot] = \Pro_{\mathcal{U}}[\cdot] + \Pro_{\mathcal{U}}^{\perp}\big[\Pro_{\mathcal{Y}}[\cdot]\big]$. In the following we will extend the domain of the projection operator $\Pi_{UY^\intercal}$. Further, we will state two lemmas used to establish Theorem \ref{th:varform}, which presents the variational formulation of the DLR approximation.
	
	The operator $\Pi_{UY^\intercal}$ can be extended to an operator from $\lrVdual$ to $\lrVdual$ as
	\begin{align*}
		\Pi_{UY^\intercal}[\mathcal{K}] := \langle \mathcal{K}, Y\rangle_{\lr} Y^\intercal + \Pro_{\mathcal{Y}}^\perp\big[ ( \mathcal{K}, U)_{V' V}M^{-1}U^\intercal \big]\qquad\forall\mathcal{K}\in \lrVdual.
	\end{align*}
	%
	%
	The extended operator satisfies the following.
	
	\begin{lemma}\label{lemma:proj_symm} Let $UY^\intercal\in\mathcal{M}_R$. Then it holds
		\begin{equation}\label{eq:proj_symm}
			(\mathcal{K}, \Pi_{U Y^\intercal}[v] )_{V'V,\lr} = ( \Pi_{U Y^\intercal} [\mathcal{K}], v)_{V'V,\lr},\quad \forall v\in \lrV,\;\mathcal{K}\in \lrVdual.
		\end{equation}
	\end{lemma}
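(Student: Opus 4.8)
The plan is to verify \eqref{eq:proj_symm} by expanding both sides through the explicit formula \eqref{eq:DLRproj} and the definition of the extension of $\Pi_{UY^\intercal}$ to $\lrVdual$, and matching the resulting terms. Write $\Pi_{UY^\intercal}[v]=\Pro_{\mathcal{Y}}[v]+\Pro_{\mathcal{Y}}^{\perp}[\Pro_{\mathcal{U}}[v]]$ for $v\in\lrV$ and $\Pi_{UY^\intercal}[\mathcal{K}]=\langle\mathcal{K},Y\rangle_{\lr}Y^\intercal+\Pro_{\mathcal{Y}}^{\perp}\big[(\mathcal{K},U)_{V'V}M^{-1}U^\intercal\big]$ for $\mathcal{K}\in\lrVdual$, so that each side of \eqref{eq:proj_symm} splits into a ``$\Pro_{\mathcal{Y}}$-contribution'' and a ``$\Pro_{\mathcal{Y}}^{\perp}\Pro_{\mathcal{U}}$-contribution''; I will show these agree separately. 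The only elementary facts I need are: (i) a bounded linear functional commutes with the Bochner integral, so that $\big(\mathcal{K},\int_{\Gamma}g(\omega)\rdx(\omega)\big)_{V'V}=\int_{\Gamma}\big(\mathcal{K},g(\omega)\big)_{V'V}\rdx(\omega)$ for $\mathcal{K}\in V'$, $g\in\lrV$ (and the analogue with $\langle\cdot,\cdot\rangle_H$); (ii) the Gelfand-triple identification $(a,b)_{V'V}=\langle a,b\rangle_H$ for $a,b\in V$; (iii) self-adjointness of $\Pro_{\mathcal{Y}}$ and $\Pro_{\mathcal{Y}}^{\perp}$ on $\lr(\Gamma)$; and (iv) symmetry of $M$, hence of $M^{-1}$.

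\textbf{The $\Pro_{\mathcal{Y}}$-contributions.}
Since the $Y_i$ are real-valued, pulling $Y_i(\omega)$ out of the pairing and using (i) to recognise $\int_{\Gamma}\mathcal{K}(\omega)Y_i(\omega)\rdx(\omega)=\langle\mathcal{K},Y_i\rangle_{\lr}\in V'$, I compute
\[
\big(\mathcal{K},\Pro_{\mathcal{Y}}[v]\big)_{V'V,\lr}=\sum_{i=1}^{R}\big(\mathcal{K},\langle v,Y_i\rangle_{\lr}Y_i\big)_{V'V,\lr}=\sum_{i=1}^{R}\big(\langle\mathcal{K},Y_i\rangle_{\lr},\langle v,Y_i\rangle_{\lr}\big)_{V'V}.
\]
The same manipulation applied to $\big(\langle\mathcal{K},Y\rangle_{\lr}Y^\intercal,v\big)_{V'V,\lr}$ produces the identical expression, so the two $\Pro_{\mathcal{Y}}$-contributions coincide.

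\textbf{The $\Pro_{\mathcal{Y}}^{\perp}\Pro_{\mathcal{U}}$-contributions.}
Set $a_l:=(\mathcal{K},U_l)_{V'V}\in\lr(\Gamma)$ and $b_k:=\langle v,U_k\rangle_H\in\lr(\Gamma)$, $1\le k,l\le R$. Expanding $\Pro_{\mathcal{U}}[v]=\sum_{k,l}(M^{-1})_{kl}b_kU_l$, pushing the pointwise pairing with $\mathcal{K}$ through the finite sum and through $\Pro_{\mathcal{Y}}^{\perp}$ with the help of (i)--(ii), and then recombining the $Y_j$-terms into $\Pro_{\mathcal{Y}}$, the left-hand contribution becomes
\[
\big(\mathcal{K},\Pro_{\mathcal{Y}}^{\perp}[\Pro_{\mathcal{U}}[v]]\big)_{V'V,\lr}=\sum_{k,l=1}^{R}(M^{-1})_{kl}\,\langle a_l,\Pro_{\mathcal{Y}}^{\perp}[b_k]\rangle_{\lr}.
\]
On the other side, writing $(\mathcal{K},U)_{V'V}M^{-1}U^\intercal=\sum_{k,l}(M^{-1})_{kl}a_kU_l$ and noting that $\Pro_{\mathcal{Y}}^{\perp}$ acts only on the random-variable factor $a_k$, then pairing with $v$ via (ii), the right-hand contribution becomes $\sum_{k,l}(M^{-1})_{kl}\langle\Pro_{\mathcal{Y}}^{\perp}[a_k],b_l\rangle_{\lr}$; relabelling $k\leftrightarrow l$, using (iv) and the self-adjointness (iii) of $\Pro_{\mathcal{Y}}^{\perp}$, this equals $\sum_{k,l}(M^{-1})_{kl}\langle a_l,\Pro_{\mathcal{Y}}^{\perp}[b_k]\rangle_{\lr}$, matching the left-hand contribution. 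Adding the two matched contributions yields \eqref{eq:proj_symm}.

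\textbf{Main obstacle.}
There is no conceptual difficulty; the only care required is bookkeeping of which quantities live in $V$, $V'$, $H$ or $\lr(\Gamma)$, and the repeated justification of interchanging the $V'$--$V$ (or $H$-) pairing with Bochner integrals and with the finite sums defining the projectors. One could instead argue by density --- for $\mathcal{K}\in\lrH$ the extended $\Pi_{UY^\intercal}$ reduces to the $\lrHz$-orthogonal projection, which is self-adjoint for $\langle\cdot,\cdot\rangle_{H,\lr}$, and one extends to $\mathcal{K}\in\lrVdual$ by density --- but this requires first proving that the extended operator is bounded from $\lrVdual$ to $\lrVdual$, so the direct verification above is more self-contained.
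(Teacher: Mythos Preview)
Your proof is correct and follows essentially the same approach as the paper: both split $\Pi_{UY^\intercal}$ into its $\Pro_{\mathcal{Y}}$- and $\Pro_{\mathcal{Y}}^{\perp}\Pro_{\mathcal{U}}$-parts and verify the self-adjointness of each piece separately, using commutation of the $V'$--$V$ pairing with the Bochner integral and the symmetry of $M^{-1}$. The only stylistic difference is that the paper first isolates the identity $(\mathcal{K},\Pro_{\mathcal{Y}}[v])_{V'V,\lr}=(\Pro_{\mathcal{Y}}[\mathcal{K}],v)_{V'V,\lr}$ and then reuses it (for $\Pro_{\mathcal{Y}}^{\perp}=\mathrm{Id}-\Pro_{\mathcal{Y}}$) to transfer $\Pro_{\mathcal{Y}}^{\perp}$ across before rearranging the $\Pro_{\mathcal{U}}$-factor in matrix notation, whereas you expand everything in index form and match coefficients directly; the underlying computations are the same.
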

	\begin{proof} First, we show that 
		\[
		(\mathcal{K}, \,\Pro_{\mathcal{Y}}[v])_{V'V,\lr} = (\Pro_{\mathcal{Y}}[\mathcal{K}], \,v)_{V'V,\lr} \quad \forall v\in \lrV,\;\mathcal{K}\in \lrVdual.
		\]
		Indeed,
		\begin{align*}
			&(\mathcal{K}, \,\Pro_{\mathcal{Y}}[v])_{V'V,\lr} = \int_{\Gamma}\Big(\mathcal{K}, \sum_{i=1}^R \langle v, Y_i\rangle_{\lr} Y_i\Big)_{V'V}\dx\rho\\
			&\qquad= \sum_{i=1}^R \int_{\Gamma} \big(\mathcal{K},\langle v, Y_i\rangle_{\lr}Y_i\big)_{V'V}\dx\rho\\
			&\qquad=\sum_{i=1}^R\int_{\Gamma} \big(\mathcal{K}\,Y_i,\langle v,Y_i\rangle_{\lr}\big)_{V'V}\dx\rho = \sum_{i=1}^R \big(\langle\mathcal{K},Y_i\rangle_{\lr},\langle v,Y_i\rangle_{\lr}\big)_{V'V}\\
			&\qquad= \sum_{i=1}^R \int_{\Gamma} \big(\langle\mathcal{K}, Y_i\rangle_{\lr}Y_i, v\big)_{V'V} \dx\rho = (\Pro_{\mathcal{Y}}[\mathcal{K}], \,v)_{V'V,\lr},
		\end{align*}
		where in the forth step we applied Theorem 8.13 from \cite{Leoni17}.
		
		Now we proceed with proving \eqref{eq:proj_symm}
		\begin{align*}
			(\mathcal{K},\, \Pi_{U Y^\intercal}[v] )_{V'V,\lr} &= (\mathcal{K}, \,\Pro_{\mathcal{Y}}[v] + \Pro_{\mathcal{Y}}^{\perp}[\Pro_{\mathcal{U}}[v]])_{V'V,\lr}\\
			&= (\Pro_{\mathcal{Y}}[\mathcal{K}], \,v)_{V'V,\lr} + (\Pro_{\mathcal{Y}}^{\perp}[\mathcal{K}], \,\Pro_{\mathcal{U}}[v])_{V'V,\lr}\\
			&= \big(\Pro_{\mathcal{Y}}[\mathcal{K}], \,v\big)_{V'V,\lr} + \big(\Pro_{\mathcal{Y}}^{\perp}[\mathcal{K}], \,(v,U)_{H}M^{-1}U^\intercal\big)_{V'V,\lr}\\
			&= \big(\Pro_{\mathcal{Y}}[\mathcal{K}], \,v\big)_{V'V,\lr} + \int_{\Gamma}\big(\Pro_{\mathcal{Y}}^{\perp}[\mathcal{K}],UM^{-1} \big)_{V'V}\big(U^\intercal,v\big)_{H}\dx\rho\\
			&= (\Pro_{\mathcal{Y}}[\mathcal{K}],\, v)_{V'V,\lr} + \big((\Pro_{\mathcal{Y}}^{\perp}[\mathcal{K}],U)_{V'V}M^{-1}U^\intercal ,\,v\big)_{V'V,\lr}\\
			&= ( \Pi_{U Y^\intercal} [\mathcal{K}], v)_{V'V,\lr}.
		\end{align*}
		\qed
	\end{proof}
	We are now in the position to state the first variational formulation of the DLR equations.
	
	\begin{lemma}\label{lemma:DLRproj}
		Let $U, Y$ be the solution of the system \eqref{eq:DLReq2}--\eqref{eq:DLReq3}. Then the zero-mean part of the DLR solution $u^* = UY^\intercal$ satisfies
		\begin{equation}\label{eq:DLRvar0}
			( \dot{u}^* + \Pi_{u^*}[\mathcal{L}^*(u) - f^*],\, v)_{V'V,\lr} = 0,\qquad\forall v\in \lrV.
		\end{equation}
	\end{lemma}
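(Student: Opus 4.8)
The plan is to prove the stronger statement that
\[
\dot{u}^* + \Pi_{u^*}[\mathcal{L}^*(u) - f^*] = 0 \qquad \text{in } \lrVdual,
\]
from which \eqref{eq:DLRvar0} follows at once by pairing with an arbitrary $v\in\lrV$. Write $\mathcal{K} := \mathcal{L}^*(u) - f^* \in \lrVdual$ and recall the extended projection $\Pi_{u^*}[\mathcal{K}] = \Pro_{\mathcal{Y}}[\mathcal{K}] + \Pro_{\mathcal{Y}}^{\perp}\big[(\mathcal{K},U)_{V'V}M^{-1}U^\intercal\big]$ with $\Pro_{\mathcal{Y}}[\mathcal{K}] = \langle\mathcal{K},Y\rangle_{\lr}Y^\intercal = \sum_{i=1}^R \langle\mathcal{K},Y_i\rangle_{\lr}Y_i$. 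Differentiating $u^* = UY^\intercal = \sum_{i=1}^R U_i Y_i$ by the product rule gives $\dot{u}^* = \sum_{i=1}^R \dot{U}_i Y_i + \sum_{i=1}^R U_i \dot{Y}_i$, so it suffices to identify $\sum_i \dot{U}_i Y_i = -\Pro_{\mathcal{Y}}[\mathcal{K}]$ and $\sum_i U_i \dot{Y}_i = -\Pro_{\mathcal{Y}}^{\perp}\big[(\mathcal{K},U)_{V'V}M^{-1}U^\intercal\big]$.

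For the first sum I rewrite the data terms in \eqref{eq:DLReq2}: since $\mathbb{E}[Y_j]=0$ by \eqref{eq:ONcond} and $\mathbb{E}[\mathcal{L}(u)]\in V'$, $\mathbb{E}[f]\in H$ are deterministic, pulling them out of the Bochner integrals yields $\mathbb{E}[\mathcal{L}(u)Y_j] = \langle\mathcal{L}^*(u),Y_j\rangle_{\lr}$ and $\mathbb{E}[fY_j] = \langle f^*,Y_j\rangle_{\lr}$. Using the Gelfand-triple identification $\langle h,v\rangle_H = (h,v)_{V'V}$ for $h\in H$, equation \eqref{eq:DLReq2} becomes $\big(\dot{U}_j + \langle\mathcal{K},Y_j\rangle_{\lr},\, v\big)_{V'V} = 0$ for all $v\in V$; since $V$ separates $V'$ this gives $\dot{U}_j = -\langle\mathcal{K},Y_j\rangle_{\lr}$ in $V'$, hence $\sum_j \dot{U}_j Y_j = -\langle\mathcal{K},Y\rangle_{\lr}Y^\intercal = -\Pro_{\mathcal{Y}}[\mathcal{K}]$. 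For the second sum, \eqref{eq:DLReq3} gives $\dot{Y}_j = -\sum_i (M^{-1})_{ji}\Pro_{\mathcal{Y}}^{\perp}\big[(\mathcal{K},U_i)_{V'V}\big]$; multiplying by $U_j$, summing over $j$, invoking the symmetry $M_{ij}=\langle U_i,U_j\rangle_H=M_{ji}$ (hence of $M^{-1}$), and using that $\Pro_{\mathcal{Y}}^{\perp}$ is linear and commutes with multiplication by the deterministic elements $U_k\in V$, I get $\sum_j U_j\dot{Y}_j = -\Pro_{\mathcal{Y}}^{\perp}\big[\sum_i (\mathcal{K},U_i)_{V'V}(M^{-1}U^\intercal)_i\big] = -\Pro_{\mathcal{Y}}^{\perp}\big[(\mathcal{K},U)_{V'V}M^{-1}U^\intercal\big]$. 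Adding the two contributions yields $\dot{u}^* = -\Pi_{u^*}[\mathcal{K}]$, which is the claim.

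The argument is essentially bookkeeping, and I expect the only delicate points to be: (i) the manipulations with the $\lr(\Gamma)$-, $V$- and $V'$-valued Bochner integrals, i.e.\ commuting $\Pro_{\mathcal{Y}}$, $\Pro_{\mathcal{Y}}^{\perp}$ past the dual pairing and past deterministic factors and pulling constants out of integrals, which rest on the same measurability/integrability facts (e.g.\ \cite[Th.~8.13]{Leoni17}) already used in the proof of Lemma~\ref{lemma:proj_symm}; and (ii) keeping the index placement in $M^{-1}$ consistent, which closes only because $M$ is symmetric. A further minor subtlety is that \eqref{eq:DLReq2} is stated weakly in $V$, but since it holds for every $v\in V$ and $\dot{U}_j\in V'$, the pointwise-in-$V'$ identity $\dot{U}_j = -\langle\mathcal{K},Y_j\rangle_{\lr}$ is legitimate and no regularity beyond that already implicit in Definition~\ref{def:contDLRsol} is required.
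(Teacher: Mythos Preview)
Your proof is correct and follows essentially the same route as the paper: both identify $\sum_j \dot U_j Y_j$ with $-\Pro_{\mathcal{Y}}[\mathcal{K}]$ from \eqref{eq:DLReq2} and $\sum_j U_j\dot Y_j$ with $-\Pro_{\mathcal{Y}}^{\perp}\big[(\mathcal{K},U)_{V'V}M^{-1}U^\intercal\big]$ from \eqref{eq:DLReq3}, then add. The only stylistic difference is that the paper keeps everything in weak form against test functions $v\cdot w$ with $v\in V$, $w\in\lr$ and closes by density of simple functions in $\lrV$, whereas you pass directly to the identity $\dot u^* = -\Pi_{u^*}[\mathcal{K}]$ in $\lrVdual$; this is slightly more direct and spares the density argument, but is otherwise the same computation.
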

	\begin{proof}
		First, we multiply equation \eqref{eq:DLReq2} by $Y_j$ and take its weak formulation in $\lr$. Summing over $j$ results in 
		\begin{equation*}		
			\Big(\dot{U}Y^\intercal + \mathbb{E}\big[\big(\mathcal{L}(u) - f\big)Y\big]Y^\intercal, v\,w\Big)_{V'V,\lr}   = 0 \quad \forall v\in V,\, w\in\lr.
		\end{equation*}
		Notice that $\mathbb{E}\big[\big(\mathcal{L}^*(u) - f^*\big)Y\big] =\mathbb{E}\big[\big(\mathcal{L}(u) - f\big)Y\big]$ since $Y\subset \lrz.$
		Analogously, we multiply \eqref{eq:DLReq3} by $U_j$ and take its weak formulation in $V'$ 
		\begin{multline*} \Big(U_j\dot{Y}_j + \sum_{i=1}^R U_j (M^{-1})_{j,i}\Pro_{\mathcal{Y}}^{\perp}\Big[( \mathcal{L}^*(u)-f^*, U_i)_{V' V} \Big], v\,w\Big)_{V'V,\lr} = 0\\ \forall v\in V,\, w\in\lr.
		\end{multline*}
		Summing over $j$, this leads to \[\Big( U\dot{Y}^\intercal + \Pro_{\mathcal{Y}}^{\perp}\Big[( \mathcal{L}^*(u) - f^*, U)_{V' V}M^{-1}U^\intercal\Big], v\,w\Big)_{V'V,\lr} = 0\quad \forall v\in V,\, w\in\lr.\]
		Summing the derived equations we obtain $$\Big( \frac{\mathrm{d}}{\mathrm{d}t}(UY^{\intercal}) + \Pi_{u^*}[\mathcal{L}^*(u) - f^*],\, z\Big)_{V'V,\lr} = 0\quad \forall z\in \spn\{v\,w: v\in V,\, w\in\lr\}.$$
		In particular, this holds for any $z$ being a Bochner integrable simple function, the collection of which is dense in $\lrV$ (see \cite[Th.  8.15]{Leoni17}).
		\qed
	\end{proof}
	We can finally state the variational formulation corresponding to the DLR equations \eqref{eq:DLReq1}--\eqref{eq:DLReq3}.	
	
	\begin{theorem}[DLR variational formulation]\label{th:varform}
		Let $\bar{u}, U, Y$ be the solution of the system \eqref{eq:DLReq1}--\eqref{eq:DLReq3}. Then the DLR solution $u = \bar{u} + UY^\intercal$ satisfies
		\begin{equation}\label{eq:DLRvar}
			\big( \dot{u} + \mathcal{L}(u),\, v\big)_{V'V,\lr} = \langle f,v\rangle_{H,\lr},\qquad\forall v = \bar{v} + v^*,\,\bar{v}\in V,\, v^*\in\mathcal{T}_{u^*}\mathcal{M}_R.
		\end{equation}
	\end{theorem}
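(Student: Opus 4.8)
The plan is to combine the two equations already derived in the excerpt: equation \eqref{eq:DLReq1} for the mean, and the variational identity \eqref{eq:DLRvar0} from Lemma~\ref{lemma:DLRproj} for the zero-mean part $u^* = UY^\intercal$. Writing $u = \bar u + u^*$ and $v = \bar v + v^*$ with $\bar v \in V$ and $v^* \in \mathcal{T}_{u^*}\mathcal{M}_R$, I would first split the target identity \eqref{eq:DLRvar} by linearity of the bilinear form and of $\mathcal{L}$, and treat the two contributions from testing against $\bar v$ and against $v^*$ separately.

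For the test function $\bar v \in V$: testing $\dot u + \mathcal{L}(u) - f$ against a deterministic $\bar v$ amounts to taking expectations. Since $\mathbb{E}[u^*] = 0$ and $\mathbb{E}[\dot u^*] = 0$ (the DO condition and zero-mean constraint are preserved), we get $\mathbb{E}[\dot u] = \dot{\bar u}$, and $(\dot u + \mathcal{L}(u), \bar v)_{V'V,\lr} = (\dot{\bar u} + \mathbb{E}[\mathcal{L}(u)], \bar v)_{V'V}$, while $\langle f, \bar v\rangle_{H,\lr} = \langle \mathbb{E}[f], \bar v\rangle_H$. So this piece is exactly equation \eqref{eq:DLReq1}. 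One should also note the ``cross terms'' vanish: testing the mean part $\dot{\bar u} + \mathcal{L}(\bar u)$ — wait, more carefully, $\mathcal{L}(u)$ does not split as $\mathcal{L}(\bar u) + \mathcal{L}(u^*)$ nicely under expectation unless one is careful; but actually by linearity of $\mathcal{L}$ it does, $\mathcal{L}(u) = \mathcal{L}(\bar u) + \mathcal{L}(u^*)$, and $\mathbb{E}[\mathcal{L}(u)]$ is what appears, so no issue. The key point is just that testing against $\bar v$ picks out the mean equation.

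For the test function $v^* \in \mathcal{T}_{u^*}\mathcal{M}_R$: here I would use Lemma~\ref{lemma:DLRproj}, which gives $(\dot u^* + \Pi_{u^*}[\mathcal{L}^*(u) - f^*], v)_{V'V,\lr} = 0$ for \emph{all} $v \in \lrV$, hence in particular for $v = v^* \in \mathcal{T}_{u^*}\mathcal{M}_R$. The crucial step is to remove the projection $\Pi_{u^*}$: since $v^*$ lies in the tangent space, Lemma~\ref{lemma:proj_symm} gives $(\Pi_{u^*}[\mathcal{L}^*(u) - f^*], v^*)_{V'V,\lr} = (\mathcal{L}^*(u) - f^*, \Pi_{u^*}[v^*])_{V'V,\lr} = (\mathcal{L}^*(u) - f^*, v^*)_{V'V,\lr}$, using that $\Pi_{u^*}$ acts as the identity on its tangent space (this needs a one-line check from the explicit form \eqref{eq:DLRproj} and the characterization \eqref{eq:DLRtang}, noting the $\lrHz$-projection fixes tangent vectors). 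Then I would replace the starred quantities by the full ones: $(\mathcal{L}^*(u), v^*)_{V'V,\lr} = (\mathcal{L}(u), v^*)_{V'V,\lr}$ because $v^* \in \lrz$ and subtracting a deterministic function $\mathbb{E}[\mathcal{L}(u)]$ integrates against $v^*$ to zero; similarly $(f^*, v^*)_{H,\lr} = \langle f, v^*\rangle_{H,\lr}$. Likewise $(\dot u^*, v^*)_{V'V,\lr} = (\dot u, v^*)_{V'V,\lr}$ since $\dot{\bar u}$ paired with the zero-mean $v^*$ vanishes. This yields $(\dot u + \mathcal{L}(u), v^*)_{V'V,\lr} = \langle f, v^*\rangle_{H,\lr}$.

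Adding the two pieces gives \eqref{eq:DLRvar}. The main obstacle — really the only non-bookkeeping point — is justifying that $\Pi_{u^*}$ restricted to $\mathcal{T}_{u^*}\mathcal{M}_R$ is the identity and that Lemma~\ref{lemma:proj_symm} may be applied with $\mathcal{K} = \mathcal{L}^*(u) - f^* \in \lrVdual$ and $v = v^* \in \lrV$; everything else is linearity and the repeated use of ``pairing a deterministic function with a zero-mean random function gives zero.'' I would also remark that one must check $\dot u^* \in \lrVdual$ so that the pairings make sense, which follows from \eqref{eq:DLReq2}--\eqref{eq:DLReq3} and the regularity assumptions on the modes.
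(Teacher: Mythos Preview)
Your proposal is correct and follows essentially the same approach as the paper's proof: both combine Lemma~\ref{lemma:DLRproj} with Lemma~\ref{lemma:proj_symm} to drop the projector $\Pi_{u^*}$ on tangent vectors, and both rely on the ``zero-mean pairs with deterministic gives zero'' observation to handle the cross terms between the mean equation \eqref{eq:DLReq1} and the zero-mean part. The only cosmetic difference is that the paper first extends each of the two identities to test functions of the form $w+v^*$ and then sums, whereas you test separately against $\bar v$ and $v^*$ and add; the content is identical.
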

	\begin{proof}
		Based on Lemma~\ref{lemma:DLRproj} and Lemma~\ref{lemma:proj_symm} we can write 
		\begin{multline*} 			
			\big( \dot{u}^*,\,v\big)_{V'V,\lr} + \big(\Pi_{u^*}[\mathcal{L}^*(u) - f^*],\, v\big)_{V'V,\lr}\\
			=\big( \dot{u}^*,\,v\big)_{V'V,\lr} + \big(\mathcal{L}^*(u) - f^*,\, \Pi_{u^*}[v]\big)_{V'V,\lr} = 0,\qquad\forall v\in \lr(\Gamma;V).
		\end{multline*}
		Since $\Pi_{u^*}[v] = v,\;\forall v\in \mathcal{T}_{u^*}\mathcal{M}_R$, this results in $$\big( \dot{u}^* + \mathcal{L}^*(u) - f^*,\, v\big)_{V'V,\lr} = 0,\qquad\forall v\in \mathcal{T}_{u^*}\mathcal{M}_R,$$ which can be equivalently written as 
		\begin{equation}\label{eq:tmpeq1}		
			( \dot{u}^* + \mathcal{L}^*(u) - f^*,\, w + v\big)_{V'V,\lr} = 0,\qquad\forall w\in V,\, \forall v \in\mathcal{T}_{u^*}\mathcal{M}_R,
		\end{equation}
		exploiting the fact that $\big( \dot{u}^* + \mathcal{L}^*(u) - f^*,\, w \big)_{V'V,\lr} = 0,\;\forall w\in V$. Likewise, equation \eqref{eq:DLReq1} can be equivalently written as 
		\begin{equation}\label{eq:tmpeq1.2}
			\big( \dot{\bar{u}} + \mathbb{E}[\mathcal{L}(u) - f],\, w + v\big)_{V'V,\lr} = 0,\qquad \forall w\in V,\, \forall v \in\mathcal{T}_{u^*}\mathcal{M}_R,
		\end{equation}
		exploiting the fact that $\big( \dot{\bar{u}} + \mathbb{E}[\mathcal{L}(u) - f],\, v\big)_{V'V,\lr} = 0$ as $\mathbb{E}[v] = 0 \; \forall v \in\mathcal{T}_{u^*}\mathcal{M}_R$.
		Summing \eqref{eq:tmpeq1} and \eqref{eq:tmpeq1.2} leads to the sought equation \eqref{eq:DLRvar}.
		\qed
	\end{proof}
	
	Recently, the existence and uniqueness of the dynamical low rank approximation for a class of random semi-linear evolutionary equations was established in \cite{Kazashi20} and for linear parabolic equations in two space dimensions with a symmetric operator $\mathcal{L}$ in \cite{Bachmayr20}.
	
	\section{Discretization of DLR equations}\label{sec:disc}
	In this section we describe the discretization of the DLR equations  that we consider in this work. In particular, we focus on the time discretization of \eqref{eq:DLReq1}--\eqref{eq:DLReq3} and propose a staggered time marching scheme that decouples the update of the spatial and stochastic modes. Afterwards, we will show that the proposed scheme can be formulated as a projector-splitting scheme for the Dual DO formulation and comment on its connection to the projector-splitting scheme from \cite{Lubich14}. As a last result we state and prove a variational formulation of the discretized problem.
	
	\paragraph{Stochastic discretization}
	
	We consider a discrete measure given by $\{\omega_k,\lambda_k\}_{k=1}^{\hat{N}}$, i.e.\ a set of sample points $\{\omega_k\}_{k=1}^{\hat{N}}\subset\Gamma$ with $R < \hat{N} < \infty$ and a set of positive weights $\{\lambda_k\}_{k=1}^{\hat{N}},\; \lambda_k>0,$ $\sum_{k=1}^{\hat{N}} \lambda_k=1$, which approximates the probability measure $\rho$ \[\hat{\rho} \coloneqq \sum_{k=1}^{\hat{N}} \lambda_k \delta_{\omega_k}\approx \rho.\]
	The discrete probability space $(\hat{\Gamma} = \{\omega_k\}_{k=1}^{\hat{N}}, 2^{\hat{\Gamma}}, \hat{\rho})$ will replace the original one $(\Gamma,\mathcal{F}, \rho)$ in the discretization of the DLR equations. Notice, in particular, that a random variable $Z:\hat{\Gamma}\mapsto\mathbb{R}$ measurable on $(\hat{\Gamma}, 2^{\hat{\Gamma}}, \hat{\rho})$ can be represented as a vector $z\in\mathbb{R}^{\hat{N}}$ with $z_k = Z(\omega_k), \, k=1,\dots,\hat{N}$. The sample points $\{\omega_k\}_{k=1}^{\hat{N}}$ can be taken as iid samples from $\rho$ (e.g.\ Monte Carlo samples) or chosen deterministically (e.g.\ deterministic quadrature points with positive quadrature weights). The mean value of a random variable  $Z$ with respect to the measure $\hat{\rho}$ is computed as $$\mathbb{E}_{\hat{\rho}}[Z] = \sum_{k=1}^{\hat{N}} Z(\omega_k)\lambda_k.$$
	We introduce also the semi-discrete scalar products $\langle\cdot,\cdot\rangle_{\star,\lhr}$ with $\star = V,H$ and their corresponding induced norms $\|\cdot\|_{\star,\lhr}$. Note that the semi-discrete bilinear form $\langle \cdot, \cdot\rangle_{\mathcal{L},\hat{\rho}}$ defined as \[\langle v, w\rangle_{\mathcal{L},\hat{\rho}} = \sum_{k=1}^{\hat{N}} \mathit{L}(\omega_k)v(\omega_k) w(\omega_k)\lambda_k\] is coercive and bounded, with the same coercivity and continuity constants $\Ccoerc,\,\Cbound$, defined in \eqref{eq:coerc_const}, \eqref{eq:bound_const}, respectively.
	
	\paragraph{Space discretization}
	
	We consider a general finite-dimensional subspace $V_h\subset V$ whose dimension is larger than $R$ and is determined by the discretization parameter $h$. Eventually, we will perform a Galerkin projection of the DLR equations onto the subspace $V_h$. We further assume that an inverse inequality of the type	
	\begin{equation}\label{eq:discDLRinv}
		\| v\|_{V,\lhr}\leq \frac{\Cinv}{h^p}\|v\|_{H,\lhr},\qquad \forall v\in V_h\otimes\lhr
	\end{equation}
	holds for some $p\in\N$ and $\Cinv > 0$.
	
	\paragraph{Time discretization}
	
	For the time discretization we divide the time interval into $N$ equally spaced  subintervals $0=t_0<t_1<\dots<t_N = T$ and denote the time step by  $\triangle t := t_{n+1} - t_n$. Note that the DLR solution $u = \bar{u} + UY^\intercal$ appears in the right hand side of the system of equations \eqref{eq:DLReq1}--\eqref{eq:DLReq3} both in the operator $\mathcal{L}$ and in the projector operator onto the tangent space to the manifold. We will treat these two terms differently. Concerning the projection operator, we adopt a staggered strategy, where, given the approximate solution $u^n = \bar{u}^n + U^n Y^{n^\intercal}$, we first update the mean $\bar{u}^{n+1}$, then we update the deterministic basis $U^{n+1}$ projecting on the subspace $\spn\{Y^n\};$ finally, we update the stochastic basis $Y^{n+1}$ projecting on the orthogonal complement of $\spn\{Y^n\}$ and on the updated subspace $\spn\{U^{n+1}\}$. This staggered strategy resembles the projection splitting operator proposed in \cite{Lubich14}. We will show later in Section~\ref{sec:projsplit} that it does actually coincide with the algorithm in \cite{Lubich14}. Concerning the operator $\mathcal{L}$, we will discuss hereafter different discretization choices leading to explicit, semi-implicit or fully implicit algorithms.
	
	\subsection{Fully discrete problem}\label{sec:fullydiscpr}
	
	We give in the next algorithm the general form of the discretization schemes that we consider in this work.
	
	\begin{algorithm}\label{alg:ourscheme}
		Given the approximated solution $u^n_{h,\hat{\rho}} = \bar{u}^n + \sum_{j=1}^R U_j^n Y_j^n$ at time $t_n$ with 
		\begin{gather*}
			\bar{u}^n, U_j^n \in V_h, \quad Y_i^n\in\lhr,\\ \langle Y_i^n, Y_j^n\rangle_{\lhr} = \delta_{ij}, \quad \mathbb{E}\hr[Y_j^n] = 0,\quad\forall\, \eR{i,j}:
		\end{gather*}
		\begin{enumerate}
			\item Compute the mean value $\bar{u}^{n+1}$ such that 
			\begin{multline} \label{eq:discDLReq1}
				\Big\langle \frac{\bar{u}^{n+1} - \bar{u}^n}{\triangle t},v_h \Big\rangle_{H} + \Big(\mathbb{E}\hr[\mathcal{L}(u^n_{h,\hat{\rho}}, u^{n+1}_{h,\hat{\rho}}) - f^{n,n+1}], v_h \Big)_{V' V} = 0\\ \forall v_h \in V_h.
			\end{multline}
			\item Compute the deterministic basis $\tilde{U}_j^{n+1}$ for $j = 1,\dots,R$
			\begin{multline}\label{eq:discDLReq2}
				\Big\langle \frac{\tilde{U}^{n+1}_j - U^n_j}{\triangle t}, v_h\Big\rangle_H + \Big(\mathbb{E}\hr[(\mathcal{L}(u^n_{h,\hat{\rho}}, u^{n+1}_{h,\hat{\rho}})-f^{n,n+1}) Y^n_j], v_h \Big)_{V' V} = 0\\ \forall v_h \in V_h.
			\end{multline}
			\item Compute the stochastic basis $\{\tilde{Y}_j^{n+1}\}_{j=1}^R$ such that
			\begin{equation}\label{eq:discDLReq3}
				\frac{\tilde{Y}^{n+1} - Y^n}{\triangle t} \tilde{M}^{n+1} + \Pro_{\hat{\rho},\mathcal{Y}^n}^{\perp} \Big[ \big( \mathcal{L}^{*}(u^n_{h,\hat{\rho}}, u^{n+1}_{h,\hat{\rho}}) - f^{{n,n+1}^*},\tilde{U}^{n+1} \big)_{V'V}\Big] = 0.
			\end{equation}
			where $\tilde{M}^{n+1} = \langle\tilde{U}^{{n+1}^\intercal}, \tilde{U}^{n+1}\rangle_H$, $\Pro_{\hat{\rho},\mathcal{Y}^n}^{\perp}[\cdot]$ is the analogue of the projector defined in \eqref{eq:Yproj} but in the discrete space $\lhr$.
			\item Reorthonormalize the stochastic basis: find $(U^{n+1}, Y^{n+1})$ s.t.
			\begin{equation}\label{eq:discDLReq4}
				\sum_{j=1}^R Y^{n+1}_j U^{n+1}_j = \sum_{j=1}^R \tilde{Y}^{n+1}_j \tilde{U}^{n+1}_j, \qquad \langle Y^{{n+1}^\intercal}, Y^{n+1}\rangle_{\lhr} = \mathrm{Id}.
			\end{equation}
			\item Form the approximated solution at time step $t_{n+1}$ as 
			\begin{equation}\label{eq:discDLReq5}
				u^{n+1}_{h,\hat{\rho}} = \bar{u}^{n+1} + \sum_{j=1}^R U_j^{n+1}Y_j^{n+1}.
			\end{equation}
		\end{enumerate}
		The expressions $\mathcal{L}(u^n_{h,\hat{\rho}}, u^{n+1}_{h,\hat{\rho}})$ and $f^{n,n+1}$ stand for an unspecified time integration of the operator $\mathcal{L}(u(t))$ and right hand side $f(t),\; t\in[t_n, t_{n+1}]$ and $v^*$ denotes the 0-mean part of a random variable $v\in\lhr$ with respect to the discrete measure $\hat{\rho}$, i.e.\ $v^* = v - \mathbb{E}_{\hat{\rho}}[v].$
	\end{algorithm}

	The newly computed solution $u^{n+1}_{h,\hat{\rho}}$ belongs to the tensor product space $V_h\otimes\lhr$, since we have $\bar{u}^{n+1}, U^{n+1}_j\in V_h$ and $ Y_j\in\lhr$, $\eR{j}$.
	Note that equation \eqref{eq:discDLReq3} is set in $\lhr$. 
	Since $\lhr$ is a finite dimensional vector space isomorphic to $\R^{\hat{N}}$, equation \eqref{eq:discDLReq3} can be rewritten as a deterministic linear system of $R\times \hat{N}$ equations with $R\times \hat{N}$ unknowns. This system can be decoupled into a linear system of size $R\times R$ for each collocation point. If the deterministic modes $\tilde{U}^{n+1}$ are linearly independent, the system matrix is invertible.
	Otherwise we interpret \eqref{eq:discDLReq3} in a minimal-norm least squares sense, choosing a solution $\tilde{Y}^{n+1}$, if it exists, that minimizes the norm $\|\tilde{Y}^{n+1} - Y^n\|_{\lhr}$. This is discussed in more details in Section \ref{sec:discvar_rankdef}.
	
	The following lemma shows that the scheme \eqref{eq:discDLReq1}--\eqref{eq:discDLReq3} satisfies some important properties that will be essential in the stability analysis presented in Section \ref{sec:stabest}.
	
	\begin{lemma}[Discretization properties]\label{lemma:discprop}
		\sloppy{Assuming that a solution $(\tilde{Y}^{n+1}, \tilde{U}^{n+1},\bar{u}^{n+1})$ exists, the following properties hold for the discretization} \eqref{eq:discDLReq1}--\eqref{eq:discDLReq3}:
		\begin{enumerate}
			\item Discrete DO condition: 
			\begin{equation}\label{eq:discDO1}
				\big\langle\frac{\tilde{Y}_i^{n+1} - Y_i^n}{\triangle t}, Y_j^n\big\rangle_{\lhr} = 0,\quad\forall\eR{i,j} 
			\end{equation} 
			\item $\mathbb{E}\hr[\tilde{Y}^{n+1}] = 0$
			\item $\langle \tilde{Y}^{{n+1}^\intercal}, Y^n\rangle_{\lhr} = \mathrm{Id}$
		\end{enumerate}
	\end{lemma}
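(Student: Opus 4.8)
The three claims are all direct consequences of the specific structure of the update equations \eqref{eq:discDLReq2}--\eqref{eq:discDLReq3}, so the plan is to establish them in the order (3) $\Rightarrow$ (1) $\Rightarrow$ (2), since (1) follows immediately from (3) once we subtract the identity, and (2) uses a similar testing argument.

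\emph{Claim (3).} The idea is to test \eqref{eq:discDLReq3} against $Y^n$ in the $\lhr$ inner product. Recall that $\Pro_{\hat\rho,\mathcal{Y}^n}^{\perp}$ is the $\lhr$-orthogonal projection onto the orthogonal complement of $\mathcal{Y}^n = \spn\{Y_1^n,\dots,Y_R^n\}$; hence for any argument, $\langle \Pro_{\hat\rho,\mathcal{Y}^n}^{\perp}[\,\cdot\,], Y_j^n\rangle_{\lhr} = 0$ for every $j$, because $Y_j^n\in\mathcal{Y}^n$. Therefore pairing \eqref{eq:discDLReq3} with $Y^n$ kills the projection term entirely, and what remains is
\[
\frac{\langle \tilde{Y}^{{n+1}^\intercal}, Y^n\rangle_{\lhr} - \langle Y^{n^\intercal}, Y^n\rangle_{\lhr}}{\triangle t}\,\tilde{M}^{n+1} = 0.
\]
Since the stochastic basis $Y^n$ is orthonormal in $\lhr$ by hypothesis, $\langle Y^{n^\intercal}, Y^n\rangle_{\lhr} = \mathrm{Id}$. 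The only subtlety is that $\tilde M^{n+1}$ may be singular when $\tilde U^{n+1}$ is rank-deficient; in the full-rank case $\tilde M^{n+1}$ is invertible and we may cancel it to obtain $\langle \tilde{Y}^{{n+1}^\intercal}, Y^n\rangle_{\lhr} = \mathrm{Id}$. In the rank-deficient case one invokes the minimal-norm least-squares interpretation of \eqref{eq:discDLReq3} announced just before the lemma (and elaborated in Section~\ref{sec:discvar_rankdef}): the minimal-norm solution has increments lying in the row space of $\tilde M^{n+1}$, which together with the displayed identity still forces $\langle \tilde{Y}^{{n+1}^\intercal}, Y^n\rangle_{\lhr} = \mathrm{Id}$ on that subspace, the complementary components being zero. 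This case distinction is the one place requiring care; everything else is bookkeeping.

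\emph{Claim (1).} Subtract $\mathrm{Id}$ from both sides of (3) and divide by $\triangle t$:
\[
\Big\langle \frac{\tilde{Y}_i^{n+1} - Y_i^n}{\triangle t}, Y_j^n\Big\rangle_{\lhr} = \frac{1}{\triangle t}\big(\langle \tilde{Y}_i^{n+1}, Y_j^n\rangle_{\lhr} - \delta_{ij}\big) = \frac{1}{\triangle t}(\delta_{ij} - \delta_{ij}) = 0,
\]
using $\langle Y_i^n, Y_j^n\rangle_{\lhr} = \delta_{ij}$. This is the discrete DO condition.

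\emph{Claim (2).} Test \eqref{eq:discDLReq3} against the constant random variable $\mathbf{1}\in\lhr$ (equivalently, apply $\mathbb{E}_{\hat\rho}[\,\cdot\,]$). Since $\mathbf 1 = \sum_k \langle \mathbf 1, Y_j^n\rangle \cdots$ need not lie in $\mathcal{Y}^n$, I instead use directly that $\mathbb{E}_{\hat\rho}[\Pro_{\hat\rho,\mathcal{Y}^n}^{\perp}[v]] = \mathbb{E}_{\hat\rho}[v] - \sum_j \langle v, Y_j^n\rangle_{\lhr}\,\mathbb{E}_{\hat\rho}[Y_j^n] = \mathbb{E}_{\hat\rho}[v]$, because $\mathbb{E}_{\hat\rho}[Y_j^n]=0$ by hypothesis. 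Moreover the argument of the projector in \eqref{eq:discDLReq3} is built from $f^{{n,n+1}^*}$ and $\mathcal{L}^*$, both of which have zero $\hat\rho$-mean by construction of the starred quantities, so $\mathbb{E}_{\hat\rho}$ of the projector term vanishes. Applying $\mathbb{E}_{\hat\rho}$ to the whole of \eqref{eq:discDLReq3} then gives $\frac{1}{\triangle t}(\mathbb{E}_{\hat\rho}[\tilde Y^{n+1}] - \mathbb{E}_{\hat\rho}[Y^n])\,\tilde M^{n+1} = 0$, and since $\mathbb{E}_{\hat\rho}[Y^n]=0$ and $\tilde M^{n+1}$ is invertible (resp., using the least-squares structure in the rank-deficient case as above), we conclude $\mathbb{E}_{\hat\rho}[\tilde Y^{n+1}] = 0$. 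I expect the rank-deficiency bookkeeping to be the only genuine obstacle; the rest is a matter of pairing the equation against the right test functions.
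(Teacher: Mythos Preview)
Your proposal is correct and follows essentially the same approach as the paper. The only difference is ordering: the paper proves (1) first by pairing \eqref{eq:discDLReq3} with $Y^n$ and cancelling $\tilde{M}^{n+1}$, then obtains (3) immediately from (1) together with $\langle Y^{n^\intercal},Y^n\rangle_{\lhr}=\mathrm{Id}$; you prove (3) by the same computation and then deduce (1), which is the same argument read in reverse. For (2) the paper simply records the two zero-mean facts you use. The paper also explicitly restricts this proof to the full-rank case and defers rank deficiency to Lemma~\ref{lemma:discdo}, whereas you sketch the rank-deficient argument inline; your sketch is a bit imprecise in wording (``$=\mathrm{Id}$ on that subspace'') but the underlying idea---that the columns of $\langle(\tilde{Y}^{n+1}-Y^n)^\intercal,Y^n\rangle_{\lhr}$ lie simultaneously in $\kernel(\tilde{M}^{n+1})$ by the displayed identity and in $\kernel(\tilde{M}^{n+1})^\perp$ by the minimal-norm choice---is exactly what Lemma~\ref{lemma:discdo} makes rigorous.
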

	\begin{proof}\mbox{\\}
		\begin{enumerate}
			\item In the following proof we assume that the matrix $\tilde{M}^{n+1} = \langle\tilde{U}^{{n+1}^\intercal}, \tilde{U}^{n+1}\rangle_H$ is full rank. For the rank-deficient case, we refer the reader to the proof of Lemma \ref{lemma:discdo}. Let us multiply equation \eqref{eq:discDLReq3} by $Y^{n^\intercal}$ from the left and take the $\lhr$-scalar product. Since the second term involves $\Pro_{\hat{\rho},\mathcal{Y}^n}^\perp$, the scalar product of $Y^n$ with the second term vanishes which, under the assumption that $\tilde{M}^{n+1}$ is full rank, gives us the discrete DO condition $$\Big\langle  Y^{n^\intercal}, \frac{\tilde{Y}^{n+1} - Y^n}{\triangle t}\Big\rangle_{\lhr} = 0.$$
			\item This is a consequence of the fact that we have  $\mathbb{E}\hr[Y^n] = 0$ \sloppy{and $\mathbb{E}\hr\big[ \big( \mathcal{L}^*(u^n, u^{n+1})-f^{{n,n+1}^*},\tilde{U}^{n+1} \big)_{V'V}\big] = 0.$}
			\item This is immediate from the discrete DO property and $\langle Y^{n^\intercal}, Y^n\rangle_{\lhr} = \text{Id}$.
		\end{enumerate}
		\qed
	\end{proof}
	To complete the discretization scheme \eqref{eq:discDLReq1}--\eqref{eq:discDLReq3} we need to specify the terms $\mathcal{L}(u^n_{h,\hat{\rho}}, u^{n+1}_{h,\hat{\rho}})$ and $f^{n,n+1}$. 
	The DLR system stated in \eqref{eq:DLReq1}--\eqref{eq:DLReq3} is coupled. 
	Therefore, an important feature we would like to attain is to decouple the equations for the mean value, the deterministic and the stochastic modes as much as possible. We describe hereafter 3 strategies for the discretization of the operator evaluation term $\mathcal{L}(u^n_{h,\hat{\rho}}, u^{n+1}_{h,\hat{\rho}})$, and the right hand side $f^{n, n+1}$.
	
	\paragraph{Explicit Euler scheme}
	The explicit Euler scheme performs the discretization $$\mathcal{L}(u^n_{h,\hat{\rho}}, u^{n+1}_{h,\hat{\rho}}) = \mathcal{L}(u^n_{h,\hat{\rho}}),\qquad f^{n,n+1} = f(t_{n}).$$ 
	It decouples the system \eqref{eq:discDLReq1}--\eqref{eq:discDLReq3} since, for the computation of the new modes, we require only the knowledge of the already-computed modes. The equations for the stochastic modes $\{\tilde{Y}^{n+1}_j\}_{j=1}^R$ are coupled together through the matrix $\tilde{M}^{n+1} = \langle \tilde{U}^{{n+1}^\intercal}, \tilde{U}^{n+1}\rangle_{H} \in \R^{R\times R}$ but are otherwise decoupled between collocation points (i.e. $\hat{N}$ linear systems of size $R$ have to be solved). 
	
	\paragraph{Implicit Euler scheme}
	The implicit Euler scheme performs the discretization $$\mathcal{L}(u^n_{h,\hat{\rho}}, u^{n+1}_{h,\hat{\rho}}) = \mathcal{L}(u^{n+1}_{h,\hat{\rho}}),\qquad f^{n,n+1} = f(t_{n+1}).$$ This method couples the system \eqref{eq:discDLReq1}--\eqref{eq:discDLReq3} in a non-trivial way, which is why we do not focus on this method in our numerical results. We mention it in the stability estimates section (Section \ref{sec:stabestDLR}) for its interesting stability properties.
	
	\paragraph{Semi-implicit scheme}
	Assume that our operator $\mathcal{L}$ can be decomposed into two parts 
	\begin{equation*} 
		\mathcal{L}(u) = \Ldet(u) + \Lstoch(u),
	\end{equation*}
	where $\Ldet: V\to V'$ is a linear deterministic operator such that it induces a bounded and coercive bilinear form $\langle \cdot,\cdot\rangle_{\Ldet}$ on $V$ 
	\begin{equation}\label{eq:Ldet}
		\langle u,v\rangle_{\Ldet} := (\Ldet(u), v)_{V'V},\qquad u,v\in V
	\end{equation}
	and that its action on a function $v = v_1 v_2$ with $v_1\in V,\, v_2\in\lr$ is defined as \[\Ldet(v) = \Ldet(v_1) v_2.\] 
	Then, $\Ldet$ is also a linear operator $\Ldet:\lrV\to \lrVdual$ (as well as $\Ldet:\, \lhr(\hat{\Gamma};V)\mapsto \lhr(\hat{\Gamma}; V')$) and induces a bounded coercive bilinear form on $\lrV$ $$\langle u,v\rangle_{\Ldet,\rho} = \intG{(\Ldet(u), v)_{V'V}}.$$
	We propose a semi-implicit time integration of the operator evaluation term
	\begin{equation}\label{eq:discop_decomp}
		\mathcal{L}(u^n_{h,\hat{\rho}}, u^{n+1}_{h,\hat{\rho}}) = \Ldet(u^{n+1}_{h,\hat{\rho}}) + \Lstoch(u^n_{h,\hat{\rho}})
	\end{equation}
	whereas for $f^{n,n+1}$ we can either take $f^{n,n+1} = f(t_{n+1})$ or $f^{n,n+1} = f(t_n)$ or any convex combination of both. The resulting scheme is detailed in the next lemma.
	
	\begin{lemma}\label{lemma:semiimplscheme}
		The semi-implicit integration scheme \eqref{eq:discop_decomp} combined with the general steps \eqref{eq:discDLReq1}--\eqref{eq:discDLReq3} is equivalent to the following set of equations
		\begin{align}
			&\langle \bar{u}^{n+1}, v_h\rangle_{H} + \triangle t\langle \bar{u}^{n+1}, v_h\rangle_{\Ldet} \nonumber\\
			&\hspace{0.6cm}= \langle \bar{u}^{n}, v_h\rangle_{H} - \triangle t ( \mathbb{E}_{\hat{\rho}}[\Lstoch(u^n_{h,\hat{\rho}}) - f^{n,n+1}], v_h)_{V'V}\hspace{1.3cm}\forall v_h\in V_h\label{eq:semieq1}\\[12pt]
			&\langle \tilde{U}^{n+1}_j, v_h\rangle_{H} + \triangle t\langle \tilde{U}^{n+1}_j, v_h\rangle_{\Ldet} \nonumber\\
			&\hspace{0.6cm}=  \langle \tilde{U}^{n}_j, v_h\rangle_{H} - \triangle t ( \mathbb{E}_{\hat{\rho}}[(\Lstoch(u^n_{h,\hat{\rho}})-f^{n,n+1})Y^n_j], v_h)_{V'V}\hspace{0.5cm}\forall v_h\in V_h\label{eq:semieq2}\\[12pt]
			&\Big(\tilde{Y}^{n+1} - Y^n\Big)\big( \tilde{M}^{n+1} + \triangle t\langle\tilde{U}^{{n+1}^\intercal}, \tilde{U}^{n+1}\rangle_{\Ldet} \big) \nonumber\\[3pt]
			&\hspace{0.6cm}= - \triangle t \Pro_{\hat{\rho},\mathcal{Y}^n}^{\perp} [(\Lstoch^{*}(u^n_{h,\hat{\rho}})-f^{{n,n+1}^{*}}, \tilde{U}^{n+1})_{V'V}]\hspace{2.2cm}\text{in }\lhr.\label{eq:semieq3}
		\end{align}
	\end{lemma}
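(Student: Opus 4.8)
The claim is that the semi-implicit scheme (substituting \eqref{eq:discop_decomp} into the general steps \eqref{eq:discDLReq1}--\eqref{eq:discDLReq3}) is equivalent to the explicit set \eqref{eq:semieq1}--\eqref{eq:semieq3}. The plan is to treat each of the three steps separately and simply substitute the decomposition $\mathcal{L}(u^n_{h,\hat{\rho}}, u^{n+1}_{h,\hat{\rho}}) = \Ldet(u^{n+1}_{h,\hat{\rho}}) + \Lstoch(u^n_{h,\hat{\rho}})$, moving the implicit $\Ldet$-term to the left-hand side and the explicit $\Lstoch$-term together with $f^{n,n+1}$ to the right, then rewriting the $\Ldet$-contributions in terms of the bilinear form $\langle\cdot,\cdot\rangle_{\Ldet}$ using \eqref{eq:Ldet}.

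For \eqref{eq:semieq1}: start from \eqref{eq:discDLReq1}, multiply through by $\triangle t$, and split $\mathbb{E}\hr[\mathcal{L}(u^n_{h,\hat{\rho}}, u^{n+1}_{h,\hat{\rho}})]$. Since $\Ldet$ is deterministic and linear and acts on products as $\Ldet(v_1v_2)=\Ldet(v_1)v_2$, one has $\mathbb{E}\hr[\Ldet(u^{n+1}_{h,\hat{\rho}})] = \Ldet(\bar u^{n+1})$ — here one uses that $\mathbb{E}\hr$ commutes with the (deterministic) operator $\Ldet$ and that $\mathbb{E}\hr[u^{n+1}_{h,\hat{\rho}}] = \bar u^{n+1}$, which holds because $\mathbb{E}\hr[Y_j^{n+1}]=0$ (the reorthonormalization \eqref{eq:discDLReq4} preserves zero mean; this is Lemma~\ref{lemma:discprop}(2) carried through). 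Then $(\Ldet(\bar u^{n+1}), v_h)_{V'V} = \langle \bar u^{n+1}, v_h\rangle_{\Ldet}$, giving the term $\triangle t\langle \bar u^{n+1},v_h\rangle_{\Ldet}$ on the left; the remaining stochastic part stays on the right as $-\triangle t(\mathbb{E}\hr[\Lstoch(u^n_{h,\hat{\rho}}) - f^{n,n+1}],v_h)_{V'V}$.

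For \eqref{eq:semieq2}: the same manipulation applied to \eqref{eq:discDLReq2}, now using $\mathbb{E}\hr[\Ldet(u^{n+1}_{h,\hat{\rho}})Y_j^n] = \Ldet\big(\mathbb{E}\hr[u^{n+1}_{h,\hat{\rho}}Y_j^n]\big)$. The key computation is $\mathbb{E}\hr[u^{n+1}_{h,\hat{\rho}}Y_j^n] = \tilde U_j^{n+1}$, which follows from $\sum_i Y_i^{n+1}U_i^{n+1} = \sum_i \tilde Y_i^{n+1}\tilde U_i^{n+1}$ together with the identity $\langle \tilde Y^{{n+1}^\intercal}, Y^n\rangle_{\lhr} = \mathrm{Id}$ of Lemma~\ref{lemma:discprop}(3) — wait, this requires care: one actually needs $\langle Y^{{n+1}^\intercal},Y^n\rangle$, not $\langle\tilde Y^{{n+1}^\intercal},Y^n\rangle$. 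The cleaner route is to not reorthonormalize at this stage: in step 2 the unknown is $\tilde U^{n+1}$ and the natural product is with the already-fixed basis $Y^n$, and the implicit term $\Ldet(u^{n+1})$ appearing in \eqref{eq:discDLReq2} should be read with $u^{n+1}$ expanded via the $\tilde U,\tilde Y$ representation, so $\mathbb{E}\hr[\Ldet(u^{n+1}_{h,\hat{\rho}})Y_j^n]$ involves $\langle\tilde Y_i^{n+1},Y_j^n\rangle_{\lhr}$, which is $\delta_{ij}$ by Lemma~\ref{lemma:discprop}(3). Hence $\mathbb{E}\hr[\Ldet(u^{n+1})Y_j^n] = \sum_i \Ldet(\tilde U_i^{n+1})\langle\tilde Y_i^{n+1},Y_j^n\rangle_{\lhr} + \Ldet(\bar u^{n+1})\mathbb{E}\hr[Y_j^n] = \Ldet(\tilde U_j^{n+1})$, producing $\triangle t\langle\tilde U_j^{n+1},v_h\rangle_{\Ldet}$ on the left.

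For \eqref{eq:semieq3}: substitute the decomposition into \eqref{eq:discDLReq3}. The term $\big(\Ldet^*(u^{n+1}_{h,\hat{\rho}}), \tilde U^{n+1}\big)_{V'V}$ — where $*$ denotes the $\hat\rho$-zero-mean part — when paired against $\tilde U^{n+1}$ and projected, must be rearranged. Expanding $u^{n+1}_{h,\hat{\rho}} = \bar u^{n+1} + \tilde U^{n+1}\tilde Y^{{n+1}^\intercal}$ and taking the zero-mean part kills $\bar u^{n+1}$, leaving $\Ldet(\tilde U^{n+1}\tilde Y^{{n+1}^\intercal})^* = \Ldet(\tilde U^{n+1})\tilde Y^{{n+1}^\intercal}$ (already zero mean after the projection structure, since $\tilde Y^{n+1}$ has zero mean by Lemma~\ref{lemma:discprop}(2)). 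Thus $\big(\Ldet^*(u^{n+1}), \tilde U^{n+1}\big)_{V'V} = \tilde Y^{{n+1}^\intercal}\langle\tilde U^{{n+1}^\intercal},\tilde U^{n+1}\rangle_{\Ldet}$ pointwise in $\hat\Gamma$, which is a deterministic (constant-in-$\hat\Gamma$) $R\times R$ matrix acting on $\tilde Y^{n+1}$; hence $\Pro_{\hat\rho,\mathcal{Y}^n}^\perp$ leaves it essentially as is modulo the $Y^n$-components — one uses $\Pro_{\hat\rho,\mathcal{Y}^n}^\perp[\tilde Y^{n+1}] = \tilde Y^{n+1} - Y^n$, valid because $\langle\tilde Y^{{n+1}^\intercal},Y^n\rangle_{\lhr}=\mathrm{Id}$. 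Collecting the implicit matrix term on the left and moving $\tilde M^{n+1}Y^n/\triangle t$ around gives exactly the left factor $\big(\tilde M^{n+1} + \triangle t\langle\tilde U^{{n+1}^\intercal},\tilde U^{n+1}\rangle_{\Ldet}\big)$ multiplying $(\tilde Y^{n+1}-Y^n)$, with the $\Lstoch$ and $f^*$ terms on the right.

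The main obstacle, and the step requiring the most care, is \eqref{eq:semieq3}: one must correctly interpret the $*$-operation and the projector $\Pro_{\hat\rho,\mathcal{Y}^n}^\perp$ acting on the implicit term, verify that $\Ldet$ acting on the expansion of $u^{n+1}$ produces a matrix that is genuinely constant over $\hat\Gamma$ (so that it can be pulled out of the projector as a right-multiplication), and confirm that the projector simplification $\Pro_{\hat\rho,\mathcal{Y}^n}^\perp[\tilde Y^{n+1}] = \tilde Y^{n+1}-Y^n$ applies — all three of these rely essentially on the properties collected in Lemma~\ref{lemma:discprop}. Steps \eqref{eq:semieq1}--\eqref{eq:semieq2} are then routine substitutions using only linearity of $\Ldet$, the product rule $\Ldet(v_1v_2)=\Ldet(v_1)v_2$, and $\mathbb{E}\hr[Y_j^n]=0$, $\langle\tilde Y^{{n+1}^\intercal},Y^n\rangle_{\lhr}=\mathrm{Id}$.
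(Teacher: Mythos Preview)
Your proposal is correct and follows essentially the same route as the paper's proof: substitute the decomposition $\mathcal{L}(u^n,u^{n+1})=\Ldet(u^{n+1})+\Lstoch(u^n)$ into each of \eqref{eq:discDLReq1}--\eqref{eq:discDLReq3}, then simplify the implicit $\Ldet$-contributions using $\mathbb{E}\hr[Y^n_j]=0$, $\mathbb{E}\hr[\tilde Y^{n+1}_j]=0$, and the discrete DO identity $\langle\tilde Y^{{n+1}^\intercal},Y^n\rangle_{\lhr}=\mathrm{Id}$ from Lemma~\ref{lemma:discprop}. The paper organizes the computation by naming the intermediate terms $T_1,\dots,T_6$ and expanding the projector in $T_6$ explicitly, but the algebraic content is identical to what you outline; in particular, your simplification $\Pro_{\hat\rho,\mathcal{Y}^n}^\perp[\tilde Y^{n+1}]=\tilde Y^{n+1}-Y^n$ is exactly how the paper collapses $T_6$.
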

	\begin{proof}
		The equation for the mean \eqref{eq:discDLReq1} using the semi-implicit scheme \eqref{eq:discop_decomp} can be written as 
		\begin{multline*}
			\Big\langle \frac{\bar{u}^{n+1} - \bar{u}^n}{\triangle t},v_h \Big\rangle_{H} + \underbrace{\big(\mathbb{E}_{\hat{\rho}}[\Ldet(\bar{u}^{n+1})], v_h \big)_{V' V}}_{T_1} \\+ \underbrace{\big(\mathbb{E}_{\hat{\rho}}[\Ldet(\tilde{U}^{n+1}Y^{{n+1}^\intercal})], v_h \big)_{V' V}}_{T_2}
			= - \big(\mathbb{E}_{\hat{\rho}}[\Lstoch(u^n_{h,\hat{\rho}}) - f^{n,n+1}], v_h \big)_{V' V} .
		\end{multline*}
		Noticing that
		\begin{align*}
			T_1 &= \big(\Ldet(\bar{u}^{n+1}), v_h\big)_{V'V} = \langle \bar{u}^{n+1}, v_h\rangle_{\Ldet}\\
			T_2 &= \big(\Ldet(\tilde{U}^{n+1})\mathbb{E}_{\hat{\rho}}[Y^{{n+1}^\intercal}], v_h \big)_{V' V} = 0
		\end{align*}
		gives us equation \eqref{eq:semieq1}. Concerning the equation for the deterministic modes we derive 
		\begin{multline*}
			\Big\langle \frac{\tilde{U}^{n+1}_j - U^n_j}{\triangle t}, v_h\Big\rangle_H +
			\underbrace{\big(\mathbb{E}_{\hat{\rho}}[\Ldet(\bar{u}^{n+1}) Y^n_j], v_h \big)_{V' V}}_{T_3} \\
			+\underbrace{\big(\mathbb{E}_{\hat{\rho}}[\Ldet(\tilde{U}^{n+1}\tilde{Y}^{{n+1}^\intercal}) Y^n_j], v_h \big)_{V' V}}_{T_4} \\= 
			-\big(\mathbb{E}_{\hat{\rho}}[(\Lstoch(u^n_{h,\hat{\rho}})-f^{n,n+1}) Y^n_j], v_h \big)_{V' V}.
		\end{multline*}
		The term $T_3$ vanishes since $\mathbb{E}_{\hat{\rho}}[Y^n] = 0$ and the term $T_4$ can be further expressed as 
		\begin{align*}
			T_4 &= \big(\Ldet(\tilde{U}^{n+1})\mathbb{E}_{\hat{\rho}}[\tilde{Y}^{{n+1}^\intercal} Y^n_j], v_h \big)_{V' V} = \big(\Ldet(\tilde{U}^{n+1}_j), v_h \big)_{V' V} \\
			&= \langle \tilde{U}^{n+1}_j, v_h\rangle_{\Ldet},
		\end{align*}
		where we used the discrete DO condition \eqref{eq:discDO1}. Finally, the stochastic equation \eqref{eq:discDLReq3} can be written as
		\begin{multline*}
			\Big(\frac{\tilde{Y}^{n+1} - Y^n}{\triangle t}\Big) (\tilde{M}^{n+1}) +
			\underbrace{\Pro_{\hat{\rho},\mathcal{Y}^n}^{\perp} \Big[ \big( \Ldet^{*}(\tilde{U}^{n+1}\tilde{Y}^{{n+1}^\intercal}),\tilde{U}^{n+1} \big)_{V'V} \Big]}_{T_6} \\
			+ \underbrace{\Pro_{\hat{\rho},\mathcal{Y}^n}^{\perp} \Big[ \big( \Ldet^{*}(\bar{u}^{n+1}),\tilde{U}^{n+1} \big)_{V'V} \Big]}_{T_5}  \\= - \Pro_{\hat{\rho},\mathcal{Y}^n}^{\perp} \Big[ \big( \Lstoch^{*}(u^n_{h,\hat{\rho}}) - f^{{n,n+1}^{{*}}},\tilde{U}^{n+1} \big)_{V'V} \Big].
		\end{multline*} 
		The term $T_5$ vanishes since $\Ldet^*(\bar{u}^{n+1}) = 0$. As for $T_6$, we derive
		\begin{align*}
			T_6& =\, \big( \Ldet(\tilde{U}^{n+1})\tilde{Y}^{{n+1}^\intercal},\tilde{U}^{n+1} \big)_{V'V} \\
			&\hspace{1cm}- \big(\Ldet(\tilde{U}^{n+1})\underbrace{\mathbb{E}_{\hat{\rho}}[\tilde{Y}^{{n+1}^\intercal}Y^n]}_{\text{Id}}Y^{n^\intercal},\tilde{U}^{n+1} \big)_{V'V} \\
			&\hspace{1cm}-\Pro_{\hat{\rho},\mathcal{Y}^n}^{\perp} \Big[ \big( \Ldet(\tilde{U}^{n+1})\underbrace{\mathbb{E}_{\hat{\rho}}[\tilde{Y}^{{n+1}^\intercal}]}_{=0},\tilde{U}^{n+1} \big)_{V'V} \Big]\\
			&=\, \langle\tilde{U}^{n+1}, \tilde{U}^{n+1}\rangle_{\Ldet}(\tilde{Y}^{{n+1}^\intercal} - Y^{n^\intercal})
		\end{align*}
		which leads us to the sought equation \eqref{eq:semieq3}.
		\qed
	\end{proof}
	
	We see from \eqref{eq:semieq1}--\eqref{eq:semieq3} that, similarly to the explicit Euler scheme, the equations for the mean, deterministic modes and stochastic modes are decoupled. 
	If the spatial discretization of the PDEs \eqref{eq:semieq1} and \eqref{eq:semieq2} is performed by the Galerkin approximation, the final linear system involves the inversion of the matrix 
	$$A_{ij} = \langle \varphi_j, \varphi_i\rangle_H + \triangle t \langle\varphi_j, \varphi_i\rangle_{\Ldet},$$ where $\{\varphi_i\}$ is the basis of $V_h$ in which the solution is represented. Both the mass matrix $\langle \varphi_j, \varphi_i\rangle_H$ and the stiffness matrix $\langle\varphi_j, \varphi_i\rangle_{\Ldet}$ are positive definite and do not evolve with time, so that an LU factorization can be computed once and for all at the beginning of the simulation. Concerning the stochastic equation \eqref{eq:semieq3}, we need to solve a linear system with the matrix $\tilde{M}^{n+1} + \triangle t\langle\tilde{U}^{{n+1}^\intercal}, \tilde{U}^{n+1}\rangle_{\Ldet}$ for each collocation point $\omega_k$, unlike the explicit Euler method, where the system involves only the matrix $\tilde{M}^{n+1}$. The matrix $\tilde{M}^{n+1} + \triangle t\langle\tilde{U}^{{n+1}^\intercal}, \tilde{U}^{n+1}\rangle_{\Ldet}$ is symmetric and positive definite with the smallest singular value bigger than that of $\tilde{M}^{n+1}$. Notice, however, that if $\tilde{M}^{n+1}$ is rank deficient, also the matrix $\tilde{M}^{n+1} + \triangle t\langle\tilde{U}^{{n+1}^\intercal}, \tilde{U}^{n+1}\rangle_{\Ldet}$ will be so.
	
	Note that there exists a unique discrete DLR solution for the explicit and semi-implicit version of Algorithm~\ref{alg:ourscheme} also in the rank-deficient case (see Lemma~\ref{lemma:existence} below). The existence of solutions for the implicit version remains still an open question.
	
	\subsection{Discrete variational formulation for the full-rank case}
	
	\corr{This subsection will closely follow the geometrical interpretation introduced in Section \ref{sec:DLR}.} We will introduce analogous geometrical concepts for the discrete setting, i.e.\ manifold of $R$-rank functions, tangent space and orthogonal projection, and will show in Theorem \ref{th:discvarform} that the scheme from Algorithm \ref{alg:ourscheme} can be written in a (discrete) variational formulation, assuming that the matrix $\tilde{M}^{n+1}$ stays full-rank.
	
	\begin{definition}[Discrete manifold of $R$-rank functions]
		By $\mathcal{M}^{h,\hat{\rho}}_R\subset V_h\otimes\lhrz$ we denote the manifold of all rank $R$ functions with zero mean that belong to the (possibly finite dimensional) space $V_h\otimes\lhr$, namely
		\begin{align}\label{eq:discDLRman}
			\begin{split}
				&\mathcal{M}^{h,\hat{\rho}}_R = \Big\{v^*\in V_h\otimes\lhrz \,|\; v^* = \sum_{i=1}^R U_i Y_i,\quad \{Y_i\}_{i=1}^R \subset \lhrz\\
				&\hspace{0.4cm} \langle Y_i, Y_j\rangle_{\lhr} = \delta_{ij}, \;\forall \eR{i,j}, \, \{U_i \}_{i=1}^R\subset V_h \text{ linearly independent}\Big\}.
			\end{split}
		\end{align}	
	\end{definition}
	
	\begin{proposition}[Discrete tangent space at $UY^\intercal$]
		\sloppy{The tangent space $\mathcal{T}_{UY^\intercal}\mathcal{M}^{h,\hat{\rho}}_R$ at a point $U Y^\intercal \in\mathcal{M}^{h,\hat{\rho}}_R$ is formed as}
		\begin{align}\label{eq:discDLRtang}
			\begin{split}
				&\mathcal{T}_{UY^\intercal}\mathcal{M}^{h,\hat{\rho}}_R = \Big\{  \delta v\in V_h\otimes\lhrz\,|\, \delta v = \sum_{i=1}^R U_i\delta Y_i + \delta U_i Y_i,\\
				&\hspace{2.5cm}\delta U_j\in V_h,\;\delta Y_i\in\lhrz,\;\langle \delta Y_i,Y_j\rangle_{\lhr} = 0,\,\forall\eR{i,j}\Big\}.
			\end{split}
		\end{align}
	\end{proposition}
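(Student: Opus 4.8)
The plan is to mimic the proof of the continuous tangent space characterization (equation \eqref{eq:DLRtang}), only now working in the finite-dimensional ambient space $V_h\otimes\lhrz$. First I would observe that $\mathcal{M}^{h,\hat{\rho}}_R$ is a smooth embedded submanifold of $V_h\otimes\lhrz$: since $V_h$ is finite-dimensional and $\lhr\cong\mathbb{R}^{\hat N}$, the set of rank-$R$ functions with zero mean is (locally, after fixing the orthonormality gauge) the image of a smooth parametrization $(U_1,\dots,U_R,Y_1,\dots,Y_R)\mapsto\sum_i U_iY_i$ defined on the open subset where the $U_i$ are linearly independent and the $Y_i$ form an orthonormal zero-mean system. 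The tangent space is then, by definition, the image of the differential of any such parametrization at the point $UY^\intercal$.

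Next I would compute that differential. Differentiating $\sum_i U_iY_i$ along a curve $t\mapsto\sum_i U_i(t)Y_i(t)$ through $UY^\intercal$ gives $\delta v=\sum_i (U_i\,\delta Y_i+\delta U_i\,Y_i)$ with $\delta U_i\in V_h$ and $\delta Y_i\in\lhrz$ (the zero-mean constraint $\mathbb{E}\hr[Y_i]=0$ is linear, so its differential just says $\mathbb{E}\hr[\delta Y_i]=0$, i.e. $\delta Y_i\in\lhrz$). The orthonormality constraint $\langle Y_i,Y_j\rangle_{\lhr}=\delta_{ij}$ differentiates to $\langle\delta Y_i,Y_j\rangle_{\lhr}+\langle Y_i,\delta Y_j\rangle_{\lhr}=0$; to pin down a genuine section of the tangent bundle one uses the gauge $\langle\delta Y_i,Y_j\rangle_{\lhr}=0$ for all $i,j$ (the discrete DO condition), which is the analogue of \eqref{eq:DOcond} and is exactly the constraint appearing in \eqref{eq:discDLRtang}. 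This shows the right-hand side of \eqref{eq:discDLRtang} is contained in the tangent space.

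For the reverse inclusion — that every such $\delta v$ is actually attained — I would exhibit, for a given admissible $(\delta U_i,\delta Y_i)$, an explicit curve, e.g. $U_i(t)=U_i+t\,\delta U_i$ and $Y_i(t)$ obtained by Gram--Schmidt-orthonormalizing $Y_i+t\,\delta Y_i$ within $\lhrz$ (which is smooth near $t=0$ since the $Y_i$ are orthonormal), and check that this curve lies on $\mathcal{M}^{h,\hat{\rho}}_R$ for small $t$ and has the prescribed derivative at $t=0$, using that the gauge condition makes the Gram--Schmidt correction second order in $t$. A cleaner alternative is a dimension count: the right-hand side of \eqref{eq:discDLRtang} is a linear space, and comparing its dimension with $\dim\mathcal{M}^{h,\hat{\rho}}_R$ (computed from the parametrization, subtracting the $R(R+1)/2$ orthonormality constraints and the $R$ zero-mean constraints and accounting for the gauge) shows equality, so the already-established inclusion must be an equality.

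The main obstacle I anticipate is the bookkeeping around the gauge: without imposing $\langle\delta Y_i,Y_j\rangle_{\lhr}=0$ one only gets the antisymmetric relation, and the map from parameters to points has a nontrivial kernel (reparametrizations within the $\mathrm{span}$), so some care is needed to argue that \eqref{eq:discDLRtang} — with the symmetric-looking but actually one-sided gauge constraint — is genuinely the full tangent space and not a proper subspace or an overcount. In practice, since this is the verbatim discrete analogue of the continuous \eqref{eq:DLRtang} (whose proof is referenced to \cite{Musharbash15}) with $V$ replaced by $V_h$ and $\lr$ by $\lhr$, the author will almost certainly just remark that the proof carries over mutatis mutandis, and I would do the same, pointing to \cite{Musharbash15} and noting that finite-dimensionality of the ambient space only simplifies matters.
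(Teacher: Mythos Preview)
Your anticipation in the last paragraph is exactly what happens: the paper states this proposition without proof, just as it does for the continuous analogue \eqref{eq:DLRtang}, and simply refers the reader to \cite{Musharbash15} for details. Your sketch is correct and in fact considerably more detailed than anything the paper provides; the differentiation-of-a-curve argument together with the gauge bookkeeping (or the dimension count) is the standard way to justify this characterization, and finite-dimensionality indeed only simplifies matters.
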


	The projection $\Pi^{h,\hat{\rho}}_{UY^\intercal}$ is defined in the discrete space $V_h\otimes \lhr$ analogously to its continuous version \eqref{eq:DLRproj}. It holds $$\Pi^{h,\hat{\rho}}_{UY^\intercal}:\, V_h\otimes\lhr\to \mathcal{T}_{UY^\intercal}\mathcal{M}^{h,\hat{\rho}}_R\;\subset V_h\otimes \lhr,\qquad\forall\, UY^\intercal\in\mathcal{M}_R^{h,\hat{\rho}}.$$
	A discrete analogue of Lemma~\ref{lemma:proj_symm} holds, i.e.\ 
	\begin{equation}\label{eq:opsymdisc}
		(\mathcal{K}, \Pi^{h,\hat{\rho}}_{U Y^\intercal}[v] )_{V'V,\lhr} = ( \Pi^{h,\hat{\rho}}_{U Y^\intercal} [\mathcal{K}], v)_{V'V,\lhr},\qquad \forall v\in V_h\otimes\lhr,\;\mathcal{K}\in V_h'\otimes\lhr.
	\end{equation}

	The solution of the proposed numerical scheme \eqref{eq:discDLReq1}--\eqref{eq:discDLReq4} satisfies a discrete variational formulation analogous to the variational formulation \eqref{eq:DLRvar}. To show this, we first present a technical lemma which will be important in deriving the variational formulation as well as in the stability analysis presented in Section~\ref{sec:stabest}.
	
	\begin{lemma}\label{lemma:unun+1}
		Let $u^{n}_{h,\hat{\rho}}, u^{n+1}_{h,\hat{\rho}}$ be the discrete DLR solution at $t_n, t_{n+1}$, respectively, from the scheme in Algorithm \ref{alg:ourscheme}. Then the zero-mean parts $u^{n,*}_{h,\hat{\rho}}, u^{n+1,*}_{h,\hat{\rho}}$ satisfy
		\begin{enumerate}
			\item $u^{{n}^*}_{h,\hat{\rho}}\in \mathcal{T}_{\tilde{U}^{n+1}Y^{n^\intercal}} \mathcal{M}_R^{h,\hat{\rho}},$
			\item $u^{n+1,*}_{h,\hat{\rho}}\in \mathcal{T}_{\tilde{U}^{n+1}Y^{n^\intercal}} \mathcal{M}_R^{h,\hat{\rho}}$.
		\end{enumerate}
	\end{lemma}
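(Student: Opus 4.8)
The plan is to verify both claims directly from the characterization \eqref{eq:discDLRtang} of the discrete tangent space, using the update rules in Algorithm~\ref{alg:ourscheme} together with the properties collected in Lemma~\ref{lemma:discprop}. For claim~(1), I would write $u^{n,*}_{h,\hat{\rho}} = \sum_{i=1}^R U_i^n Y_i^n = U^n Y^{n^\intercal}$ and aim to exhibit it in the form $\sum_i \tilde U_i^{n+1}\delta Y_i + \delta U_i Y_i^n$ with $\langle\delta Y_i, Y_j^n\rangle_{\lhr}=0$. The natural choice is $\delta Y_i = 0$ (so the orthogonality constraint is trivially satisfied) and $\delta U_i = U_i^n - \tilde U_i^{n+1}$; one just needs $\delta U_i \in V_h$, which holds since both $U_i^n$ and $\tilde U_i^{n+1}$ lie in $V_h$ by construction (step~2 of the algorithm). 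Then $\sum_i \tilde U_i^{n+1}\cdot 0 + (U_i^n - \tilde U_i^{n+1})Y_i^n = U^n Y^{n^\intercal} = u^{n,*}_{h,\hat{\rho}}$, and since $u^{n,*}_{h,\hat{\rho}}\in V_h\otimes\lhrz$ (the $Y_i^n$ have zero mean), it indeed lies in $\mathcal{T}_{\tilde U^{n+1}Y^{n^\intercal}}\mathcal{M}_R^{h,\hat{\rho}}$. I should also note that $\tilde U^{n+1}Y^{n^\intercal}$ is a legitimate base point, i.e. $\tilde U^{n+1}Y^{n^\intercal}\in\mathcal{M}_R^{h,\hat{\rho}}$, which uses that $Y^n$ is orthonormal with zero mean and that we are in the full-rank setting where $\tilde U^{n+1}$ is linearly independent.

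For claim~(2), I would start from step~4 of the algorithm, $u^{n+1,*}_{h,\hat{\rho}} = \sum_j Y_j^{n+1}U_j^{n+1} = \sum_j \tilde Y_j^{n+1}\tilde U_j^{n+1} = \tilde U^{n+1}\tilde Y^{{n+1}^\intercal}$, so it suffices to show $\tilde U^{n+1}\tilde Y^{{n+1}^\intercal}\in\mathcal{T}_{\tilde U^{n+1}Y^{n^\intercal}}\mathcal{M}_R^{h,\hat{\rho}}$. Writing $\tilde Y^{n+1} = Y^n + (\tilde Y^{n+1}-Y^n)$, we get $\tilde U^{n+1}\tilde Y^{{n+1}^\intercal} = \tilde U^{n+1}Y^{n^\intercal} + \tilde U^{n+1}(\tilde Y^{n+1}-Y^n)^\intercal$; the first summand is $\tilde U^{n+1}Y^{n^\intercal}$ itself, but the tangent space at a point of a linear-type manifold contains the point only up to the zero-mean caveat — more carefully, I want to put the difference $u^{n+1,*}_{h,\hat{\rho}} - 0$ into tangent form, so the cleaner route is: take $\delta U_i = 0$ and $\delta Y_i = \tilde Y_i^{n+1}$, giving $\sum_i \tilde U_i^{n+1}\tilde Y_i^{n+1} + 0 = u^{n+1,*}_{h,\hat{\rho}}$, and check the constraint $\langle \delta Y_i, Y_j^n\rangle_{\lhr} = \langle \tilde Y_i^{n+1}, Y_j^n\rangle_{\lhr} = \delta_{ij}$ by property~3 of Lemma~\ref{lemma:discprop} — but this gives $\delta_{ij}$, not $0$, so the pure choice $\delta Y_i = \tilde Y_i^{n+1}$ fails the constraint. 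Instead I would decompose $\tilde Y_i^{n+1} = Y_i^n + \eta_i$ with $\eta_i := \tilde Y_i^{n+1} - Y_i^n$, and use $\langle \eta_i, Y_j^n\rangle_{\lhr} = 0$ (the discrete DO condition \eqref{eq:discDO1}, equivalently property~3 combined with orthonormality of $Y^n$). Then $u^{n+1,*}_{h,\hat{\rho}} = \tilde U^{n+1}Y^{n^\intercal} + \tilde U^{n+1}\eta^\intercal$; the second term is already in tangent form with $\delta Y_i = \eta_i$ (constraint satisfied) and $\delta U_i = 0$, while the first term $\tilde U^{n+1}Y^{n^\intercal}$ is the base point and lies in the tangent space via $\delta Y_i = 0$, $\delta U_i = \tilde U_i^{n+1}$, provided we also check $\mathbb{E}_{\hat\rho}[\tilde Y_i^{n+1}]=0$ (property~2 of Lemma~\ref{lemma:discprop}) so that $u^{n+1,*}_{h,\hat{\rho}}$ genuinely sits in $V_h\otimes\lhrz$.

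The argument is essentially bookkeeping, so I do not expect a serious obstacle; the only subtlety — and the step I would be most careful about — is keeping straight which vectors satisfy the orthogonality constraint $\langle\delta Y_i, Y_j^n\rangle_{\lhr}=0$ versus which satisfy $\langle\cdot,Y_j^n\rangle_{\lhr}=\delta_{ij}$, and correspondingly splitting each $\tilde Y^{n+1}$ into its $Y^n$-component (absorbed into the $\delta U$-part of the tangent decomposition) and its $Y^n$-orthogonal remainder (the genuine $\delta Y$-part). All the required identities — $\mathbb{E}_{\hat\rho}[Y^n]=\mathbb{E}_{\hat\rho}[\tilde Y^{n+1}]=0$, $\langle Y^{n^\intercal},Y^n\rangle_{\lhr}=\mathrm{Id}$, $\langle\tilde Y^{{n+1}^\intercal},Y^n\rangle_{\lhr}=\mathrm{Id}$, and linear independence of $\tilde U^{n+1}$ in the full-rank regime — are exactly Lemma~\ref{lemma:discprop} and the hypotheses of Algorithm~\ref{alg:ourscheme}, so nothing beyond the excerpt is needed.
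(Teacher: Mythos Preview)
Your approach is exactly the paper's: verify membership directly from the characterization \eqref{eq:discDLRtang}, using Lemma~\ref{lemma:discprop} for the discrete DO condition. Part~(2) is carried out just as in the paper, which writes $u^{n+1,*}_{h,\hat{\rho}} = \tilde U^{n+1}(\tilde Y^{n+1}-Y^n)^\intercal + \tilde U^{n+1}Y^{n^\intercal}$ and invokes \eqref{eq:discDO1}.

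There is, however, an arithmetic slip in part~(1): with your choice $\delta Y_i = 0$, $\delta U_i = U_i^n - \tilde U_i^{n+1}$, the resulting element is
\[
\sum_i \tilde U_i^{n+1}\cdot 0 + (U_i^n - \tilde U_i^{n+1})Y_i^n \;=\; (U^n - \tilde U^{n+1})Y^{n^\intercal},
\]
not $U^n Y^{n^\intercal}$. The correct (and simpler) choice is $\delta U_i = U_i^n$, giving $u^{n,*}_{h,\hat{\rho}} = \tilde U^{n+1}\,0^\intercal + U^n Y^{n^\intercal}$; this is precisely the decomposition the paper uses. The slip is harmless for the conclusion, but the stated equality is false as written.
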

	\begin{proof}
		\begin{enumerate}
			\item The solution $u^{n,*}_{h,\hat{\rho}}$ can be written as $$u^{n,*}_{h,\hat{\rho}} = \tilde{U}^{n+1} 0^\intercal + U^n Y^{n^\intercal}.$$ Since $\langle 0^\intercal, Y^n\rangle_{\lhr} = 0$, using the definition \eqref{eq:discDLRtang} we have $$u^{n,*}_{h,\hat{\rho}}\in\mathcal{T}_{\tilde{U}^{n+1}Y^{n^\intercal}} \mathcal{M}_R^{h,\hat{\rho}}.$$
			\item The newly computed solution $u^{n+1,*}_{h,\hat{\rho}}$ can be expressed as $$u^{n+1,*}_{h,\hat{\rho}} = \tilde{U}^{n+1}(\tilde{Y}^{{n+1}} - Y^{n})^\intercal + \tilde{U}^{n+1}Y^{n^\intercal}.$$ Based on \eqref{eq:discDO1} in Lemma \ref{lemma:discprop}, we know that $\langle\tilde{Y}^{{n+1}^\intercal} - Y^{n^\intercal}, Y^n\rangle_{\lhr} = 0$, i.e.\ again using the definition \eqref{eq:discDLRtang} we have $u^{n+1,*}_{h,\hat{\rho}}\in\mathcal{T}_{\tilde{U}^{n+1}Y^{n^\intercal}} \mathcal{M}_R^{h,\hat{\rho}}$.
		\end{enumerate}
		\qed
	\end{proof}
	
	\begin{remark}
		Note that for any function of the form $v = \tilde{U}^{n+1} K^\intercal$ or $v = J Y^{n^\intercal}$ with $K\in (\lhr)^R$, $J\in (V_h)^R$, it holds $v\in \mathcal{T}_{\tilde{U}^{n+1}Y^{n^\intercal}} \mathcal{M}_R^{h,\hat{\rho}}$ since we have $$J Y^{n^\intercal} = \tilde{U}^{n+1} 0^\intercal + J Y^{n^\intercal},\quad \mathbb{E}_{\hat{\rho}}[0^\intercal Y^n] = 0$$ $$\tilde{U}^{n+1} K^\intercal = \tilde{U}^{n+1}(\Pro_{\hat{\rho},Y^n}^{\perp}[K])^\intercal + \tilde{U}^{n+1}(\Pro_{\hat{\rho},Y^n}[K])^\intercal,\quad \langle(\Pro_{\hat{\rho},Y^n}^{\perp}[K])^\intercal, Y^n\rangle_{\lhr} = 0.$$
	\end{remark}
	
	Since $\mathcal{T}_{\tilde{U}^{n+1}Y^{n^\intercal}} \mathcal{M}_R^{h,\hat{\rho}}$ is a vector space, it includes any linear combination of $u^{n,*}_{h,\hat{\rho}}$ and $u^{n+1,*}_{h,\hat{\rho}}$. The following lemma is an analogue of Lemma \ref{lemma:DLRproj} and will become useful when we derive the discrete variational formulation.
	\begin{lemma}\label{lemma:DLRprojdisc}
		Let $u^{n}_{h,\hat{\rho}}, u^{n+1}_{h,\hat{\rho}}$ be the discrete DLR solutions at times $t_{n}, t_{n+1}$ as defined in Algorithm~\ref{alg:ourscheme}. Then the zero-mean parts $u^{{n+1}^*}_{h,\hat{\rho}}, u^{n^*}_{h,\hat{\rho}}$ satisfy
		\begin{multline}\label{eq:projeqsum}
			\Big( \frac{(u^{n+1}_{h,\hat{\rho}} - u^n_{h,\hat{\rho}})^*}{\triangle t} + \Pi_{\tilde{U}^{n+1}Y^{n^\intercal}}^{h,\hat{\rho}}[\mathcal{L}^*(u^n_{h,\hat{\rho}}, u^{n+1}_{h,\hat{\rho}}) - f^{{n,n+1}^*}], v_h\Big)_{V'V,\lhr} = 0\\
			\forall v_h\in V_h\otimes\lhr
		\end{multline}
	\end{lemma}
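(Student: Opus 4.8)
The plan is to mimic the proof of Lemma~\ref{lemma:DLRproj}, carrying the staggered structure of Algorithm~\ref{alg:ourscheme} through the same projection bookkeeping, but now with the test functions restricted to $V_h\otimes\lhr$ and the projection taken onto $\mathcal{T}_{\tilde U^{n+1}Y^{n^\intercal}}\mathcal{M}_R^{h,\hat\rho}$ (note: onto the \emph{old} stochastic subspace $\mathcal Y^n$ and the \emph{new} deterministic subspace $\spn\{\tilde U^{n+1}\}$, which is exactly the mixture of spaces appearing in steps \eqref{eq:discDLReq2}--\eqref{eq:discDLReq3}). First I would take equation \eqref{eq:discDLReq2}, multiply it by $Y_j^n$, sum over $j$, and read off, in $V_h'\otimes\lhr$ tested against $v_h\,w$ with $v_h\in V_h$, $w\in\lhr$, the identity
\begin{equation*}
\Big(\frac{\tilde U^{n+1}Y^{n^\intercal} - U^n Y^{n^\intercal}}{\triangle t} + \mathbb{E}_{\hat\rho}\big[(\mathcal{L}(u^n_{h,\hat\rho},u^{n+1}_{h,\hat\rho}) - f^{n,n+1})Y^n\big]Y^{n^\intercal},\, v_h w\Big)_{V'V,\lhr} = 0,
\end{equation*}
using as before that $\mathbb{E}_{\hat\rho}[(\mathcal{L}^* - f^{n,n+1,*})Y^n] = \mathbb{E}_{\hat\rho}[(\mathcal{L} - f^{n,n+1})Y^n]$ because $Y^n\subset\lhrz$. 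This term is precisely $\Pro_{\hat\rho,\mathcal Y^n}[\mathcal{L}^*(u^n,u^{n+1}) - f^{n,n+1,*}]$ applied on the right, paired with the first part of the update $\tfrac{d}{dt}$-analogue.

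Next I would take \eqref{eq:discDLReq3}, multiply by $\tilde U_j^{n+1}$, sum over $j$, and test against $v_h w$, obtaining
\begin{equation*}
\Big(\tilde U^{n+1}\frac{(\tilde Y^{n+1} - Y^n)^\intercal}{\triangle t} + \Pro_{\hat\rho,\mathcal Y^n}^\perp\big[(\mathcal{L}^*(u^n,u^{n+1}) - f^{n,n+1,*},\tilde U^{n+1})_{V'V}\, (\tilde M^{n+1})^{-1}\tilde U^{{n+1}^\intercal}\big],\, v_h w\Big)_{V'V,\lhr} = 0,
\end{equation*}
where I have inserted $(\tilde M^{n+1})^{-1}\tilde U^{{n+1}^\intercal}$ against $\tilde U^{n+1}$ — this is where the \emph{full-rank} hypothesis on $\tilde M^{n+1}$ is used. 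Adding the two displays and recognizing the sum of the two projector terms as $\Pi^{h,\hat\rho}_{\tilde U^{n+1}Y^{n^\intercal}}[\mathcal{L}^*(u^n,u^{n+1}) - f^{n,n+1,*}]$ (by the defining formula \eqref{eq:DLRproj} transported to the discrete setting, with $\mathcal U = \spn\{\tilde U^{n+1}\}$, $\mathcal Y = \mathcal Y^n$), and observing that the derivative part telescopes to $\tfrac{1}{\triangle t}\big(\tilde U^{n+1}\tilde Y^{{n+1}^\intercal} - U^n Y^{n^\intercal}\big) = \tfrac{1}{\triangle t}(u^{n+1}_{h,\hat\rho} - u^n_{h,\hat\rho})^*$ — here using \eqref{eq:discDLReq4} to replace $\tilde U^{n+1}\tilde Y^{{n+1}^\intercal}$ by $U^{n+1}Y^{{n+1}^\intercal}$, and that subtracting means cancels in the difference since both sides have the same mean structure — yields \eqref{eq:projeqsum} for all test functions of the form $v_h w$. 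Finally I would note that finite linear combinations of such rank-one functions span $V_h\otimes\lhr$ (this space being finite-dimensional, or at worst the $\lhr$ factor being finite-dimensional so that simple-function density is immediate), which extends the identity to all $v_h\in V_h\otimes\lhr$.

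The main obstacle I anticipate is the careful handling of the $\Pro^\perp_{\hat\rho,\mathcal Y^n}$ term in step \eqref{eq:discDLReq3}: one must verify that, after multiplying by $\tilde U^{n+1}_j$ and summing, the resulting matrix expression genuinely coincides with the second summand $\Pro_{\mathcal Y}^\perp[\,\cdot\,(M^{-1}U^\intercal)_i]$ in the discrete analogue of \eqref{eq:DLRproj}, which requires inserting $(\tilde M^{n+1})^{-1}\tilde M^{n+1} = \mathrm{Id}$ and commuting $\Pro^\perp_{\hat\rho,\mathcal Y^n}$ (which acts only on the stochastic variable) past the deterministic factors — this is routine but is exactly the place where full-rankness of $\tilde M^{n+1}$ is indispensable, and is the reason this lemma is confined to the full-rank case (the rank-deficient analogue being deferred to Section~\ref{sec:discvar_rankdef}). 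A secondary, minor point is bookkeeping the zero-mean projections: one should check that replacing $f^{n,n+1}$ by $f^{n,n+1,*}$ and $\mathcal{L}$ by $\mathcal{L}^*$ inside the projected terms is harmless, using $\mathbb{E}_{\hat\rho}[Y^n] = 0$ and $\mathbb{E}_{\hat\rho}[\tilde Y^{n+1}] = 0$ from Lemma~\ref{lemma:discprop}, exactly as in the continuous proof.
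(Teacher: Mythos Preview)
Your proposal is correct and follows essentially the same approach as the paper: multiply \eqref{eq:discDLReq2} by $Y^n_j$ and \eqref{eq:discDLReq3} by $(\tilde M^{n+1})^{-1}\tilde U^{{n+1}^\intercal}$, recognize the resulting terms as $\Pro_{\hat\rho,\mathcal Y^n}[\,\cdot\,]$ and $\Pro_{\hat\rho,\mathcal Y^n}^\perp[\Pro_{\tilde{\mathcal U}^{n+1}}[\,\cdot\,]]$ respectively, sum so that the intermediate term $\tilde U^{n+1}Y^{n^\intercal}$ cancels, and extend from rank-one test functions to all of $V_h\otimes\lhr$. The paper phrases the two pieces as separate weak formulations \eqref{eq:tmpprojeq1}--\eqref{eq:tmpprojeq2} rather than testing against $v_h w$ and invoking a span argument, but the content is identical.
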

	\begin{proof}
		Multiplying \eqref{eq:discDLReq2} by $Y^n_j$ and summing over $j$, we obtain 
		\begin{multline}\label{eq:proof1}
			\Big\langle \frac{\tilde{U}^{n+1}Y^{n^\intercal} - u^{n,*}_{h,\hat{\rho}}}{\triangle t}, v_h\Big\rangle_H + \Big(\mathbb{E}_{\hat{\rho}}[(\mathcal{L}(u^n_{h,\hat{\rho}}, u^{n+1}_{h,\hat{\rho}}) - f^{n,n+1}) Y^n]Y^{n^\intercal}, v_h \Big)_{V' V} = 0 \\ \quad \forall v_h \in V_h.
		\end{multline}
		Noticing that \begin{align*}\mathbb{E}_{\hat{\rho}}[(\mathcal{L}(u^n_{h,\hat{\rho}}, u^{n+1}_{h,\hat{\rho}})-f^{n,n+1}) Y^n]Y^{n^\intercal} &= \mathbb{E}_{\hat{\rho}}[(\mathcal{L}^*(u^n_{h,\hat{\rho}}, u^{n+1}_{h,\hat{\rho}})-f^{{n,n+1}^*}) Y^n]Y^{n^\intercal}\\ &= \Pro_{\hat{\rho},\mathcal{Y}^n}[\mathcal{L}^*(u^n_{h,\hat{\rho}}, u^{n+1}_{h,\hat{\rho}}) - f^{{n,n+1}^*}],\end{align*}
		and taking the weak formulation of \eqref{eq:proof1} in $\lhr$ results in 
		\begin{multline}\label{eq:tmpprojeq1}
			\langle\tilde{U}^{n+1}Y^{n^\intercal},v_h \rangle_{H,\lhr} = \langle u^{n,*}_{h,\hat{\rho}},v_h\rangle_{H,\lhr} \\+ \triangle t\big( \Pro_{\hat{\rho},\mathcal{Y}^n}[f^{{n,n+1}^*} - \mathcal{L}^*(u^n_{h,\hat{\rho}}, u^{n+1}_{h,\hat{\rho}})],v_h\big)_{V'V,\lhr}
			\quad\forall v_h\in V_h\otimes \lhr.
		\end{multline}
		Similarly, multiplying~\eqref{eq:discDLReq3} by $\tilde{U}^{n+1}$, and further writing \eqref{eq:discDLReq3} in a weak form in $\lhr$, we obtain
		\begin{multline}\label{eq:proof2}
			\Big\langle \frac{u_{h,\hat{\rho}}^{n+1,*} - \tilde{U}^{n+1} Y^{n^\intercal}}{\triangle t}\\
			+ \Pro_{\hat{\rho},\mathcal{Y}^n}^{\perp} \Big[ \big( \mathcal{L}^*(u^n_{h,\hat{\rho}}, u^{n+1}_{h,\hat{\rho}}) - f^{{n,n+1}^*},\tilde{U}^{n+1} \big)_{V'V} (\tilde{M}^{n+1})^{-1}\tilde{U}^{{n+1}^\intercal} \Big],\,w\Big\rangle_{\lhr}= 0,\\
			\quad\forall w\in\lhr.
		\end{multline}
		Since
		\begin{multline*}
			\Pro_{\hat{\rho},\mathcal{Y}^n}^{\perp} \Big[ \big( \mathcal{L}^*(u^n_{h,\hat{\rho}}, u^{n+1}_{h,\hat{\rho}}) - f^{{n,n+1}^*},\tilde{U}^{n+1} \big)_{V'V} (\tilde{M}^{n+1})^{-1}\tilde{U}^{{n+1}^\intercal} \Big]\\ = \Pro_{\hat{\rho},\mathcal{Y}^n}^{\perp}[\Pro_{\mathcal{\tilde{U}}^{n+1}}[\mathcal{L}^*(u^n_{h,\hat{\rho}}, u^{n+1}_{h,\hat{\rho}})-f^{{n,n+1}^*}]],
		\end{multline*}
		taking the weak formulation of \eqref{eq:proof2} in $V_h$ results in 
		\begin{multline}\label{eq:tmpprojeq2}
			\langle u^{n+1,*}_{h,\hat{\rho}},v_h\rangle_{H,\lhr} = \langle\tilde{U}^{n+1}Y^{n^\intercal},v_h\rangle_{H,\lhr} 
			\\+\triangle t\big( \Pro_{\hat{\rho},\mathcal{Y}^n}^\perp\big[\Pro_{\tilde{\mathcal{U}}^{n+1}}[f^{{n,n+1}^*} - \mathcal{L}^*(u^n_{h,\hat{\rho}}, u^{n+1}_{h,\hat{\rho}})]\big],v_h\big)_{V'V,\lhr} \quad\forall v_h\in V_h\otimes \lhr.
		\end{multline}	
		Finally, summing equations \eqref{eq:tmpprojeq1} and \eqref{eq:tmpprojeq2} results in \eqref{eq:projeqsum}.
		\qed
	\end{proof}
	
	We now proceed with the discrete variational formulation.
	
	\begin{theorem}[Discrete variational formulation]\label{th:discvarform}
		Let $u^n_{h,\hat{\rho}}$ and $u^{n+1}_{h,\hat{\rho}}$ be the discrete DLR solution at times $t_n$, $t_{n+1}$, respectively, $n=0,\dots,N-1$, as defined in Algorithm~\ref{alg:ourscheme}. Then it holds
		\begin{align}\label{eq:discDLRvar}
			\begin{split}
				&\hspace{0cm}\big\langle \frac{u^{n+1}_{h,\hat{\rho}} - u^n_{h,\hat{\rho}}}{\triangle t},\, v_h\big\rangle_{H,\lhr} + \big(\mathcal{L}(u^n_{h,\hat{\rho}},u^{n+1}_{h,\hat{\rho}}),\, v_h  \big)_{V'V,\lhr} = \big\langle f^{n,n+1},\,v_h\big\rangle_{H,\lhr},\\
				&\hspace{2.5cm} \forall v_h = \bar{v}_h + v_h^* \text{ with } \bar{v}_h\in V_h\text{ and }v_h^*\in \mathcal{T}_{\tilde{U}^{n+1}Y^{n^\intercal}} \mathcal{M}_R^{h,\hat{\rho}}.
			\end{split}
		\end{align}
	\end{theorem}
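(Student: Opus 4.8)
The plan is to mirror, in the discrete setting, the proof of Theorem~\ref{th:varform}: combine the ``zero-mean'' projected identity from Lemma~\ref{lemma:DLRprojdisc} with the symmetry of the extended projector (the discrete analogue \eqref{eq:opsymdisc} of Lemma~\ref{lemma:proj_symm}), and then bring in the mean equation \eqref{eq:discDLReq1} separately. First I would take the identity \eqref{eq:projeqsum} and, using \eqref{eq:opsymdisc}, move the projector $\Pi^{h,\hat{\rho}}_{\tilde{U}^{n+1}Y^{n^\intercal}}$ off the operator term and onto the test function, obtaining
\begin{equation*}
	\Big(\frac{(u^{n+1}_{h,\hat{\rho}} - u^n_{h,\hat{\rho}})^*}{\triangle t},\, v_h\Big)_{V'V,\lhr} + \big(\mathcal{L}^*(u^n_{h,\hat{\rho}},u^{n+1}_{h,\hat{\rho}}) - f^{{n,n+1}^*},\, \Pi^{h,\hat{\rho}}_{\tilde{U}^{n+1}Y^{n^\intercal}}[v_h]\big)_{V'V,\lhr} = 0
\end{equation*}
for all $v_h\in V_h\otimes\lhr$. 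Restricting to $v_h = v_h^* \in \mathcal{T}_{\tilde{U}^{n+1}Y^{n^\intercal}}\mathcal{M}_R^{h,\hat{\rho}}$, where $\Pi^{h,\hat{\rho}}_{\tilde{U}^{n+1}Y^{n^\intercal}}[v_h^*] = v_h^*$, this collapses to
\begin{equation*}
	\Big(\frac{(u^{n+1}_{h,\hat{\rho}} - u^n_{h,\hat{\rho}})^*}{\triangle t} + \mathcal{L}^*(u^n_{h,\hat{\rho}},u^{n+1}_{h,\hat{\rho}}) - f^{{n,n+1}^*},\, v_h^*\Big)_{V'V,\lhr} = 0.
\end{equation*}

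Next I would handle the mean. Equation \eqref{eq:discDLReq1} reads $\big\langle (\bar u^{n+1}-\bar u^n)/\triangle t, v_h\big\rangle_H + \big(\mathbb{E}_{\hat\rho}[\mathcal{L}(u^n_{h,\hat\rho},u^{n+1}_{h,\hat\rho}) - f^{n,n+1}], v_h\big)_{V'V} = 0$ for all $v_h\in V_h$; since the quantities in it are deterministic, tensoring against any fixed $w\in\lhr$ and using $\mathbb{E}_{\hat\rho}$-linearity shows it is equivalent to $\big((\bar u^{n+1}-\bar u^n)/\triangle t + \mathbb{E}_{\hat\rho}[\mathcal{L}(u^n_{h,\hat\rho},u^{n+1}_{h,\hat\rho}) - f^{n,n+1}], v_h\big)_{V'V,\lhr} = 0$ for all $v_h\in V_h\otimes\lhr$; in particular this holds for $v_h = v_h^*\in\mathcal{T}_{\tilde U^{n+1}Y^{n^\intercal}}\mathcal{M}_R^{h,\hat\rho}$, where it trivially also adds nothing to the previous display since $\mathbb{E}_{\hat\rho}[v_h^*]=0$ makes the pairing of the (deterministic) mean part against $v_h^*$ vanish. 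The point, exactly as in the continuous proof, is that each of the two identities — the zero-mean one and the mean one — can be harmlessly extended to test functions of the form $w_h + v_h^*$ with $w_h\in V_h$, $v_h^*\in\mathcal{T}_{\tilde U^{n+1}Y^{n^\intercal}}\mathcal{M}_R^{h,\hat\rho}$, because the ``extra'' test directions pair to zero (the mean part is orthogonal to $\lhrz$ and the $V_h$ part is killed by the $\Pro^{\perp}_{\hat\rho,\mathcal{Y}^n}$/zero-mean structure). Adding the two extended identities and using $u^{n+1}_{h,\hat\rho} - u^n_{h,\hat\rho} = (\bar u^{n+1}-\bar u^n) + (u^{n+1}_{h,\hat\rho}-u^n_{h,\hat\rho})^*$ together with $\mathcal{L} = \mathbb{E}_{\hat\rho}[\mathcal{L}(\cdot)] + \mathcal{L}^*(\cdot)$ (and likewise for $f$) yields \eqref{eq:discDLRvar}.

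One technical point I would be careful about: Lemma~\ref{lemma:DLRprojdisc} (hence \eqref{eq:projeqsum}) is stated for the scheme of Algorithm~\ref{alg:ourscheme}, and its proof invokes the invertibility of $\tilde M^{n+1}$ implicitly through the expression $(\tilde M^{n+1})^{-1}\tilde U^{{n+1}^\intercal}$ appearing in \eqref{eq:proof2}; so I would state Theorem~\ref{th:discvarform} under the standing full-rank assumption of this subsection (the rank-deficient case being deferred to Section~\ref{sec:discvar_rankdef}). I would also note that the reorthonormalization step \eqref{eq:discDLReq4} does not affect anything, since $u^{n+1}_{h,\hat\rho}$ is unchanged by it. The main obstacle, such as it is, is purely bookkeeping: making sure the ``harmless extension'' of each identity to the full test space $\bar v_h + v_h^*$ is justified in both directions (mean directions into the zero-mean identity, and tangent directions into the mean identity) so that the sum has the claimed test space, and that the semi-discrete pairings $(\cdot,\cdot)_{V'V,\lhr}$ behave exactly as their continuous counterparts — which they do, $\lhr$ being finite-dimensional, so no density argument is needed here unlike in the proof of Lemma~\ref{lemma:DLRproj}.
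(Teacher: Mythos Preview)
Your proposal is correct and follows essentially the same route as the paper's proof: use Lemma~\ref{lemma:DLRprojdisc} together with the projector symmetry \eqref{eq:opsymdisc} to obtain the zero-mean identity on tangent test functions, then combine with the mean equation \eqref{eq:discDLReq1} after observing that the cross-pairings (deterministic test functions against the zero-mean identity, and tangent test functions against the mean identity) vanish. Your derivation of the zero-mean identity is in fact slightly more direct than the paper's, which additionally invokes Lemma~\ref{lemma:unun+1} to rewrite the time-difference term as $\langle (u^{n+1}_{h,\hat\rho}-u^n_{h,\hat\rho})^*/\triangle t,\, \Pi^{h,\hat\rho}_{\tilde U^{n+1}Y^{n^\intercal}}[v_h]\rangle_{H,\lhr}$ before summing; you bypass this by moving the projector only on the operator term and then restricting to tangent test functions where $\Pi^{h,\hat\rho}$ is the identity.
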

	\begin{proof}
		Thanks to Lemma \ref{lemma:unun+1} we have $(u^{n+1}_{h,\hat{\rho}} - u^n_{h,\hat{\rho}})^* \in \mathcal{T}_{\tilde{U}^{n+1}Y^n} \mathcal{M}_R^{h,\hat{\rho}}$, and we can derive 
		\begin{align}\label{eq:proof3}
			\begin{split}
				\Big\langle \frac{(u^{n+1}_{h,\hat{\rho}} - u^n_{h,\hat{\rho}})^*}{\triangle t}, v_h\Big\rangle_{H,\lhr} &= \Big\langle \Pi_{\tilde{U}^{n+1}Y^{n^\intercal}}^{h,\hat{\rho}}\Big[\frac{(u^{n+1}_{h,\hat{\rho}} - u^n_{h,\hat{\rho}})^*}{\triangle t}\Big], v_h\Big\rangle_{H,\lhr} \\
				&= \Big\langle \frac{(u^{n+1}_{h,\hat{\rho}} - u^n_{h,\hat{\rho}})^*}{\triangle t}, \Pi_{\tilde{U}^{n+1}Y^{n^\intercal}}^{h,\hat{\rho}}[v_h]\Big\rangle_{H,\lhr}
			\end{split}
		\end{align}
		and formula \eqref{eq:opsymdisc} gives us 
		\begin{multline}\label{eq:proof4}
			\Big( \Pi_{\tilde{U}^{n+1}Y^{n^\intercal}}^{h,\hat{\rho}}[\mathcal{L}^*(u^n_{h,\hat{\rho}}, u^{n+1}_{h,\hat{\rho}}) - f^{{n,n+1}^*}],\, v_h\Big)_{V'V,\lhr} \\
			= \Big( \mathcal{L}^*(u^n_{h,\hat{\rho}}, u^{n+1}_{h,\hat{\rho}}) - f^{{n,n+1}^*},\, \Pi_{\tilde{U}^{n+1}Y^{n^\intercal}}^{h,\hat{\rho}}[v_h]\Big)_{V'V,\lhr}.
		\end{multline}
		Summing \eqref{eq:proof3}, \eqref{eq:proof4} and applying Lemma \ref{lemma:DLRprojdisc} results in 
		\begin{multline*}
			\big\langle \frac{(u^{n+1}_{h,\hat{\rho}} - u^n_{h,\hat{\rho}})^*}{\triangle t},\, v_h\big\rangle_{H,\lhr} + \big(\mathcal{L}^*(u^n_{h,\hat{\rho}},u^{n+1}_{h,\hat{\rho}}) - f^{{n,n+1}^*},\, v_h  \big)_{V'V,\lhr} = 0 \\
			\forall v_h\in\mathcal{T}_{\tilde{U}^{n+1}Y^{n^\intercal}} \mathcal{M}_R^{h,\hat{\rho}}.
		\end{multline*}
		Now summing this to equation \eqref{eq:discDLReq1} we obtain 
		\begin{multline}\label{eq:tmpeq2}
			\Big\langle \frac{\bar{u}^{n+1}_{h,\hat{\rho}} - \bar{u}^n_{h,\hat{\rho}} + (u^{n+1}_{h,\hat{\rho}} - u^n_{h,\hat{\rho}})^*}{\triangle t},\, w_h + v_h\Big\rangle_{H,\lhr} \\+ \big(\mathbb{E}_{\hat{\rho}}[\mathcal{L}(u^n_{h,\hat{\rho}},u^{n+1}_{h,\hat{\rho}}) - f^{n,n+1}] + \mathcal{L}^*(u^n_{h,\hat{\rho}},u^{n+1}_{h,\hat{\rho}}) - f^{{n,n+1}^*},\, w_h + v_h  \big)_{V'V,\lhr} \\= 0\quad\forall w_h\in V_h,\; \forall v_h \in\mathcal{T}_{\tilde{U}^{n+1}Y^{n^\intercal}} \mathcal{M}_R^{h,\hat{\rho}}
		\end{multline}
		which is equivalent to the final result \eqref{eq:discDLRvar}. In \eqref{eq:tmpeq2} we have employed
		\begin{align*}
			& \Big\langle \frac{(u^{n+1}_{h,\hat{\rho}} - u^n_{h,\hat{\rho}})^*}{\triangle t},\, w_h \Big\rangle_{H,\lhr} + \big(\mathcal{L}^*(u^n_{h,\hat{\rho}},u^{n+1}_{h,\hat{\rho}}) - f^{{n,n+1}^*},\, w_h \big)_{V'V,\lhr} = 0\\
			& \hspace{7cm}\forall w_h\in V_h\\
			& \Big\langle \frac{\bar{u}^{n+1}_{h,\hat{\rho}} - \bar{u}^n_{h,\hat{\rho}}}{\triangle t},\, v_h\Big\rangle_{H,\lhr} + \big(\mathbb{E}_{\hat{\rho}}[\mathcal{L}(u^n_{h,\hat{\rho}},u^{n+1}_{h,\hat{\rho}}) - f^{n,n+1}],\, v_h  \big)_{V'V,\lhr} = 0\\
			& \hspace{7cm}\forall v_h \in\mathcal{T}_{\tilde{U}^{n+1}Y^{n^\intercal}} \mathcal{M}_R^{h,\hat{\rho}},
		\end{align*}
		which holds as $\mathbb{E}[v_h] = 0,\; \forall v_h \in\mathcal{T}_{\tilde{U}^{n+1}Y^{n^\intercal}} \mathcal{M}_R^{h,\hat{\rho}}$.
		\qed
	\end{proof}
	
	\begin{remark}
		The preceding theorem applies to a discretization of any kind of the operator $\mathcal{L}\in \lrVdual$, not necessarily elliptic or linear, as assumed in Section~\ref{sec:prstatement}, as long as Lemma~\ref{lemma:proj_symm} holds.
	\end{remark}
	
	\subsection{Discrete variational formulation for the rank-deficient case}\label{sec:discvar_rankdef}
	The discrete variational formulation established in the previous section is valid only in the case of the deterministic basis $\tilde{U}^{n+1}$ being linearly independent, since the proof of Theorem \ref{th:discvarform} implicitly involves the inverse of ${\tilde{M}^{n+1}} = \langle \tilde{U}^{{n+1}^\intercal}, \tilde{U}^{n+1}\rangle_H$.  In this subsection, we show that a discrete variational formulation can be generalized for the rank-deficient case.
	
	When applying the discretization scheme proposed in step 3 of Algorithm~\ref{alg:ourscheme} with a rank-deficient matrix $\tilde{M}^{n+1}$, we recall that the solution $\tilde{Y}^{n+1}$  is defined as the solution of \eqref{eq:discDLReq3} minimizing $\|\tilde{Y}^{n+1} - Y^n\|_{\lhr}$. Note that minimizing $\|\tilde{Y}^{n+1} - Y^n\|_{\lhr}$ is equivalent to minimizing the norm $\|\tilde{Y}^{n+1}(\omega_k) - Y^n(\omega_k)\|_{\R^R}$ for every sample point $\omega_k,\, k=1,\dots,\hat{N}$, where $\|\cdot\|_{\R^R}^2 = \langle \cdot,\cdot\rangle_{\R^R}$ denotes the Euclidean scalar product in $\R^R$.
	
	In what follows we will exploit the fact that the vector space $\lhr$ is isomorphic to $\R^{\hat{N}}$. In particular, it holds that $(\tilde{Y}^{n+1} - Y^n)^\intercal\in\R^{R\times\hat{N}}$, where each column of $(\tilde{Y}^{n+1} - Y^n)^\intercal$ is given by $(\tilde{Y}^{n+1} - Y^n)(\omega_k),\,k=1,\dots,\hat{N}$. With a little abuse of notation, we use $\tilde{U}^{n+1}:\, \R^R\to V_h$ to denote a linear operator which takes real coefficients and returns the corresponding linear combination of the basis functions $\tilde{U}^{n+1}$. By $\tilde{U}^{{n+1}^\intercal}: V_h \to \R^R$ we denote its dual.
	
	\begin{lemma}\label{lemma:kernel}
		For any discrete solution $\tilde{Y}^{n+1}$ of equation \eqref{eq:discDLReq3} that minimizes the norm $\|\tilde{Y}^{n+1} - Y^n\|_{\lhr}$, it holds that every column of the increment $(\tilde{Y}^{n+1} - Y^n)^{\intercal}$ lies in the $\langle \cdot,\cdot\rangle_{\R^R}$-orthogonal complement of the kernel of $\tilde{M}^{n+1}$, i.e.
		\begin{gather*}
			(\tilde{Y}^{n+1} - Y^n)^{\intercal} \;\in\; \big(\kernel(\tilde{M}^{n+1})^\perp\big)^{\hat{N}},
		\end{gather*}
		where $\kernel(\tilde{M}^{n+1}) = \{ v\in \R^R:\, \tilde{M}^{n+1} v = 0 \}.$
	\end{lemma}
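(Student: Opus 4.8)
The plan is to show that minimizing $\|\tilde Y^{n+1}-Y^n\|_{\lhr}$ forces each column of the increment to avoid the kernel of $\tilde M^{n+1}$. First I would rewrite equation \eqref{eq:discDLReq3} columnwise: identifying $\lhr$ with $\R^{\hat N}$, for each sample point $\omega_k$ the increment column $z_k := (\tilde Y^{n+1}-Y^n)(\omega_k)\in\R^R$ satisfies a linear system of the form $z_k\,\tilde M^{n+1} = b_k$ (a row-vector equation), where $b_k$ is the $k$-th column of $-\triangle t\,\Pro_{\hat\rho,\mathcal Y^n}^\perp[(\mathcal L^*(u^n_{h,\hat\rho},u^{n+1}_{h,\hat\rho})-f^{n,n+1,*},\tilde U^{n+1})_{V'V}]^\intercal$. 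Transposing, and using that $\tilde M^{n+1}=\langle\tilde U^{{n+1}^\intercal},\tilde U^{n+1}\rangle_H$ is symmetric, this is a symmetric (generally rank-deficient) linear system $\tilde M^{n+1} z_k^\intercal = b_k^\intercal$.

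Next I would invoke the standard characterization of minimal-norm least-squares solutions of a symmetric linear system: the set of all solutions $z_k$ (when nonempty) is an affine subspace parallel to $\kernel(\tilde M^{n+1})$, and among these the unique one of minimal Euclidean norm is precisely the one lying in $\kernel(\tilde M^{n+1})^\perp$ (equivalently, in the range of $\tilde M^{n+1}$, which coincides with $\kernel(\tilde M^{n+1})^\perp$ by symmetry). Since minimizing $\|\tilde Y^{n+1}-Y^n\|_{\lhr}$ is equivalent to minimizing each $\|z_k\|_{\R^R}$ independently over the admissible set for that column — this equivalence is exactly the remark already made in the text that the $\lhr$-norm minimization decouples over sample points — each minimal-norm column $z_k$ lies in $\kernel(\tilde M^{n+1})^\perp$. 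Stacking the columns back gives $(\tilde Y^{n+1}-Y^n)^\intercal\in(\kernel(\tilde M^{n+1})^\perp)^{\hat N}$, which is the claim.

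The only mildly delicate point is bookkeeping with transposes and the fact that $\tilde M^{n+1}$ appears on the right in \eqref{eq:discDLReq3} acting on row vectors; I would make sure to state clearly that the right-hand side of \eqref{eq:semieq3}/\eqref{eq:discDLReq3}, namely $-\triangle t\,\Pro_{\hat\rho,\mathcal Y^n}^\perp[(\mathcal L^*-f^{*},\tilde U^{n+1})_{V'V}]$, does not enter the argument at all — the conclusion is purely about the structure of the solution set of the linear system and is independent of whether a solution exists. This is the main (and essentially only) conceptual step; the rest is linear algebra. I would also note in passing that $\kernel(\tilde M^{n+1})=\kernel(\tilde U^{n+1})$ as operators $\R^R\to V_h$, since $v^\intercal\tilde M^{n+1}v=\|\tilde U^{n+1}v\|_H^2$, which will be convenient later when relating rank-deficiency of $\tilde M^{n+1}$ to linear dependence of the modes $\tilde U^{n+1}$.
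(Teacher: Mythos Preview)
Your direct argument via the standard characterization of minimal-norm solutions is essentially the paper's contradiction argument rephrased, and it is correct for the explicit scheme. But there is a genuine gap in the general case: you write the equation as $\tilde M^{n+1} z_k^\intercal = b_k^\intercal$ and then treat $b_k$ as fixed data. In equation \eqref{eq:discDLReq3}, however, the term $\mathcal L^*(u^n_{h,\hat\rho}, u^{n+1}_{h,\hat\rho})$ involves $u^{n+1}_{h,\hat\rho} = \bar u^{n+1} + \tilde U^{n+1}\tilde Y^{{n+1}^\intercal}$, which depends on the unknown $\tilde Y^{n+1}$. Hence the solution set of \eqref{eq:discDLReq3} is not a priori the affine subspace $z_k + \kernel(\tilde M^{n+1})$ of your frozen linear system, and minimality among solutions of \eqref{eq:discDLReq3} does not immediately transfer to minimality among solutions of $\tilde M^{n+1} z_k^\intercal = b_k^\intercal$. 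Your claim that the right-hand side ``does not enter the argument at all'' is therefore where the argument breaks.

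The fix is precisely the observation you flag ``in passing'' as convenient only later: $\kernel(\tilde M^{n+1}) = \kernel(\tilde U^{n+1})$. This is exactly what the paper invokes at the key step. It guarantees that perturbing the increment by any $w\in\kernel(\tilde M^{n+1})$ leaves $\tilde U^{n+1}\tilde Y^{{n+1}^\intercal}$, hence $u^{n+1}_{h,\hat\rho}$, hence the right-hand side, unchanged; therefore every element of the affine subspace $z_k + \kernel(\tilde M^{n+1})$ is still a solution of \eqref{eq:discDLReq3}, and now your minimality argument goes through. So the kernel identity is not a side remark for later --- it is the one nontrivial ingredient that makes this lemma hold for the general (in particular implicit) operator evaluation.
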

	\begin{proof}
		\sloppy{Seeking a contradiction, let us suppose that $(\tilde{Y}^{n+1} - Y^n)^{\intercal} \notin \big(\kernel(\tilde{M}^{n+1})^\perp\big)^{\hat{N}}$.} Let
		\begin{equation}\label{eq:tmpeq12}
			Z^\intercal := \tilde{Y}^{{n+1}^\intercal} - \mathcal{P}_{\kernel(\tilde{M}^{n+1})}[\tilde{Y}^{{n+1}^\intercal} - Y^{n^\intercal}]
			\;\neq\; \tilde{Y}^{{n+1}},
		\end{equation}
		where $\mathcal{P}_{\kernel(\tilde{M}^{n+1})}[v]\in\R^{R\times\hat{N}}$ for  $v\in\R^{R\times\hat{N}}$ denotes the column-wise application of $\langle \cdot,\cdot\rangle_{\R^R}$-orthogonal projection onto the kernel of $\tilde{M}^{n+1}$. 
		Then, such constructed $Z$ satisfies
		\begin{multline*}
			\|(Z - Y^n)(\omega_k)\|_{\R^R} = \|\big(\tilde{Y}^{n+1} - Y^n - \mathcal{P}_{\kernel(\tilde{M}^{n+1})}[\tilde{Y}^{n+1} - Y^n]\big)(\omega_k)\|_{\R^R}  \\
			< \|(\tilde{Y}^{n+1} - Y^n)(\omega_k)\|_{\R^R},
		\end{multline*}
		and solves  \eqref{eq:discDLReq3}:
		\begin{align*}	
			\tilde{M}^{n+1}& (Z - Y^n)^\intercal 
			=  \tilde{M}^{n+1} (\tilde{Y}^{n+1} - Y^n)^\intercal \\
			&= -\triangle t \Pro_{\hat{\rho},\mathcal{Y}^n}^{\perp} \Big[ \big( \mathcal{L}^{*}(u^n_{h,\hat{\rho}}, \bar{u}^{n+1} + \tilde{U}^{n+1}\tilde{Y}^{{n+1}^\intercal}) - f^{{n,n+1}^*},\tilde{U}^{n+1} \big)_{V'V}\Big]\\
			&= -\triangle t \Pro_{\hat{\rho},\mathcal{Y}^n}^{\perp} \Big[ \big( \mathcal{L}^{*}(u^n_{h,\hat{\rho}}, \bar{u}^{n+1} + \tilde{U}^{n+1}Z^\intercal) - f^{{n,n+1}^*},\tilde{U}^{n+1} \big)_{V'V}\Big],
		\end{align*}
		where in the last step we used that $\kernel(\tilde{M}^{n+1}) = \kernel(\tilde{U}^{n+1})$.
		This leads to a contradiction that $\tilde{Y}^{n+1}$ was the  solution minimizing $\|\tilde{Y}^{n+1} - Y^n\|_{\lhr}$.
		\qed
	\end{proof}
	
	When showing the equivalence between the DLR variational formulation \eqref{eq:DLRvar} and the DLR system of equations \eqref{eq:DLReq1}--\eqref{eq:DLReq3} in the continuous setting, the DO condition \eqref{eq:DOcond} plays an important role. In an analogous way, the discrete DO condition (property 1 from Lemma \ref{lemma:discprop} for the full-rank case) plays an important role when showing the equivalence between the discrete DLR system of equations and the discrete DLR variational formulation.
	\begin{lemma}\label{lemma:discdo}
		Any discrete solution $\tilde{Y}^{n+1}$ of equation \eqref{eq:discDLReq3} which minimizes the norm $\|\tilde{Y}^{n+1} - Y^n\|_{\lhr}$, satisfies the discrete DO condition 
		\begin{equation}\label{eq:discDO}
			\Big\langle\Big(\frac{\tilde{Y}^{n+1} - Y^n}{\triangle t}\Big)^{\intercal}, Y^n\Big\rangle_{\lhr} = 0.
		\end{equation}
	\end{lemma}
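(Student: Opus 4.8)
The plan is to reproduce the full-rank computation from Lemma~\ref{lemma:discprop}(1) and then compensate for the non-invertibility of $\tilde M^{n+1}$ using Lemma~\ref{lemma:kernel}. First I would pair equation~\eqref{eq:discDLReq3} with each $Y^n_i$ in $\lhr$, i.e. multiply \eqref{eq:discDLReq3} from the left by $Y^{n^\intercal}$ and take the $\lhr$-scalar product entrywise. The contribution of the second summand vanishes, since every $Y^n_i$ belongs to $\mathcal{Y}^n=\spn\{Y^n_1,\dots,Y^n_R\}$ and is therefore $\lhr$-orthogonal to the range of $\Pro_{\hat\rho,\mathcal{Y}^n}^{\perp}$. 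As $\tilde M^{n+1}$ is a deterministic (constant) matrix it factors out of the pairing, so we are left with
\[
C\,\tilde M^{n+1} = 0, \qquad C := \Big\langle Y^{n^\intercal},\, \frac{\tilde Y^{n+1}-Y^n}{\triangle t}\Big\rangle_{\lhr} \in \R^{R\times R},
\]
and the asserted discrete DO condition \eqref{eq:discDO} is exactly the statement $C=0$ (up to a transpose). In the full-rank case one now just inverts $\tilde M^{n+1}$; here one cannot, so the task reduces to upgrading $C\,\tilde M^{n+1}=0$ to $C=0$.

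To do this I would show that every row of $C$ lies simultaneously in $\kernel(\tilde M^{n+1})$ and in $\kernel(\tilde M^{n+1})^\perp$. For the first inclusion: $\tilde M^{n+1}_{ij}=\langle \tilde U^{n+1}_i,\tilde U^{n+1}_j\rangle_H$ is a Gram matrix, hence symmetric, so transposing $C\,\tilde M^{n+1}=0$ gives $\tilde M^{n+1}C^\intercal=0$, i.e. the columns of $C^\intercal$, equivalently the rows of $C$, lie in $\kernel(\tilde M^{n+1})$. For the second inclusion: by definition the $i$-th row of $C$ equals $\frac{1}{\triangle t}\sum_{k=1}^{\hat N}\lambda_k\,Y^n_i(\omega_k)\,(\tilde Y^{n+1}-Y^n)(\omega_k)$, a real linear combination of the columns $(\tilde Y^{n+1}-Y^n)(\omega_k)$ of the increment matrix $(\tilde Y^{n+1}-Y^n)^\intercal$; by Lemma~\ref{lemma:kernel} each such column lies in $\kernel(\tilde M^{n+1})^\perp$, and this is a subspace, so the $i$-th row of $C$ lies in $\kernel(\tilde M^{n+1})^\perp$ too. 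Hence every row of $C$ lies in $\kernel(\tilde M^{n+1})\cap\kernel(\tilde M^{n+1})^\perp=\{0\}$, so $C=0$ and \eqref{eq:discDO} follows.

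I expect the only genuine obstacle to be bookkeeping rather than substance: one must keep straight that $\langle Y^{n^\intercal},\,\cdot\,\rangle_{\lhr}$ acts entrywise and produces the $R\times R$ matrix $C$, that the deterministic matrix $\tilde M^{n+1}$ really does pull out of the (bilinear, matrix-linear) pairing, and that the conclusion of Lemma~\ref{lemma:kernel} about \emph{columns} of $(\tilde Y^{n+1}-Y^n)^\intercal$ is correctly translated into a statement about the \emph{rows} of $C$. The conceptual core — and the place where the minimal-norm selection of $\tilde Y^{n+1}$ is essential, via Lemma~\ref{lemma:kernel} — is the orthogonality $\kernel(\tilde M^{n+1})\cap\kernel(\tilde M^{n+1})^\perp=\{0\}$ that combines the two inclusions; everything else is the same manipulation already carried out in the proof of Lemma~\ref{lemma:discprop}.
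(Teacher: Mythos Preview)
Your argument is correct and takes a somewhat different route from the paper. Both proofs start from Lemma~\ref{lemma:kernel}, but the paper then introduces the Moore--Penrose pseudoinverse $\tilde M^{{n+1}^+}$, writes the increment explicitly as $(\tilde Y^{n+1}-Y^n)^\intercal=-\triangle t\,\tilde M^{{n+1}^+}\Pro_{\hat\rho,\mathcal Y^n}^{\perp}[\cdots]^\intercal$, and observes that the right-hand side has columns in $\mathcal Y^{n^\perp}_{\hat\rho}$, so pairing with $Y^n$ gives zero directly. You instead stay with the original equation, obtain $C\tilde M^{n+1}=0$, and close with the kernel-intersection argument $\kernel(\tilde M^{n+1})\cap\kernel(\tilde M^{n+1})^\perp=\{0\}$, using symmetry of the Gram matrix to pass from $C\tilde M^{n+1}=0$ to rows of $C$ lying in the kernel. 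Your approach is slightly more elementary in that it never names the pseudoinverse; the paper's approach has the minor by-product of an explicit formula \eqref{eq:tmpeq9} for the minimizing $\tilde Y^{n+1}$, though that formula is not reused later. Conceptually the two arguments are equivalent: your second inclusion (rows of $C$ in $\kernel(\tilde M^{n+1})^\perp$) is exactly the content of Lemma~\ref{lemma:kernel}, and your first inclusion plays the same role as the paper's observation that the column space after applying $\tilde M^{{n+1}^+}$ stays inside $\mathcal Y^{n^\perp}_{\hat\rho}$.
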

	\begin{proof}
		Let $\tilde{Y}^{n+1}$ be a solution of \eqref{eq:discDLReq3} minimizing $\|\tilde{Y}^{n+1} - Y^n\|_{\lhr}$. Thanks to Lemma~\ref{lemma:kernel} we know that \[\big(\tilde{Y}^{n+1} - Y^n\big)^{\intercal} \in \big(\kernel(\tilde{M}^{n+1})^\perp\big)^{\hat{N}}.\]
		Now, let $\tilde{M}^{{n+1}^+}$ denote the pseudoinverse of $\tilde{M}^{n+1}$. Since \[\tilde{M}^{{n+1}^+} \tilde{M}^{n+1} v = v\] for any $v \in \kernel(\tilde{M}^{n+1})^\perp$, the solution $\tilde{Y}^{n+1}$ of equation \eqref{eq:discDLReq3} satisfies
		\begin{equation}\label{eq:tmpeq9}
			\tilde{Y}^{{n+1}^{\intercal}} = Y^{n^\intercal} - \triangle t
			\tilde{M}^{{n+1}^{+}} \Pro_{\hat{\rho},\mathcal{Y}^n}^{\perp} \Big[ \big( \mathcal{L}^{*}(u^n_{h,\hat{\rho}}, u^{n+1}_{h,\hat{\rho}}) - f^{{n,n+1}^*},\tilde{U}^{n+1} \big)^\intercal_{V'V}\Big].
		\end{equation}
		Thus, if we have
		\[
		\mathbb{E}_{\hat{\rho}}\Big[
		Y^{n^\intercal}
		\Big(\Pro_{\hat{\rho},\mathcal{Y}^n}^{\perp} \Big[ \big( \mathcal{L}^{*}(u^n_{h,\hat{\rho}}, u^{n+1}_{h,\hat{\rho}}) - f^{{n,n+1}^*},\tilde{U}^{n+1} \big)_{V'V}\Big] \tilde{M}^{{n+1}^+} \Big)
		\Big]=0,
		\]
		then the statement will follow. 
		\sloppy{But for the column space of $\Pro_{\hat{\rho},\mathcal{Y}^n}^{\perp} \Big[ \big( \mathcal{L}^{*}(u^n_{h,\hat{\rho}}, u^{n+1}_{h,\hat{\rho}}) - f^{{n,n+1}^*},\tilde{U}^{n+1} \big)_{V'V}\Big] \tilde{M}^{{n+1}^+} \in \R^{\hat{N}\times R}$ it holds}
		\begin{multline*}
			\spn\Big\{\Pro_{\hat{\rho},\mathcal{Y}^n}^{\perp} \Big[ \big( \mathcal{L}^{*}(u^n_{h,\hat{\rho}}, u^{n+1}_{h,\hat{\rho}}) - f^{{n,n+1}^*},\tilde{U}^{n+1} \big)_{V'V}\Big] \tilde{M}^{{n+1}^+}\Big\} \\
			\subset \spn \Big\{\Pro_{\hat{\rho},\mathcal{Y}^n}^{\perp} \Big[ \big( \mathcal{L}^{*}(u^n_{h,\hat{\rho}}, u^{n+1}_{h,\hat{\rho}}) - f^{{n,n+1}^*},\tilde{U}^{n+1} \big)_{V'V}\Big]\Big\} \subset \mathcal{Y}^{n^\perp}_{\hat{\rho}}
		\end{multline*}
		with $\mathcal{Y}^{n^\perp}_{\hat{\rho}}\subset\R^{\hat{N}}$ being the orthogonal complement to $\mathcal{Y}^n$ in the scalar product $\langle\cdot,\cdot\rangle_{\lhr}$. Now the proof is complete. 
		\qed
	\end{proof}
	
	In the following lemma we address the question of existence of a unique solution when applying the explicit and semi-implicit scheme.
	\begin{lemma}\label{lemma:existence}
		For the explicit and semi-implicit scheme, as described in Section~\ref{sec:fullydiscpr}, there exists a unique discrete solution $\tilde{Y}^{n+1}$ of equation \eqref{eq:discDLReq3} minimizing the norm $\|\tilde{Y}^{n+1} - Y^n\|_{\lhr}$.
	\end{lemma}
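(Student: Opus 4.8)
The plan is to recast step~3 of the scheme as $\hat{N}$ decoupled finite-dimensional linear systems sharing a common symmetric positive semi-definite matrix, to check that each of them is consistent, and then to invoke the elementary theory of minimal-norm solutions of linear systems.

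First I would observe that, by the time step~3 is executed, $\bar{u}^{n+1}$ and $\tilde{U}^{n+1}$ are already fixed quantities: for the explicit scheme this is immediate from \eqref{eq:discDLReq1}--\eqref{eq:discDLReq2}, while for the semi-implicit scheme it follows from Lemma~\ref{lemma:semiimplscheme}, since \eqref{eq:semieq1}--\eqref{eq:semieq2} are uniquely solvable elliptic problems (mass plus coercive stiffness) whose data depend only on $u^n_{h,\hat{\rho}}$, $Y^n$ and $f$. Hence \eqref{eq:discDLReq3} in the explicit case, respectively its equivalent form \eqref{eq:semieq3} in the semi-implicit case, reads
\begin{equation*}
(\tilde{Y}^{n+1}-Y^n)\,A \;=\; C\qquad\text{in }\lhr,
\end{equation*}
with $A=\tilde{M}^{n+1}$ in the explicit case and $A=\tilde{M}^{n+1}+\triangle t\,\langle\tilde{U}^{{n+1}^\intercal},\tilde{U}^{n+1}\rangle_{\Ldet}$ in the semi-implicit case, and with $C\in(\lhr)^R$ a fixed datum not depending on $\tilde{Y}^{n+1}$ (the right-hand side of \eqref{eq:discDLReq3}, resp.\ \eqref{eq:semieq3}). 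Evaluating at the sample points and transposing, this is equivalent to the $\hat{N}$ independent systems $A\,x_k=c_k$, where $x_k:=(\tilde{Y}^{n+1}-Y^n)(\omega_k)^\intercal\in\R^R$ and $c_k:=C(\omega_k)^\intercal$, $k=1,\dots,\hat{N}$. Since $\|\tilde{Y}^{n+1}-Y^n\|_{\lhr}^2=\sum_{k}\lambda_k\|x_k\|_{\R^R}^2$, minimising $\|\tilde{Y}^{n+1}-Y^n\|_{\lhr}$ amounts to minimising each $\|x_k\|_{\R^R}$ separately.

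Two structural facts then finish the argument. First, $A$ is symmetric and positive semi-definite with $\kernel(A)=\kernel(\tilde{U}^{n+1})$: indeed $v^\intercal\tilde{M}^{n+1}v=\|\tilde{U}^{n+1}v\|_H^2$ and, by \eqref{eq:coerc_const}, $v^\intercal\langle\tilde{U}^{{n+1}^\intercal},\tilde{U}^{n+1}\rangle_{\Ldet}v=\langle\tilde{U}^{n+1}v,\tilde{U}^{n+1}v\rangle_{\Ldet}\ge\Ccoerc\|\tilde{U}^{n+1}v\|_V^2\ge0$, so $v^\intercal Av=0$ forces $\tilde{U}^{n+1}v=0$, while $\tilde{U}^{n+1}v=0$ clearly gives $Av=0$; therefore $\operatorname{range}(A)=\kernel(A)^\perp=\kernel(\tilde{U}^{n+1})^\perp$. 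Second, each $c_k$ lies in $\operatorname{range}(A)$: writing $g$ for $\mathcal{L}^*(u^n_{h,\hat{\rho}})-f^{{n,n+1}^*}$ in the explicit case and for $\Lstoch^*(u^n_{h,\hat{\rho}})-f^{{n,n+1}^*}$ in the semi-implicit case, and using that contraction with a constant vector commutes with the component-wise projection $\Pro_{\hat{\rho},\mathcal{Y}^n}^\perp$, one obtains for every $v\in\kernel(\tilde{U}^{n+1})$
\begin{equation*}
\langle c_k,v\rangle_{\R^R}=-\triangle t\,\Pro_{\hat{\rho},\mathcal{Y}^n}^\perp\big[(g,\tilde{U}^{n+1}v)_{V'V}\big](\omega_k)=-\triangle t\,\Pro_{\hat{\rho},\mathcal{Y}^n}^\perp[0](\omega_k)=0,
\end{equation*}
so $c_k\perp\kernel(A)$, i.e.\ $c_k\in\operatorname{range}(A)$. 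Hence each system $A\,x_k=c_k$ is consistent, its solution set is a non-empty affine subspace $x_k^{(0)}+\kernel(A)$ of $\R^R$, and it contains exactly one element of minimal Euclidean norm, namely $A^{+}c_k$ (the unique solution orthogonal to $\kernel(A)$). Setting $\tilde{Y}^{n+1}(\omega_k):=Y^n(\omega_k)+(A^{+}c_k)^\intercal$ then produces a solution of \eqref{eq:discDLReq3} (equivalently of \eqref{eq:semieq3} in the semi-implicit case, by Lemma~\ref{lemma:semiimplscheme}) that minimises $\|\tilde{Y}^{n+1}-Y^n\|_{\lhr}$; conversely, by Lemma~\ref{lemma:kernel} every minimiser has each increment column in $\kernel(\tilde{M}^{n+1})^\perp=\kernel(\tilde{U}^{n+1})^\perp$, hence must coincide column-wise with $A^{+}c_k$, which yields uniqueness.

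The only step that is not completely routine is the \emph{consistency} check: one must verify that adding the coercive $\Ldet$-term does not enlarge the kernel (so that $\kernel(A)=\kernel(\tilde{U}^{n+1})$ in both schemes) and that pairing the right-hand side with any element of $\kernel(\tilde{U}^{n+1})$ annihilates it even after the projection $\Pro_{\hat{\rho},\mathcal{Y}^n}^\perp$; both reduce to the identity $\kernel(\tilde{M}^{n+1})=\kernel(\tilde{U}^{n+1})$ combined with the linearity of $\Pro_{\hat{\rho},\mathcal{Y}^n}^\perp$.
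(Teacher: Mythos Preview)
Your overall strategy coincides with the paper's: rewrite step~3 as a linear system $B\,(\tilde{Y}^{n+1}-Y^n)^\intercal=\text{RHS}$ with a symmetric positive semi-definite $B$, observe that $\kernel(B)=\kernel(\tilde{U}^{n+1})$, check that $\text{RHS}$ lies in the range of $\tilde{U}^{{n+1}^\intercal}=\text{range}(B)$, and conclude via the pseudoinverse. The explicit case is handled correctly.

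There is, however, a genuine gap in the semi-implicit case. You invoke Lemma~\ref{lemma:semiimplscheme} to replace \eqref{eq:discDLReq3} by ``its equivalent form \eqref{eq:semieq3}'', but the derivation of \eqref{eq:semieq3} in that lemma uses the identity $\mathbb{E}_{\hat{\rho}}[\tilde{Y}^{{n+1}^\intercal}Y^n]=\mathrm{Id}$ (the simplification of $T_6$), i.e.\ the discrete DO condition. In the rank-deficient setting this condition is not known a priori for an arbitrary solution; it is established only for minimising solutions (Lemma~\ref{lemma:discdo}), and that lemma refers to \eqref{eq:discDLReq3}, not to \eqref{eq:semieq3}. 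So the element you construct, namely the minimising solution of \eqref{eq:semieq3}, is not automatically a solution of \eqref{eq:discDLReq3}, and your final sentence ``produces a solution of \eqref{eq:discDLReq3} \dots\ by Lemma~\ref{lemma:semiimplscheme}'' is not yet justified. The paper closes exactly this loop: after building the minimising solution of \eqref{eq:semieq3} it \emph{verifies} the discrete DO condition for it (repeating the argument of Lemma~\ref{lemma:discdo} with $B=\tilde{M}^{n+1}+\triangle t\langle\tilde{U}^{{n+1}^\intercal},\tilde{U}^{n+1}\rangle_{\Ldet}$ in place of $\tilde{M}^{n+1}$), and only then concludes that it solves \eqref{eq:discDLReq3}.

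The fix is short and fits naturally into your write-up: since $A^{+}c_k\in\kernel(A)^\perp$ and the columns of $C$ lie in the image of $\Pro_{\hat{\rho},\mathcal{Y}^n}^{\perp}$, the same two-line computation as in Lemma~\ref{lemma:discdo} (with $A$ replacing $\tilde{M}^{n+1}$) yields $\langle(\tilde{Y}^{n+1}-Y^n)^\intercal,Y^n\rangle_{\lhr}=0$. Once DO is in hand, Lemma~\ref{lemma:semiimplscheme} gives the passage back to \eqref{eq:discDLReq3}, and your uniqueness argument via Lemma~\ref{lemma:kernel} then goes through.
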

	\begin{proof}
		We will start with the semi-implicit scheme. By virtue of Lemma~\ref{lemma:semiimplscheme}, under the discrete DO condition \eqref{eq:discDO}, applying the semi-implicit scheme to equation \eqref{eq:discDLReq3} is equivalent to solving equation \eqref{eq:semieq3}. We will first focus our attention to  equation \eqref{eq:semieq3} and show that there exists a unique solution minimizing $\|\tilde{Y}^{n+1} - Y^n\|_{\lhr}$. This solution will satisfy the discrete DO and consequently is a unique minimizing solution of \eqref{eq:discDLReq3}.
		Equation \eqref{eq:semieq3} can be rewritten as
		\begin{equation}\label{eq:tmpeq10}
			B \,(\tilde{Y}^{n+1} - Y^n)^\intercal = \text{RHS}\hspace{1cm}\text{in }\lhr,
		\end{equation}
		where
		\begin{align*}
			B &= \tilde{M}^{n+1} + \triangle t\langle\tilde{U}^{{n+1}^\intercal}, \tilde{U}^{n+1}\rangle_{\Ldet},\\
			\text{RHS} &= -\triangle t \big(\tilde{U}^{{n+1}^\intercal}, \Pro_{\hat{\rho},\mathcal{Y}^n}^{\perp} [\Lstoch^{*}(u^n_{h,\hat{\rho}})-f^{{n,n+1}^{*}}]\big)_{V V'}.
		\end{align*}
		Since $\text{RHS}$ above lies in the range of $\tilde{U}^{{n+1}^\intercal}$, which is the same as the range of $B$, a solution of \eqref{eq:tmpeq10} exists. 
		Moreover, since the matrix $B$ is positive definite on the space $\kernel(B)^\perp$, any solution can be expressed as $(\tilde{Y}^{n+1} - Y^n + W)^{\intercal}$ with $W^{\intercal}\in \big(\kernel(B)\big)^{\hat{N}}$ and a unique  $\tilde{Y}^{n+1^{\intercal}}\in \R^{R\times \hat{N}}$ such that $(\tilde{Y}^{n+1} - Y^n)^{\intercal}\in \big(\kernel(B)^\perp\big)^{\hat{N}}$.
		The solution $\tilde{Y}^{n+1}$ minimizes each column $\|(\tilde{Y}^{n+1} - Y^n)(\omega_k)\|_{\R^R}$, $k=1,\dots,\hat{N}$ and thus it is the unique solution of \eqref{eq:tmpeq10} that minimizes
		norm $\|\tilde{Y}^{n+1} - Y^n\|_{\lhr}$.
		We observe that the established solution $\tilde{Y}^{n+1}$ of equation \eqref{eq:tmpeq10} satisfies the discrete DO condition \eqref{eq:discDO}. The argument is analogous to the proof of Lemma~\ref{lemma:discdo}, but instead of $\tilde{M}^{n+1}$ here we take ${B}$. 
		Therefore, the statement for the semi-implicit scheme follows.
		The explicit case can be shown by following analogous steps with 
		\begin{align*}
			B &= \tilde{M}^{n+1},\\
			\text{RHS} &= -\triangle t \big(\tilde{U}^{{n+1}^\intercal}, \Pro_{\hat{\rho},\mathcal{Y}^n}^{\perp} [\mathcal{L}^{*}(u^n_{h,\hat{\rho}})-f^{{n,n+1}^{*}}]\big)_{V V'}.
		\end{align*}
		\qed
	\end{proof}
	Now we can proceed with showing the discrete variational formulation. It is not generally easy to deal with the notion of a tangent space at a certain point on the manifold in the rank-deficient case. In the following theorem we will, however, show that an analogous discrete variational formulation holds. Given $ U\in (V_h)^R$ and $Y\in(\lhrz)^R$, we define the vector space $\mathcal{T}_{UY^\intercal}$ as 
	\begin{multline*}
		\mathcal{T}_{UY^\intercal} = \Big\{  \delta v\in V_h\otimes\lhrz\,| \;\delta v =  \sum_{i=1}^R U_i\delta Y_i + \delta U_i Y_i\\ 
		\delta U_i\in V_h,\;\delta Y_i\in\lhrz,\;\langle \delta Y_i,Y_j\rangle_{\lhr} = 0 \quad\forall i,j = 1,\dots,R \Big\}.
	\end{multline*}
	It is easy to verify that, analogously to Lemma \ref{lemma:unun+1}, the (possibly rank-deficient) discrete DLR solutions $u^n_{h,\hat{\rho}}$ and $u^{n+1}_{h,\hat{\rho}}$ at times $t_n, t_{n+1}$, as defined in Algorithm~\ref{alg:ourscheme} satisfy 
	\begin{equation}\label{rem:unun+1}
		u^n_{h,\hat{\rho}} \in \mathcal{T}_{\tilde{U}^{n+1}Y^{n^\intercal}},\qquad u^{n+1}_{h,\hat{\rho}} \in  \mathcal{T}_{\tilde{U}^{n+1}Y^{n^\intercal}}.
	\end{equation}

	\begin{theorem}\label{th:discvarformrankdef}
		Let $u^n_{h,\hat{\rho}}$ and $u^{n+1}_{h,\hat{\rho}}$ be the (possibly rank-deficient) discrete DLR solution at times $t_n$, $t_{n+1}$, respectively, $n=0,\dots,N-1$, as defined in Algorithm~\ref{alg:ourscheme}. Then the following  variational formulation holds
		\begin{align}\label{eq:discDLRvarrankdef}
			\begin{split}
				&\hspace{0cm}\big\langle \frac{u^{n+1}_{h,\hat{\rho}} - u^n_{h,\hat{\rho}}}{\triangle t},\, v_h\big\rangle_{H,\lhr} + \big(\mathcal{L}(u^n_{h,\hat{\rho}},u^{n+1}_{h,\hat{\rho}}),\, v_h  \big)_{V'V,\lhr} = \big\langle f^{n,n+1},\,v_h\big\rangle_{H,\lhr},\\
				&\hspace{3cm} \forall v_h = \bar{v}_h + v_h^* \text{ with } \bar{v}_h\in V_h \text{ and }\;v_h^*\in \mathcal{T}_{\tilde{U}^{n+1}Y^{n^\intercal}}.
			\end{split}
		\end{align}
	\end{theorem}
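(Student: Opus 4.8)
The plan is to decompose the test function as $v_h=\bar v_h+v_h^*$, $\bar v_h\in V_h$, $v_h^*\in\mathcal{T}_{\tilde{U}^{n+1}Y^{n^\intercal}}$, and to show that, after integrating in $\hat{\rho}$, the identity \eqref{eq:discDLRvarrankdef} decouples into a \emph{mean} part and a \emph{zero-mean} part. Since $\mathbb{E}_{\hat{\rho}}[v_h^*]=0$, $\mathbb{E}_{\hat{\rho}}[(u^{n+1}_{h,\hat{\rho}}-u^n_{h,\hat{\rho}})^{*}]=0$ and $\mathbb{E}_{\hat{\rho}}[\mathcal{L}^*(u^n_{h,\hat{\rho}},u^{n+1}_{h,\hat{\rho}})]=\mathbb{E}_{\hat{\rho}}[f^{{n,n+1}^{*}}]=0$, every cross term between the mean and zero-mean components vanishes: $\bar v_h$ is paired only with $\tfrac{\bar u^{n+1}-\bar u^n}{\triangle t}$ and $\mathbb{E}_{\hat{\rho}}[\mathcal{L}(u^n_{h,\hat{\rho}},u^{n+1}_{h,\hat{\rho}})-f^{n,n+1}]$, and the contribution of $\bar v_h$ to \eqref{eq:discDLRvarrankdef} is exactly equation \eqref{eq:discDLReq1}. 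Hence it remains to establish the zero-mean identity
\[
\Big\langle\tfrac{(u^{n+1}_{h,\hat{\rho}}-u^n_{h,\hat{\rho}})^{*}}{\triangle t},v_h^*\Big\rangle_{H,\lhr}+\big(\mathcal{L}^*(u^n_{h,\hat{\rho}},u^{n+1}_{h,\hat{\rho}})-f^{{n,n+1}^{*}},v_h^*\big)_{V'V,\lhr}=0\qquad\forall\,v_h^*\in\mathcal{T}_{\tilde{U}^{n+1}Y^{n^\intercal}}.
\]

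Writing $v_h^*=\tilde{U}^{n+1}\delta Y^\intercal+\delta U\,Y^{n^\intercal}$ with $\delta U_i\in V_h$, $\delta Y_i\in\lhrz$ and $\langle\delta Y_i,Y^n_j\rangle_{\lhr}=0$, I would test the two summands separately. For the $\delta U\,Y^{n^\intercal}$ summand: take $v_h=\delta U_j$ in \eqref{eq:discDLReq2} and sum over $j$; using the discrete DO condition of Lemma~\ref{lemma:discdo} (equivalently $\langle\tilde{Y}^{{n+1}^\intercal},Y^n\rangle_{\lhr}=\mathrm{Id}$) together with $u^{n+1,*}_{h,\hat{\rho}}=\tilde{U}^{n+1}\tilde{Y}^{{n+1}^\intercal}$ and $u^{n,*}_{h,\hat{\rho}}=U^nY^{n^\intercal}$, one gets $\mathbb{E}_{\hat{\rho}}[(u^{n+1}_{h,\hat{\rho}}-u^n_{h,\hat{\rho}})^{*}Y^n_j]=\tilde{U}^{n+1}_j-U^n_j$, so the first term of the sum becomes $\langle\tfrac{(u^{n+1}_{h,\hat{\rho}}-u^n_{h,\hat{\rho}})^{*}}{\triangle t},\delta U\,Y^{n^\intercal}\rangle_{H,\lhr}$, while the second, after replacing $\mathcal{L}-f^{n,n+1}$ by $\mathcal{L}^*-f^{{n,n+1}^{*}}$ (legitimate since $Y^n$ is $\hat{\rho}$-zero-mean), becomes $(\mathcal{L}^*-f^{{n,n+1}^{*}},\delta U\,Y^{n^\intercal})_{V'V,\lhr}$. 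For the $\tilde{U}^{n+1}\delta Y^\intercal$ summand: take the $\lhr$-inner product of the $j$-th component of \eqref{eq:discDLReq3} with $\delta Y_j$ and sum over $j$; since $\delta Y_j\perp\mathcal{Y}^n$ in $\lhr$, the operator $\Pro_{\hat{\rho},\mathcal{Y}^n}^{\perp}$ is transparent and the second term of \eqref{eq:discDLReq3} contributes $(\mathcal{L}^*-f^{{n,n+1}^{*}},\tilde{U}^{n+1}\delta Y^\intercal)_{V'V,\lhr}$, while the term $\tfrac{\tilde{Y}^{n+1}-Y^n}{\triangle t}\tilde{M}^{n+1}$ contributes $\sum_{i,j}\tfrac{1}{\triangle t}\tilde{M}^{n+1}_{ij}\langle\tilde{Y}^{n+1}_i-Y^n_i,\delta Y_j\rangle_{\lhr}$, which one identifies with $\langle\tfrac{(u^{n+1}_{h,\hat{\rho}}-u^n_{h,\hat{\rho}})^{*}}{\triangle t},\tilde{U}^{n+1}\delta Y^\intercal\rangle_{H,\lhr}$ upon noting that in the expansion of $\langle u^{n,*}_{h,\hat{\rho}},\tilde{U}^{n+1}_j\rangle_H$ the terms of the form $\langle U^n_i-\tilde{U}^{n+1}_i,\tilde{U}^{n+1}_j\rangle_H\langle Y^n_i,\delta Y_j\rangle_{\lhr}$ drop out because $\langle\delta Y_j,Y^n_i\rangle_{\lhr}=0$. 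Adding the two summands gives the zero-mean identity, and hence \eqref{eq:discDLRvarrankdef}.

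The main obstacle is this last identification in the $\tilde{U}^{n+1}\delta Y^\intercal$ summand: because $\tilde{M}^{n+1}$ may be singular one cannot, as in the full-rank proof of Theorem~\ref{th:discvarform}, absorb it into an $H$-projector through $(\tilde{M}^{n+1})^{-1}$, and one must instead argue directly, exploiting the orthogonality encoded in $\mathcal{T}_{\tilde{U}^{n+1}Y^{n^\intercal}}$ and the discrete DO condition. The two ingredients that make this go through are already available in the excerpt: $(u^{n+1}_{h,\hat{\rho}}-u^n_{h,\hat{\rho}})^{*}\in\mathcal{T}_{\tilde{U}^{n+1}Y^{n^\intercal}}$ (this is \eqref{rem:unun+1}, resting on Lemma~\ref{lemma:discdo}), and $u^{n+1,*}_{h,\hat{\rho}}=\tilde{U}^{n+1}\tilde{Y}^{{n+1}^\intercal}$ (the reorthonormalization \eqref{eq:discDLReq4} leaves the zero-mean field unchanged). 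An alternative route that parallels Theorem~\ref{th:discvarform} more closely is to define $\Pi^{h,\hat{\rho}}_{\tilde{U}^{n+1}Y^{n^\intercal}}$ with the pseudoinverse $\tilde{M}^{{n+1}^{+}}$ in place of $(\tilde{M}^{n+1})^{-1}$ and to replay the proofs of Lemma~\ref{lemma:DLRprojdisc} and Theorem~\ref{th:discvarform} verbatim; the only delicate point there is that the leading term still reproduces $\tfrac{u^{n+1,*}_{h,\hat{\rho}}-\tilde{U}^{n+1}Y^{n^\intercal}}{\triangle t}$, which holds because $\tilde{M}^{n+1}\tilde{M}^{{n+1}^{+}}=\mathcal{P}_{\kernel(\tilde{M}^{n+1})^{\perp}}$ acts as the identity on the columns of $(\tilde{Y}^{n+1}-Y^n)^\intercal$ by Lemma~\ref{lemma:kernel}, and on $\tilde{U}^{{n+1}^\intercal}$ since $\kernel(\tilde{M}^{n+1})=\kernel(\tilde{U}^{n+1})$.
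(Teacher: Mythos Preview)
Your proposal is correct and follows the same overall architecture as the paper: split into mean and zero-mean parts, then test the zero-mean identity separately against $\delta U\,Y^{n^\intercal}$ (via \eqref{eq:discDLReq2}) and against $\tilde{U}^{n+1}\delta Y^\intercal$ (via \eqref{eq:discDLReq3}), invoking the discrete DO condition of Lemma~\ref{lemma:discdo} throughout. The identification you flag as the ``main obstacle'' indeed goes through exactly as you describe, since $\langle U^nY^{n^\intercal},\tilde{U}^{n+1}\delta Y^\intercal\rangle_{H,\lhr}=0$ and $\langle \tilde{U}^{n+1}Y^{n^\intercal},\tilde{U}^{n+1}\delta Y^\intercal\rangle_{H,\lhr}=0$ by the orthogonality $\delta Y\perp\mathcal{Y}^n$.

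There is one tactical difference worth noting. For the $\tilde{U}^{n+1}\delta Y^\intercal$ summand, the paper does \emph{not} test \eqref{eq:discDLReq3} against $\delta Y$ and exploit $\delta Y\perp\mathcal{Y}^n$. Instead it first tests \eqref{eq:discDLReq2} with $v_h=\tilde{U}^{n+1}_j$ to obtain $\big(\mathbb{E}_{\hat\rho}[(\mathcal{L}^*-f^{*})Y^n],\tilde{U}^{n+1}\big)_{V'V}=\tfrac{1}{\triangle t}(\langle U^{n^\intercal},\tilde{U}^{n+1}\rangle_H-\tilde{M}^{n+1})$, then expands $\Pro_{\hat\rho,\mathcal{Y}^n}^{\perp}$ in \eqref{eq:discDLReq3} and substitutes this identity in, arriving at the \emph{pointwise} equality
\[
\Big\langle\frac{(u^{n+1}_{h,\hat\rho}-u^n_{h,\hat\rho})^*}{\triangle t},\tilde{U}^{n+1}\Big\rangle_H+\big(\mathcal{L}^*-f^{*},\tilde{U}^{n+1}\big)_{V'V}=0\quad\text{in }\lhr,
\]
which can then be paired with \emph{any} $\delta Y\in(\lhrz)^R$. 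Your route is more direct and needs only $\delta Y\perp\mathcal{Y}^n$, which already suffices for $\mathcal{T}_{\tilde{U}^{n+1}Y^{n^\intercal}}$; the paper's route yields a slightly stronger intermediate identity at the cost of the extra substitution step. Your alternative suggestion via the pseudoinverse $\tilde{M}^{{n+1}^+}$ together with Lemma~\ref{lemma:kernel} is also valid, but the paper avoids it.
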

	\begin{proof}
		First, consider equation \eqref{eq:discDLReq2} with $v_h = \tilde{U}^{n+1}_j$. Summing over $j$ results in 
		\begin{equation}\label{eq:tmpeq6}
			\big(\mathbb{E}_{\hat{\rho}}[(\mathcal{L}^*(u^n_{h,\hat{\rho}},u^{n+1}_{h,\hat{\rho}})-f^{{n,n+1}^*})Y^n], \tilde{U}^{n+1}\big)_{V'V} = \frac{1}{\triangle t}\Big(\langle U^{n^\intercal},\tilde{U}^{{n+1}}\rangle_H -  \tilde{M}^{n+1} \Big).
		\end{equation}
		Let us proceed with the equation \eqref{eq:discDLReq3}:
		\begin{align*}
			0 &= \frac{\tilde{Y}^{n+1} - Y^n}{\triangle t} \tilde{M}^{n+1} + \Pro_{\hat{\rho},\mathcal{Y}^n}^{\perp} \Big[ \big( \mathcal{L}^{*}(u^n_{h,\hat{\rho}}, u^{n+1}_{h,\hat{\rho}}) - f^{{n,n+1}^*},\tilde{U}^{n+1} \big)_{V'V}\Big]\\
			& =  \frac{\tilde{Y}^{n+1} - Y^n}{\triangle t} \tilde{M}^{n+1} + \big( \mathcal{L}^{*}(u^n_{h,\hat{\rho}}, u^{n+1}_{h,\hat{\rho}}) - f^{{n,n+1}^*},\tilde{U}^{n+1} \big)_{V'V} \\
			&\hspace{3cm}-Y^n \big(\mathbb{E}_{\hat{\rho}}[(\mathcal{L}^*(u^n_{h,\hat{\rho}}u^{n+1}_{h,\hat{\rho}})-f^{{n,n+1}^*})Y^{n^\intercal}] ,\tilde{U}^{n+1}\big)_{V'V}\\
			& = \frac{\tilde{Y}^{n+1}\langle \tilde{U}^{{n+1}^\intercal},\tilde{U}^{{n+1}}\rangle_H - {Y}^{n}\tilde{M}^{n+1} + Y^n \tilde{M}^{n+1} - Y^n \langle U^{n^\intercal},\tilde{U}^{{n+1}}\rangle_H}{\triangle t}\\
			&\hspace{3cm}+\big( \mathcal{L}^{*}(u^n_{h,\hat{\rho}}, u^{n+1}_{h,\hat{\rho}}) - f^{{n,n+1}^*},\tilde{U}^{n+1} \big)_{V'V} \\
			&= \Big\langle \frac{(u^{n+1}_{h,\hat{\rho}} - u^n_{h,\hat{\rho}})^*}{\triangle t},\, \tilde{U}^{n+1} \Big\rangle_H+ \Big( \mathcal{L}^{*}(u^n_{h,\hat{\rho}}, u^{n+1}_{h,\hat{\rho}}) - f^{{n,n+1}^*},\, \tilde{U}^{n+1} \Big)_{V'V}
		\end{align*}
		Taking a weak formulation in $\lhrz$ results in 
		\begin{multline}\label{eq:tmpeq7}
			\Big\langle \frac{(u^{n+1}_{h,\hat{\rho}} - u^n_{h,\hat{\rho}})^*}{\triangle t},\, w_h \Big\rangle_{H,\lhr}+ \Big( \mathcal{L}^{*}(u^n_{h,\hat{\rho}}, u^{n+1}_{h,\hat{\rho}}) - f^{{n,n+1}^*},\, w_h \Big)_{V'V,\lhr}=0\\
			\forall w_h = \tilde{U}^{n+1}\delta Y^\intercal,\; \delta Y\in (\lhrz)^R.
		\end{multline}
		Concerning equation \eqref{eq:discDLReq2}, we proceed as follows: $\forall v_h \in (V_h)^R$
		\begin{align}\label{eq:tmpeq8}
			0 &= \Big\langle \frac{\tilde{U}^{n+1} - U^n}{\triangle t}, v_h\Big\rangle_H + \Big(\mathbb{E}\hr[(\mathcal{L}(u^n_{h,\hat{\rho}}, u^{n+1}_{h,\hat{\rho}})-f^{n,n+1}) Y^n], v_h \Big)_{V' V}\nonumber\\
			&= \Big\langle \frac{\tilde{U}^{n+1}\mathbb{E}_{\hat{\rho}}[\tilde{Y}^{{n+1}^\intercal}Y^n] - U^n\mathbb{E}_{\hat{\rho}}[Y^{{n}^\intercal}Y^n]}{\triangle t}, v_h\Big\rangle_H\nonumber\\
			&\hspace{3cm}+ \Big(\mathbb{E}\hr[(\mathcal{L}(u^n_{h,\hat{\rho}}, u^{n+1}_{h,\hat{\rho}})-f^{n,n+1}) Y^n], v_h \Big)_{V' V}\nonumber\\
			&= \Big\langle \frac{(u^{n+1}_{h,\hat{\rho}} - u^n_{h,\hat{\rho}})^*}{\triangle t},\, v_h Y^{n^\intercal} \Big\rangle_{H,\lhr}\nonumber\\
			&\hspace{1cm} + \Big( \mathcal{L}^{*}(u^n_{h,\hat{\rho}}, u^{n+1}_{h,\hat{\rho}}) - f^{{n,n+1}^*},\, v_h Y^{n^\intercal} \Big)_{V'V,\lhr}\quad \forall v_h\in (V_h)^R,
		\end{align}
		where in the second step we applied $\mathbb{E}_{\hat{\rho}}[\tilde{Y}^{{n+1}^\intercal}Y^n] = \text{Id}$ which holds thanks to the discrete DO condition from Lemma \ref{lemma:discdo}. Summing equation \eqref{eq:tmpeq7} and \eqref{eq:tmpeq8} we obtain
		\begin{multline*}\Big\langle \frac{(u^{n+1}_{h,\hat{\rho}} - u^n_{h,\hat{\rho}})^*}{\triangle t},\, w_h \Big\rangle_{H,\lhr}+ \Big( \mathcal{L}^{*}(u^n_{h,\hat{\rho}}, u^{n+1}_{h,\hat{\rho}}) - f^{{n,n+1}^*},\, w_h \Big)_{V'V,\lhr}=0\\
			\forall w_h \in \mathcal{T}_{\tilde{U}^{n+1}Y^{n^\intercal}}.
		\end{multline*}
		The rest of the proof follows the same steps as in the proof of Theorem~\ref{th:discvarform}, i.e.\ summing the mean value equation \eqref{eq:discDLReq1} and noting that some terms vanish.
		\qed
	\end{proof}
	
	\corr{
		\begin{remark}
			Thanks to the observation that $\kernel(\tilde{M}^{n+1}) = \kernel (\tilde{U}^{n+1})$, we can easily see that any discrete solution $\tilde{Y}^{n+1}$ of equation~\eqref{eq:discDLReq3} leads to the same discrete DLR solution $u^{n+1}_{h,\hat{\rho}} = \bar{u}^{n+1}_{h,\hat{\rho}} + \tilde{U}^{n+1}\tilde{Y}^{{n+1}^\intercal}$. Therefore, the result of the preceding theorem as well as the stability properties shown in Section~\ref{sec:stabest} hold for the discrete DLR solution obtained by any of the solutions of equation \eqref{eq:discDLReq3}.
		\end{remark}
	}
	
	\subsection{Reinterpretation as a projector-splitting scheme}\label{sec:projsplit}
	
	The proposed Algorithm~\ref{alg:ourscheme} was derived from the DLR system of equations \eqref{eq:DLReq1}--\eqref{eq:DLReq3}. This subsection is dedicated to showing that this scheme can in fact be formulated as a projector-splitting scheme for the time discretization of the Dual DO approximation of \eqref{eq:pr1}. Afterwards, we will continue by showing its connection to the projector-splitting scheme of the first order proposed in \cite{Lubich14,Lubich15} and further analyzed in \cite{Kieri16}.
	
	In what follows, we will focus on the evolution of $u^{n,*}_{h,\hat{\rho}}$, i.e.\ the $0$-mean part of the discrete DLR solution $u^{n}_{h,\hat{\rho}}$.
	
	\begin{lemma}\label{lemma:projsplit}
		The discretized system of equations \eqref{eq:discDLReq2}--\eqref{eq:discDLReq3} can be equivalently reformulated as 
		\begin{align}
			\langle\tilde{u}_{h,\hat{\rho}},v_h \rangle_{H,\lhr} &= \langle u^{n,*}_{h,\hat{\rho}},v_h\rangle_{H,\lhr} \nonumber\\
			&\hspace{0.5cm}+ \triangle t\big( \Pro_{\hat{\rho},\mathcal{Y}^n}[f^{{n,n+1}^*} - \mathcal{L}^*(u^n_{h,\hat{\rho}}, u^{n+1}_{h,\hat{\rho}})],v_h\big)_{V'V,\lhr}\label{eq:projeq1},\\
			\langle u^{n+1,*}_{h,\hat{\rho}},v_h\rangle_{H,\lhr} &= \langle\tilde{u}_{h,\hat{\rho}},v_h\rangle_{H,\lhr} \nonumber\\
			&\hspace{0.5cm} +\triangle t\big( \Pro_{\hat{\rho},\mathcal{Y}^n}^\perp\big[\Pro_{\tilde{\mathcal{U}}^{n+1}}[f^{{n,n+1}^*} - \mathcal{L}^*(u^n_{h,\hat{\rho}}, u^{n+1}_{h,\hat{\rho}})]\big],v_h\big)_{V'V,\lhr},\label{eq:projeq2}\\
			&\hspace{7cm}\forall v_h\in V_h\otimes\lhr,
			\nonumber
		\end{align}
		where $\tilde{u}_{h,\hat{\rho}} = \tilde{U}^{n+1}Y^{n^\intercal}$.
	\end{lemma}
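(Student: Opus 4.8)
The plan is to observe that the implication \eqref{eq:discDLReq2}--\eqref{eq:discDLReq3} $\Rightarrow$ \eqref{eq:projeq1}--\eqref{eq:projeq2} is, up to the identification $\tilde{u}_{h,\hat{\rho}}=\tilde{U}^{n+1}Y^{n^\intercal}$, already contained in the proof of Lemma~\ref{lemma:DLRprojdisc}, and then to supply the converse, which follows by testing \eqref{eq:projeq1}--\eqref{eq:projeq2} against a convenient spanning family of $V_h\otimes\lhr$.

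For the forward direction I would recall the computation that produced \eqref{eq:tmpprojeq1} and \eqref{eq:tmpprojeq2}: multiplying \eqref{eq:discDLReq2} by $Y^n_j$, summing over $j$, using $\mathbb{E}\hr[(\mathcal{L}(u^n_{h,\hat{\rho}},u^{n+1}_{h,\hat{\rho}})-f^{n,n+1})Y^n]Y^{n^\intercal}=\Pro_{\hat{\rho},\mathcal{Y}^n}[\mathcal{L}^*(u^n_{h,\hat{\rho}},u^{n+1}_{h,\hat{\rho}})-f^{{n,n+1}^*}]$ together with $u^{n,*}_{h,\hat{\rho}}=U^nY^{n^\intercal}$, and taking the weak formulation in $\lhr$, gives \eqref{eq:projeq1}; similarly, multiplying \eqref{eq:discDLReq3} on the right by $(\tilde{M}^{n+1})^{-1}\tilde{U}^{{n+1}^\intercal}$ (legitimate in this subsection, where $\tilde{M}^{n+1}$ is full rank), using that $\Pro_{\hat{\rho},\mathcal{Y}^n}^{\perp}$ commutes with right multiplication by the deterministic factor $(\tilde{M}^{n+1})^{-1}\tilde{U}^{{n+1}^\intercal}$ so that $\Pro_{\hat{\rho},\mathcal{Y}^n}^{\perp}[(\cdot,\tilde{U}^{n+1})_{V'V}](\tilde{M}^{n+1})^{-1}\tilde{U}^{{n+1}^\intercal}=\Pro_{\hat{\rho},\mathcal{Y}^n}^{\perp}[\Pro_{\tilde{\mathcal{U}}^{n+1}}[\cdot]]$, and using $u^{n+1,*}_{h,\hat{\rho}}=\tilde{U}^{n+1}\tilde{Y}^{{n+1}^\intercal}$ (by the reorthonormalization \eqref{eq:discDLReq4} and the second property of Lemma~\ref{lemma:discprop}), gives \eqref{eq:projeq2}.

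For the converse I would test \eqref{eq:projeq1} against $v_h=v_h'\,Y^n_j$ with $v_h'\in V_h$ and $\eR{j}$: by $\langle Y^n_i,Y^n_j\rangle_{\lhr}=\delta_{ij}$ and $\mathbb{E}\hr[Y^n_j]=0$ the three terms reduce to $\langle\tilde{U}^{n+1}_j,v_h'\rangle_H$, $\langle U^n_j,v_h'\rangle_H$ and $\triangle t\,(\mathbb{E}\hr[(f^{n,n+1}-\mathcal{L}(u^n_{h,\hat{\rho}},u^{n+1}_{h,\hat{\rho}}))Y^n_j],v_h')_{V'V}$, which rearranges to \eqref{eq:discDLReq2}, whereas testing against $v_h'\,w$ with $w\in\lhr$ that is $\hat{\rho}$-orthogonal to $\mathcal{Y}^n$ only gives $0=0$; since these two families span $V_h\otimes\lhr$, \eqref{eq:projeq1} is equivalent to \eqref{eq:discDLReq2}. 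To recover \eqref{eq:discDLReq3} I would use that both sides of \eqref{eq:projeq2} lie in $V_h\otimes\lhr$, on which $\langle\cdot,\cdot\rangle_{H,\lhr}$ is an inner product, so that \eqref{eq:projeq2} is equivalent to the pointwise identity
\[
\frac{u^{n+1,*}_{h,\hat{\rho}}-\tilde{u}_{h,\hat{\rho}}}{\triangle t}=\Pro_{\hat{\rho},\mathcal{Y}^n}^{\perp}\big[\Pro_{\tilde{\mathcal{U}}^{n+1}}[f^{{n,n+1}^*}-\mathcal{L}^*(u^n_{h,\hat{\rho}},u^{n+1}_{h,\hat{\rho}})]\big];
\]
then, writing $u^{n+1,*}_{h,\hat{\rho}}-\tilde{u}_{h,\hat{\rho}}=\tilde{U}^{n+1}(\tilde{Y}^{n+1}-Y^n)^\intercal$, pairing both sides with $\tilde{U}^{n+1}_i$ in $H$ for each $i$, and using $\langle\tilde{U}^{{n+1}^\intercal},\tilde{U}^{n+1}\rangle_H=\tilde{M}^{n+1}$ and $(\tilde{M}^{n+1})^{-1}\tilde{M}^{n+1}=\mathrm{Id}$, one reads off exactly \eqref{eq:discDLReq3}.

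The only genuinely delicate point is the bookkeeping in the last step: one must keep track of the row/column (transpose) conventions for $\tilde{U}^{n+1},\tilde{Y}^{n+1},\tilde{M}^{n+1}$, check that $\Pro_{\hat{\rho},\mathcal{Y}^n}^{\perp}$ really does pass through the purely deterministic matrix products (it acts only on the stochastic variable), and invoke the standing full-rank hypothesis on $\tilde{M}^{n+1}$ to cancel $(\tilde{M}^{n+1})^{-1}\tilde{M}^{n+1}$. Everything else reuses verbatim the manipulations already carried out for Lemma~\ref{lemma:DLRprojdisc}.
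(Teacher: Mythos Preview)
Your forward direction is exactly the paper's proof: the paper simply states that \eqref{eq:projeq1}--\eqref{eq:projeq2} are the equations \eqref{eq:tmpprojeq1}--\eqref{eq:tmpprojeq2} already derived in the proof of Lemma~\ref{lemma:DLRprojdisc}, and you reproduce that derivation correctly.

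Where you go beyond the paper is in supplying the converse. The paper's one-line proof addresses only the implication \eqref{eq:discDLReq2}--\eqref{eq:discDLReq3} $\Rightarrow$ \eqref{eq:projeq1}--\eqref{eq:projeq2}, despite the word ``equivalently'' in the statement. Your converse argument is sound: testing \eqref{eq:projeq1} against $v_h'\,Y^n_j$ recovers \eqref{eq:discDLReq2} by orthonormality of $Y^n$; and for \eqref{eq:projeq2}, since $\Pro_{\tilde{\mathcal{U}}^{n+1}}$ lands in $\spn\{\tilde{U}^{n+1}\}\subset V_h$, the $V'V$-pairing reduces to the $H$-inner product, so the weak identity over all of $V_h\otimes\lhr$ yields the pointwise one, and pairing with $\tilde{U}^{n+1}_i$ (using the full-rank hypothesis for $(\tilde{M}^{n+1})^{-1}\tilde{M}^{n+1}=\mathrm{Id}$) gives back \eqref{eq:discDLReq3}. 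So your proof is correct and strictly more complete than the paper's, at the cost of some extra bookkeeping that the paper leaves implicit.
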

	\begin{proof}
		These equations are essentially equations \eqref{eq:tmpprojeq1} and \eqref{eq:tmpprojeq2}, which are shown to hold in the proof of Lemma \ref{lemma:DLRprojdisc}.
		\qed
	\end{proof}
	
	We recall that from Lemma~\ref{lemma:DLRproj}, the zero-mean part of the continuous DLR approximation $u^{*} = U Y^\intercal$ satisfies \begin{multline*}
		( \dot{u}^* + \Pi_{u^*}[\mathcal{L}^*(u) - f^*],\, v)_{V'V,\lr} \\ = ( \dot{u}^* + \Pro_{\mathcal{Y}}[\mathcal{L}^*(u) - f^*] + \Pro_{\mathcal{Y}}^{\perp}[\Pro_{\mathcal{U}}[\mathcal{L}^*(u) - f^*]],\, v)_{V'V,\lr} = 0,\\\forall v\in \lrV.
	\end{multline*}
	Lemma~\ref{lemma:projsplit} therefore shows that the time integration scheme corresponds to a projection splitting scheme in which first the projection $\Pro_{\mathcal{Y}}[\mathcal{L}^*(u) - f^*]$ and then the projection $\Pro_{\mathcal{Y}}^{\perp}[\Pro_{\mathcal{U}}[\mathcal{L}^*(u) - f^*]]$ are applied.

	\subsubsection{Comparison to the projection scheme in \cite{Lubich14}}
	There are several equivalent DLR formulations. The DO formulation, proposed and applied in \cite{Sapsis09,Sapsis12,Ueckermann13}, seeks for an approximation of the form $u_R = UY^\intercal$ with $\{U_j\}_{j=1}^R\subset V_h$ orthonormal in $\langle\cdot,\cdot\rangle_{H}$ and $\{Y_j\}_{j=1}^R\subset\lhr$ linearly independent. The dual DO formulation, on the contrary, keeps the stochastic basis $\{Y_j\}_{j=1}^R$ orthonormal in $\langle\cdot,\cdot\rangle_{\lhr}$ and $\{U_j\}_{j=1}^R$ linearly independent. The double dynamically orthogonal (DDO) or bi-orthogonal formulation  searches for an approximation in the form $u_R = USV^\intercal$ with both $\{U_j\}_{j=1}^R\subset V_h$ and $\{V_j\}_{j=1}^R\subset \lhr$ orthonormal in $\langle\cdot,\cdot\rangle_{H}$, $\langle\cdot,\cdot\rangle_{\lhr}$ , respectively, and $S\in\R^{R\times R}$ a full rank matrix (see e.g. \cite{Cheng13,Cheng13b,Koch07}). In \cite{Choi14,Musharbash15} it was shown that these formulations are equivalent. In our work we consider the dual DO formulation with an isolated mean so that the stochastic basis functions are centered.
	
	A first order projector-splitting scheme introduced in \cite{Lubich14,Lubich15} and further analyzed in \cite{Kieri16} is a time integration scheme successfully used for the integration of dynamical low rank approximation in the DDO formulation. This subsection provides a detailed look into the comparison of the Algorithm \ref{alg:ourscheme} and the discretization scheme from \cite{Lubich14,Lubich15}. We will see that, if the solution is full rank, these schemes are in fact equivalent.
	
	We will adapt the algorithm from \cite{Lubich14} to approximate the DLR solution in the DDO form with an isolated mean, i.e.
	\begin{equation}\label{eq:DDOform}
		u_R(t) = \bar{u}_R(t) + U(t)S(t)V(t)^\intercal \quad\in V_h\otimes\lhr.
	\end{equation}
	
	Having an $R$-rank solution $u^n_{h,\hat{\rho}}$, the basic first-order scheme from \cite{Lubich14} requires the knowledge of the solution $u^{n+1}_{h,\hat{\rho}}$, which is used in evaluating the term $\triangle A = u^{n+1}_{h,\hat{\rho}} - u^n_{h,\hat{\rho}}$. To deal with differential equations where $u^{n+1}_{h,\hat{\rho}}$ is a-priori unknown, we will consider a general scheme where \[\triangle A\approx \triangle t  \big(f^{n,n+1} - \mathcal{L}(u^n_{h,\hat{\rho}}, u^{n+1}_{h,\hat{\rho}})\big)\] where $f^{n,n+1}$ and $\mathcal{L}(u^n_{h,\hat{\rho}}, u^{n+1}_{h,\hat{\rho}})$ can be any of the explicit, implicit or semi-implicit discretizations detailed in Section \ref{sec:fullydiscpr}. Adopting the notation from \cite{Lubich14}, the splitting scheme from \cite{Lubich14,Lubich15} for a DDO approximation of \eqref{eq:pr1} results in the following 6-step algorithm.
	
	\begin{algorithm}\label{alg:projsplit}Let $u^{n}_{h,\hat{\rho}} = {\bar{u}}^n + U_0 S_0 V_0^\intercal$ of the form \eqref{eq:DDOform}.
		\begin{enumerate}
			\item Compute the mean value $\hat{\bar{u}}^{n+1}$ such that 
			\begin{multline*}
				\langle \hat{\bar{u}}^{n+1} ,v_h\rangle_{H} = \langle {\bar{u}}^{n} ,v_h\rangle_{H} + \triangle t \Big(\mathbb{E}\hr[f^{n,n+1} - \mathcal{L}(u^n_{h,\hat{\rho}}, u^{n+1}_{h,\hat{\rho}})], v_h \Big)_{V' V}\\ \forall v_h \in V_h.
			\end{multline*}
			\item Solve for $K_1$ such that 
			\begin{multline*}
				\langle K_1,v_h\rangle_H = \langle U_0 S_0,v_h\rangle_H + \\\triangle t \Big( \mathbb{E}_{\hat{\rho}}\big[\big(f^{{n,n+1}^*} - \mathcal{L}^*(u^n_{h,\hat{\rho}}, u^{n+1}_{h,\hat{\rho}})\big) V_0\big],\,v_h \Big)_{V'V} \quad\forall v_h \in V_h.
			\end{multline*}
			\item Compute $U_1\in V_h,\; \hat{S}_1\in\R^{R\times R}$ such that $$U_1 \hat{S}_1 = K_1\;\text{and}\; U_1 \text{ is orthonormal in }\langle \cdot,\cdot\rangle_{H}.$$
			\item Set $$\tilde{S}_0 = \hat{S}_1 - \triangle t \Big(U_1^\intercal,\, \mathbb{E}_{\hat{\rho}}\big[\big(f^{{n,n+1}^*} - \mathcal{L}^*(u^n_{h,\hat{\rho}}, u^{n+1}_{h,\hat{\rho}})\big)V_0\big] \Big)_{V'V}.$$
			\item Compute $L_1\in \lhr$ such that $$ L_1 =  V_0 \tilde{S}_0^\intercal + \triangle t \Big(f^{{n,n+1}^{*}} - \mathcal{L}^{*}(u^n_{h,\hat{\rho}}, u^{n+1}_{h,\hat{\rho}}),U_1 \Big)_{V'V}.$$
			\item Compute $V_1\in \lhrz,\; S_1\in\R^{R\times R}$ such that $$V_1 S_1^\intercal = L_1 \text{ in }\lhrz \text{ and } V_1 \text{ is orthonormal in }\langle \cdot,\cdot\rangle_{\lhr}.$$
		\end{enumerate}
		
	\end{algorithm}
	
	The new solution $\hat{u}^{n+1}_{h,\hat{\rho}}$ is then defined as 
	\begin{equation*}
		\hat{u}^{n+1,*}_{h,\hat{\rho}} = \hat{\bar{u}}^{n+1} + U_1 S_1 V_1^\intercal.
	\end{equation*}

	Now, let us compare the previous steps to Algorithm \ref{alg:ourscheme}. We can easily observe that $\hat{\bar{u}}^{n+1} = {\bar{u}}^{n+1}$. Since $Y^n = V_0$, we can see that equation \eqref{eq:discDLReq2} is equivalent to step~1 with $U^n = U_0 S_0$, i.e.\ $K_1 = \tilde{U}^{n+1}$. 
	Further, we have \[\tilde{M}^{n+1} = \langle \tilde{U}^{{n+1}^\intercal}, \tilde{U}^{n+1}\rangle_H = \hat{S}_1^\intercal \langle U_1^\intercal, U_1\rangle_H \hat{S}_1 = \hat{S}_1^\intercal \hat{S}_1.\] Equation~\eqref{eq:discDLReq3} can be reformulated as 
	\begin{multline*}
		\tilde{Y}^{{n+1}}\hat{S}_1^\intercal \hat{S}_1 =  Y^{n}\hat{S}_1^\intercal \hat{S}_1 + \triangle t \big( f^{{n,n+1}^{*}} - \mathcal{L}^{*}(u^n_{h,\hat{\rho}}, u^{n+1}_{h,\hat{\rho}}), U_1 \big)_{V'V}\hat{S}_1 \\
		- \triangle t Y^n \big(\mathbb{E}_{\hat{\rho}}\big[ Y^{n^\intercal} (f^{{n,n+1}^{*}} - \mathcal{L}^{*}(u^n_{h,\hat{\rho}}, u^{n+1}_{h,\hat{\rho}}))\big], U_1 \big)_{V'V}\hat{S}_1 
	\end{multline*}
	which, provided $\hat{S}_1$ is invertible, is equivalent to 
	\begin{multline*}
		\tilde{Y}^{{n+1}}\hat{S}_1^\intercal =  Y^{n}\Big(\hat{S}_1^\intercal - \triangle t \big(\mathbb{E}_{\hat{\rho}}\big[ Y^{n^\intercal} (f^{{n,n+1}^{*}} - \mathcal{L}^{*}(u^n_{h,\hat{\rho}}, u^{n+1}_{h,\hat{\rho}}))\big], U_1 \big)_{V'V}\Big) \\
		+ \triangle t \big( f^{{n,n+1}^{*}} - \mathcal{L}^{*}(u^n_{h,\hat{\rho}}, u^{n+1}_{h,\hat{\rho}}), U_1 \big)_{V'V}.
	\end{multline*}
	Note that the expression in brackets in the first term on the right hand side is exactly the transpose of $\tilde{S}_0$ from step~3: \[\hat{S}_1^\intercal - \triangle t \big(\mathbb{E}_{\hat{\rho}}\big[ Y^{n^\intercal} (f^{{n,n+1}^{*}} - \mathcal{L}^{*}(u^n_{h,\hat{\rho}}, u^{n+1}_{h,\hat{\rho}}))\big], U_1 \big)_{V'V} = \tilde{S}_0^\intercal,\]
	from which we deduce
	\[L_1 = \tilde{Y}^{{n+1}}\hat{S}_1^\intercal.\] 
	Finally, we have 
	\[\hat{u}_{h,\hat{\rho}}^{n+1,*} = U_1 S_1 V_1^\intercal = U_1 L_1^\intercal = U_1 \hat{S}_1\tilde{Y}^{{n+1}^\intercal} = \tilde{U}^{n+1}\tilde{Y}^{{n+1}^\intercal} = {u}_{h,\hat{\rho}}^{n+1,*}.\]
	
	We conclude that the scheme in Algorithm~\ref{alg:ourscheme} and the scheme in Algorithm~\ref{alg:projsplit} coincide in exact arithmetic,  provided the matrix $S_1$ is invertible. 
	However, the numerical behavior of the two schemes differs when $S_1$ is singular or close to singular. For $\tilde{M}^{n+1}$ close to singular, solving equation \eqref{eq:discDLReq3} might lead to numerical instabilities. This problem seems to be avoided in the projector-splitting scheme from \cite{Lubich14,Lubich15}, as no matrix inversion is involved. Such ill conditioning is however hidden in performing step~3 of Algorithm \ref{alg:projsplit}, since the QR or SVD decomposition can become unstable for ill-conditioned matrices (see \cite[chap.\ 5]{Golub96}). In the case of a rank deficient basis $\{\tilde{U}^{n+1}\}$, Algorithm \ref{alg:ourscheme} updates the stochastic basis by solving equation \eqref{eq:discDLReq3} in a least square sense while minimizing the norm $\|\tilde{Y}^{n+1} - Y^n\|_{\lhr}$. The previous subsection showed that such solution satisfies the discrete variational formulation which plays a crucial role in stability estimation (see Section \ref{sec:stabestdiscDLR}).  On the other hand, Algorithm \ref{alg:projsplit} relies on the somehow arbitrary completion of the basis $\{U_1\}$ in the step~3. In presence of rank deficiency, the two algorithms can deliver different solutions (see Section~\ref{sec:rankdef} for a numerical comparison).
	
	\begin{remark}
		Note that the ordering of the equations in Algorithm \ref{alg:ourscheme} is crucial. When dealing with the DO formulation, i.e.\ orthonormal deterministic basis and linearly independent stochastic basis, we shall first update the stochastic basis and then evolve the deterministic basis. For a reversed ordering the Theorem \ref{th:discvarform} would not hold.
	\end{remark}
	
	\section{Stability estimates}\label{sec:stabest}
	The stability of the solution of problems similar to \eqref{eq:pr1} are well analyzed (see e.g.\ \cite{Ern04/6}). A natural question is to what extent constraining the dynamics to the low rank manifold influences the stability properties. In Section \ref{sec:stabesttrue}, we will first recall some stability properties of the true solution $u_{\tr}$ of problem~\eqref{eq:pr1}. Then, in Section~\ref{sec:stabestDLR} we will see that these properties hold for the continuous DLR solution as well. It turns out that our discretization schemes satisfy analogous stability properties, as we will see in Section~\ref{sec:stabestdiscDLR}. In particular, we will show that the implicit and semi-implicit version are unconditionally stable under some mild conditions on the size of the randomness in the operator. We will state two types of estimates: the first one holds for an operator $\mathcal{L}$ as described in Section \ref{sec:prstatement} and a second one additionally assuming the operator $\mathcal{L}$ to be symmetric. Note that in the second case the bilinear coercive form $\langle\cdot,\cdot\rangle_{\mathcal{L},\rho}$ is a scalar product on $\lrV$. In the rest of this section we will assume that a solution of problem \eqref{eq:pr1}, a continuous DLR solution and a discrete DLR solution exist.
	\subsection{Stability of the continuous problem}\label{sec:stabesttrue}
	We state here some standard stability estimates concerning the solution $u_{\mathrm{true}}$ of problem~\eqref{eq:pr1}.
	\begin{proposition}\label{prop:stab_true}
		Let $u_{\mathrm{true}}\in L^2(0,T;\lrV)$ be the solution of problem \eqref{eq:pr1}. Then, the following estimates hold:
		\begin{enumerate}
			\item \begin{multline}\label{eq:stabtrue1}
				\|u_{\mathrm{true}}(T)\|_{H,\lr}^2 + \Ccoerc\int_{0}^{T}\|u_{\mathrm{true}}(t)\|_{V,\lr}^2 \,\mathrm{d}t\\
				\leq \|u_{\mathrm{true}}(0)\|_{H,\lr}^2 + \frac{\Cemb^2}{\Ccoerc}\big\|f\big\|_{L^2(0,T;\lrH)}^2;
			\end{multline}
			\item if, in addition, $\mathcal{L}$ is symmetric and $\dot{u}_{\mathrm{true}}\in L^2(0,T;\lrH)$, we have
			\begin{multline}\label{eq:stabtrue2}
				\|u_{\mathrm{true}}(T)\|_{\mathcal{L},\rho}^2 + \int_{0}^{T}\|\dot{u}_{\mathrm{true}}(t)\|_{H,\lr}^2 \,\mathrm{d}t\\
				\leq \|u_{\mathrm{true}}(0)\|_{\mathcal{L},\rho}^2 + \big\|f\big\|_{L^2(0,T;\lrH)}^2,
			\end{multline}
		\end{enumerate}
		where $\Ccoerc>0$ is the coercivity constant defined in \eqref{eq:coerc_const} and $\Cemb$ is the continuous embedding constant defined in \eqref{eq:embed_const}.
		
		For $f = 0$ and $t_1,t_2\in [0,T],\;t_1\leq t_2$, we have:
		\begin{enumerate}
			\item[3.] \begin{equation}\label{eq:stabtrue3}
				\|u_{\mathrm{true}}(t_2)\|_{H,\lr}\leq \|u_{\mathrm{true}}(t_1)\|_{H,\lr},
			\end{equation}
			\item[4.] moreover, if $\mathcal{L}$ is symmetric and $\dot{u}_{\mathrm{true}}\in L^2(0,T;\lrH)$, we have
			\begin{equation}\label{eq:stabtrue4}
				\|u_{\mathrm{true}}(t_2)\|_{\mathcal{L},\rho}\leq \|u_{\mathrm{true}}(t_1)\|_{\mathcal{L},\rho}.
			\end{equation}
		\end{enumerate}
	\end{proposition}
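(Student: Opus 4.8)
The plan is to prove all four estimates by the classical parabolic energy method, testing the weak formulation \eqref{eq:pr1} with suitable test functions and exploiting the Gelfand-triple structure of $(\lrV,\lrH,\lrVdual)$ recalled in Section~\ref{sec:prstatement}. The single analytic tool I would invoke is the integration-by-parts formula for this triple: for $u\in L^2(0,T;\lrV)$ with $\dot u\in L^2(0,T;\lrVdual)$ the map $t\mapsto\|u(t)\|_{H,\lr}^2$ is absolutely continuous and
\[ \frac{\dx}{\dx t}\|u(t)\|_{H,\lr}^2 = 2\,(\dot u(t),u(t))_{V'V,\lr}\qquad\text{for a.e. }t\in(0,T) \]
(see e.g.\ \cite{Wloka87}); the solution $u\tr$ of \eqref{eq:pr1} has exactly this regularity.

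For \eqref{eq:stabtrue1} I would take $v=u\tr(t)$ in \eqref{eq:pr1}. The time-derivative term becomes $\tfrac12\frac{\dx}{\dx t}\|u\tr(t)\|_{H,\lr}^2$ by the formula above; the operator term is bounded below by coercivity, $(\mathcal{L}(u\tr),u\tr)_{V'V,\lr}=\langle u\tr,u\tr\rangle_{\mathcal{L},\rho}\ge\Ccoerc\|u\tr\|_{V,\lr}^2$; and the forcing term is estimated via Cauchy--Schwarz, the embedding \eqref{eq:embed_const}, and Young's inequality as $\langle f,u\tr\rangle_{H,\lr}\le\Cemb\|f\|_{H,\lr}\|u\tr\|_{V,\lr}\le\tfrac{\Cemb^2}{2\Ccoerc}\|f\|_{H,\lr}^2+\tfrac{\Ccoerc}{2}\|u\tr\|_{V,\lr}^2$. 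Absorbing the half coercive term into the left-hand side and integrating over $(0,T)$ gives \eqref{eq:stabtrue1}.

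For \eqref{eq:stabtrue2} I would instead test with $v=\dot u\tr(t)$, which is meaningful thanks to the extra regularity $\dot u\tr\in L^2(0,T;\lrH)$ — though to be fully rigorous one should first perform the computation on a Galerkin approximation of \eqref{eq:pr1} and pass to the limit, since $\dot u\tr$ need not lie in $\lrV$. Then the derivative term is $\|\dot u\tr(t)\|_{H,\lr}^2$; using symmetry of $\mathcal{L}$, so that $\langle\cdot,\cdot\rangle_{\mathcal{L},\rho}$ is a scalar product on $\lrV$, the operator term equals $\langle u\tr,\dot u\tr\rangle_{\mathcal{L},\rho}=\tfrac12\frac{\dx}{\dx t}\|u\tr(t)\|_{\mathcal{L},\rho}^2$; and the forcing term is bounded by $\tfrac12\|f(t)\|_{H,\lr}^2+\tfrac12\|\dot u\tr(t)\|_{H,\lr}^2$. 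Absorbing and integrating yields \eqref{eq:stabtrue2}. Finally, \eqref{eq:stabtrue3} and \eqref{eq:stabtrue4} follow by running the same two computations with $f=0$ on an arbitrary subinterval $(t_1,t_2)$: the right-hand sides vanish, the leftover non-negative integrals ($\Ccoerc\int_{t_1}^{t_2}\|u\tr\|_{V,\lr}^2$, resp.\ $\int_{t_1}^{t_2}\|\dot u\tr\|_{H,\lr}^2$) are discarded, and taking square roots gives the claimed monotonicity of $t\mapsto\|u\tr(t)\|_{H,\lr}$, resp.\ $t\mapsto\|u\tr(t)\|_{\mathcal{L},\rho}$.

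I do not expect any genuine obstacle: the whole argument is the standard energy estimate. The only delicate point is the legitimacy of using $\dot u\tr$ as a test function in \eqref{eq:stabtrue2}, since only $\dot u\tr\in L^2(0,T;\lrH)$ (and not $L^2(0,T;\lrV)$) is assumed; I would settle this via the Galerkin/density argument mentioned above, after which everything else reduces to Cauchy--Schwarz, Young's inequality, and the Gelfand-triple integration-by-parts formula.
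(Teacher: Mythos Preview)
Your proposal is correct and matches the paper's proof essentially step for step: the paper also tests \eqref{eq:pr1} with $u\tr$ for part~1 (invoking \cite[Prop.~23.23]{Zeidler90} for the identity $\frac{\dx}{\dx t}\|u\tr\|_{H,\lr}^2 = 2(\dot u\tr,u\tr)_{V'V,\lr}$) and with $\dot u\tr$ for part~2, then obtains parts~3 and~4 by integrating over $[t_1,t_2]$ with $f=0$. The paper does not explicitly address the regularity issue you raise about testing with $\dot u\tr$, so your remark on the Galerkin/density justification is, if anything, a bit more careful than the original.
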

	
	\begin{proof}
		As for part $1$, choose $u_{\mathrm{true}}$ as a test function in the variational formulation~\eqref{eq:pr1}. Using \cite[Prop. 23.23]{Zeidler90} results in 
		\begin{multline*}
			\frac{1}{2}\frac{\dx }{\dx t}\|u_{\mathrm{true}}\|_{H,\lr}^2 + \langle u_{\mathrm{true}}, u\tr\rangle_{\mathcal{L},\rho} = \langle f,u_{\mathrm{true}}\rangle_{H,\lr}\leq \Cemb\|f\|_{H,\lr}\|u_{\mathrm{true}}\|_{V,\lr}\\
			\leq \frac{\Cemb^2}{2\Ccoerc} \|f\|_{H,\lr}^2 + \frac{\Ccoerc}{2}\|u_{\mathrm{true}}\|_{V,\lr}^2\quad\text{for a.e.\ }t\in(0,T].
		\end{multline*}
		Multiplying by $2$ and integrating over $[0,T]$ gives the sought estimate. Part~2 is proved in a similar way by considering $\dot{u}_{\mathrm{true}}$ as a test function. We can derive
		\begin{multline*}
			\|\dot{u}_{\mathrm{true}}\|_{H,\lr}^2 + \frac{1}{2}\frac{\dx }{\dx t}\|u_{\mathrm{true}}\|_{\mathcal{L},\rho}^2= \langle f,\dot{u}_{\mathrm{true}}\rangle_{H,\lr}\leq \|f\|_{H,\lr}\|\dot{u}_{\mathrm{true}}\|_{H,\lr}\\
			\leq \frac{\|f\|_{H,\lr}^2}{2} + \frac{\|\dot{u}_{\mathrm{true}}\|_{H,\lr}^2}{2}
		\end{multline*}
		and obtain the result by multiplying by 2 and integrating over $[0,T]$.
		
		Part~3 and part~4 are consequences of part~1 and 2, where the final integration is realized over $[t_1,t_2]$ instead of $[0,T]$.
		\qed
	\end{proof}
	
	\subsection{Stability of the continuous DLR solution}\label{sec:stabestDLR}
	
	Constraining the dynamics to the $R$-rank manifold does not destroy the stability properties from Proposition~\ref{prop:stab_true}.
	
	\begin{theorem}
		Let $u\in L^2(0,T;\lrV)$ with $\dot{u}\in L^2(0,T;\lrV)$ be the continuous DLR solution defined in Definition~\ref{def:contDLRsol}. Then $u$ satisfies the same inequalities \eqref{eq:stabtrue1}, \eqref{eq:stabtrue2}, \eqref{eq:stabtrue3}, \eqref{eq:stabtrue4} as the true solution $u\tr$.

	\end{theorem}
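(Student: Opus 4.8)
The plan is to lean entirely on the variational formulation of the continuous DLR solution established in Theorem~\ref{th:varform} and to reproduce, essentially verbatim, the energy argument used in the proof of Proposition~\ref{prop:stab_true}. The subtle point is that the variational identity \eqref{eq:DLRvar} holds only for test functions of the restricted form $v = \bar{v} + v^*$ with $\bar{v}\in V$ and $v^*\in\mathcal{T}_{u^*}\mathcal{M}_R$, whereas the proof of Proposition~\ref{prop:stab_true} used $u\tr$ itself and $\dot{u}\tr$ as test functions. So the first—and essentially only—thing to check is that both $u(t)$ and $\dot{u}(t)$ are admissible test functions in \eqref{eq:DLRvar} for a.e.\ $t$.

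For $u = \bar{u} + u^*$ with $u^* = UY^\intercal$, the mean part $\bar{u}(t)$ lies in $V$, and choosing $\delta U_i = U_i$, $\delta Y_i = 0$ in the characterization \eqref{eq:DLRtang} of $\mathcal{T}_{u^*}\mathcal{M}_R$ shows $u^*(t) = \sum_{i=1}^R U_i Y_i\in\mathcal{T}_{u^*(t)}\mathcal{M}_R$ (the constraint $\langle\delta Y_i,Y_j\rangle_{\lr}=0$ being trivially satisfied). For $\dot{u} = \dot{\bar{u}} + \dot{u}^*$ with $\dot{u}^* = \tfrac{\mathrm d}{\mathrm d t}(UY^\intercal) = \sum_{i=1}^R(\dot{U}_i Y_i + U_i\dot{Y}_i)$, the mean part $\dot{\bar{u}}(t)$ again lies in $V$, while choosing $\delta U_i = \dot{U}_i\in V$, $\delta Y_i = \dot{Y}_i\in\lrz$ in \eqref{eq:DLRtang} the constraint $\langle\dot{Y}_i,Y_j\rangle_{\lr}=0$ is precisely the gauge (DO) condition \eqref{eq:DOcond}; hence $\dot{u}^*(t)\in\mathcal{T}_{u^*(t)}\mathcal{M}_R$ as well. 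Thus $u(t),\dot{u}(t)$ are both admissible in \eqref{eq:DLRvar}.

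With this observation, the four estimates are transcriptions of the corresponding arguments for $u\tr$. For \eqref{eq:stabtrue1} I take $v = u(t)$ in \eqref{eq:DLRvar}, use the integration-by-parts identity $\tfrac12\tfrac{\mathrm d}{\mathrm d t}\|u\|_{H,\lr}^2 = (\dot{u},u)_{V'V,\lr}$ (valid in the Gelfand triple $(\lrV,\lrH,\lrVdual)$ by \cite[Prop.~23.23]{Zeidler90}), the coercivity $\langle u,u\rangle_{\mathcal{L},\rho}\ge\Ccoerc\|u\|_{V,\lr}^2$, and the embedding \eqref{eq:embed_const} with Young's inequality on the right-hand side, exactly as in Proposition~\ref{prop:stab_true}; integrating over $[0,T]$ gives \eqref{eq:stabtrue1} with identical constants. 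For \eqref{eq:stabtrue2}, in the symmetric case I take $v = \dot{u}(t)$ and use $\tfrac12\tfrac{\mathrm d}{\mathrm d t}\|u\|_{\mathcal{L},\rho}^2 = \langle u,\dot{u}\rangle_{\mathcal{L},\rho}$ (legitimate since $\langle\cdot,\cdot\rangle_{\mathcal{L},\rho}$ is then an inner product and $\dot{u}\in L^2(0,T;\lrV)\subset L^2(0,T;\lrH)$), bounding $\langle f,\dot{u}\rangle_{H,\lr}$ by Young's inequality. Finally \eqref{eq:stabtrue3} and \eqref{eq:stabtrue4} follow by setting $f=0$ and integrating over an arbitrary subinterval $[t_1,t_2]$ rather than $[0,T]$. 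The crux is genuinely the admissibility check above: once it is in place, nothing in the argument sees the rank $R$ or the singular values of $u$, so the same constants $\Ccoerc,\Cemb$ (and $\Cbound$) carry over unchanged.
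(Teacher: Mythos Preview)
Your proposal is correct and follows essentially the same approach as the paper: the key step is precisely the admissibility check that $u(t)$ and $\dot u(t)$ lie in $V\oplus\mathcal{T}_{u^*(t)}\mathcal{M}_R$ (using $\delta Y_i=0$ for $u^*$ and the DO condition \eqref{eq:DOcond} for $\dot u^*$), after which the proof of Proposition~\ref{prop:stab_true} carries over verbatim. The paper's own proof is exactly this argument.
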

	\begin{proof}
		Part 1: Let $u = \bar{u} + U Y^\intercal$ with $U Y^\intercal\in\mathcal{M}_R$. Then, we have $u^* = u - \bar{u}\;\in \mathcal{T}_{u^*}\mathcal{M}_R$. Indeed, since 
		$$u^* = \sum_{i=1}^R U_i 0 + U_i Y_i\quad\in \lrVz$$ with $\langle 0, Y_i\rangle_{\lr} = 0$, we can take $u$ as a test function in the variational formulation \eqref{eq:DLRvar}. The rest of the proof follows the same steps as in the proof of Proposition~\ref{prop:stab_true}.
		
		Part 2: we express $$\dot{u}^* = \sum_{j=1}^R \dot{U}_j Y_j + U_j \dot{Y}_j\quad\in \mathcal{T}_{u^*}\mathcal{M}_R$$ since $\langle Y_i, \dot{Y}_j\rangle_{\lr} = \delta_{ij}$ and $\dot{{u}}^*\in \lrV$. As $\dot{\bar{u}}\in V$ we can consider $\dot{u}$ as a test function in the variational formulation \eqref{eq:DLRvar} and arrive at the sought result.
		
		Part 3 and 4 \corr{are} obtained analogously.
		\qed
	\end{proof}

	\subsection{Stabilty of the discrete DLR solution}\label{sec:stabestdiscDLR}
	
	Now we proceed with showing stability properties of the fully discretized DLR system from Algorithm \ref{alg:ourscheme} for the three different operator evaluation terms corresponding to implicit Euler, explicit Euler and semi-implicit scheme. For each of them we will establish boundedness of norms and a decrease of norms for the case of zero forcing term $f$.
	
	The following simple lemma will be repeatedly used throughout.
	\begin{lemma}\label{lemma:scprod}
		Let $\langle \cdot, \cdot\rangle : (V_h\otimes \lhr) \times (V_h\otimes \lhr)\to\mathbb{R}$ be a symmetric bilinear form. Then it holds
		\begin{align*}
			\langle v,w - v\rangle &= \frac{1}{2}\Big( \langle w, w\rangle - \langle v,v\rangle - \langle w-v,w-v\rangle\Big)\\
			\langle w,w - v\rangle &= \frac{1}{2}\Big( \langle w, w\rangle - \langle v,v\rangle + \langle w-v,w-v\rangle\Big)\\
			\langle v,w + v\rangle &= \frac{1}{2}\Big( \langle v, v\rangle - \langle w,w\rangle + \langle w+v,w+v\rangle\Big)
		\end{align*}
		for any $v,w\in V_h\otimes\lhr$.
	\end{lemma}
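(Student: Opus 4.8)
The three identities are purely algebraic consequences of bilinearity and symmetry of $\langle\cdot,\cdot\rangle$, so the plan is to expand everything and match terms. The one computational fact I would record first is the expansion of the ``square'' of a sum or difference: for any $v,w\in V_h\otimes\lhr$,
\begin{equation*}
	\langle w-v,\,w-v\rangle = \langle w,w\rangle - 2\langle v,w\rangle + \langle v,v\rangle,\qquad
	\langle w+v,\,w+v\rangle = \langle w,w\rangle + 2\langle v,w\rangle + \langle v,v\rangle,
\end{equation*}
where I have used bilinearity to split the arguments and symmetry to combine the two mixed terms $\langle v,w\rangle$ and $\langle w,v\rangle$.

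With these at hand, each identity follows by substituting into the right-hand side and simplifying. For the first one, the right-hand side becomes $\tfrac12\big(\langle w,w\rangle - \langle v,v\rangle - \langle w,w\rangle + 2\langle v,w\rangle - \langle v,v\rangle\big) = \langle v,w\rangle - \langle v,v\rangle$, which equals $\langle v,w-v\rangle$ by bilinearity. For the second, the extra sign on the $\langle w-v,w-v\rangle$ term makes the right-hand side collapse to $\langle w,w\rangle - \langle v,w\rangle = \langle w,w-v\rangle$. For the third, substituting the expansion of $\langle w+v,w+v\rangle$ gives $\tfrac12\big(\langle v,v\rangle - \langle w,w\rangle + \langle w,w\rangle + 2\langle v,w\rangle + \langle v,v\rangle\big) = \langle v,v\rangle + \langle v,w\rangle = \langle v,w+v\rangle$.

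There is no genuine obstacle here: the only points to be careful about are invoking symmetry precisely where the two cross terms are merged, and not assuming positivity or definiteness of $\langle\cdot,\cdot\rangle$ anywhere, since the statement only requires the form to be symmetric and bilinear (this matters because in the applications $\langle\cdot,\cdot\rangle$ will sometimes be $\langle\cdot,\cdot\rangle_{H,\lhr}$ and sometimes $\langle\cdot,\cdot\rangle_{\mathcal{L},\hat\rho}$, and occasionally an indefinite combination).
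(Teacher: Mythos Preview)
Your proof is correct. The paper actually omits the proof of this lemma entirely, treating the identities as elementary consequences of bilinearity and symmetry; your expansion-and-match argument is exactly the standard verification and requires no further justification.
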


	\subsubsection{Implicit Euler scheme}
	Applying an implicit operator evaluation, i.e.\ $\mathcal{L}(u^n_{h,\hat{\rho}}, u^{n+1}_{h,\hat{\rho}}) = \mathcal{L}(u^{n+1}_{h,\hat{\rho}})$ results in a discretization scheme with the following stability properties.
	
	\begin{theorem}
		Let $\{u_{h,\hat{\rho}}^n\}_{n=0}^{N}$ be the discrete DLR solution as defined in Algorithm \ref{alg:ourscheme} with $\mathcal{L}(u^n_{h,\hat{\rho}}, u^{n+1}_{h,\hat{\rho}}) = \mathcal{L}(u^{n+1}_{h,\hat{\rho}})$. Then the following estimates hold:
		\begin{enumerate}
			\item
			\begin{multline*}
				\|u^{N}_{h,\hat{\rho}}\|_{H,\lhr}^2  + \triangle t \Ccoerc \sum_{n=0}^{N-1}\|u^{n+1}_{h,\hat{\rho}}\|_{V,\lhr}^2 \\
				\leq  \|u^0_{h,\hat{\rho}}\|_{H,\lhr}^2 + \triangle t\frac{\Cemb^2}{\Ccoerc}\sum_{n=0}^{N-1}\|f(t_{n+1})\|_{H,\lhr}^2,
			\end{multline*}
			\item if $\mathcal{L}$ is a symmetric operator we have 
			\begin{multline*}
				\|u^{N}_{h,\hat{\rho}}\|_{\mathcal{L},\hat{\rho}}^2 + \triangle t\sum_{n=0}^{N-1}\Big\| \frac{u^{n+1}_{h,\hat{\rho}} - u^{n}_{h,\hat{\rho}}}{\triangle t}\Big\|_{H,\lhr}^2\\
				\leq \|u^{0}_{h,\hat{\rho}}\|_{\mathcal{L},\hat{\rho}}^2 + \triangle t \sum_{n=0}^{N-1}\|f(t_{n+1})\|_{H,\lhr}^2,
			\end{multline*}
		\end{enumerate}
		for any time and space discretization parameters $\triangle t,\,h>0$ with $\Ccoerc, \Cemb>0$ the coercivity and continuous embedding constant defined in \eqref{eq:coerc_const}, \eqref{eq:embed_const}, respectively.
		
		In particular, for $f = 0$ and  $n=0,\dots,N-1$ it holds:
		\begin{enumerate}
			\item[3.] $\|u^{n+1}_{h,\hat{\rho}}\|_{H,\lhr}\leq \|u^{n}_{h,\hat{\rho}}\|_{H,\lhr},$
			\item[4.] if $\mathcal{L}$ is a symmetric operator we have $\|u^{n+1}_{h,\hat{\rho}}\|_{\mathcal{L},\hat{\rho}}\leq \|u^{n}_{h,\hat{\rho}}\|_{\mathcal{L},\hat{\rho}}$.
		\end{enumerate}
	\end{theorem}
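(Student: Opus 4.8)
The plan is to carry over, essentially verbatim, the stability argument used for the continuous DLR solution, now based on the discrete variational formulation of Theorem~\ref{th:discvarform} (or Theorem~\ref{th:discvarformrankdef} in the rank-deficient case). The key enabling fact is Lemma~\ref{lemma:unun+1}: the zero-mean parts of $u^n_{h,\hat{\rho}}$ and $u^{n+1}_{h,\hat{\rho}}$ both lie in $\mathcal{T}_{\tilde{U}^{n+1}Y^{n^\intercal}} \mathcal{M}_R^{h,\hat{\rho}}$, so that $u^{n+1}_{h,\hat{\rho}}$ and, since the test space is a linear space, the increment $u^{n+1}_{h,\hat{\rho}}-u^n_{h,\hat{\rho}}$ are both admissible test functions. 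With the implicit choice $\mathcal{L}(u^n_{h,\hat{\rho}},u^{n+1}_{h,\hat{\rho}})=\mathcal{L}(u^{n+1}_{h,\hat{\rho}})$ and $f^{n,n+1}=f(t_{n+1})$, the formulation reads $\langle (u^{n+1}_{h,\hat{\rho}}-u^n_{h,\hat{\rho}})/\triangle t, v_h\rangle_{H,\lhr} + \langle u^{n+1}_{h,\hat{\rho}}, v_h\rangle_{\mathcal{L},\hat{\rho}} = \langle f(t_{n+1}), v_h\rangle_{H,\lhr}$ for all admissible $v_h$.

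For item~1 I would test with $v_h = u^{n+1}_{h,\hat{\rho}}$. The second identity of Lemma~\ref{lemma:scprod} applied to $\langle\cdot,\cdot\rangle_{H,\lhr}$ rewrites the first term as $\tfrac{1}{2\triangle t}\bigl(\|u^{n+1}_{h,\hat{\rho}}\|_{H,\lhr}^2 - \|u^{n}_{h,\hat{\rho}}\|_{H,\lhr}^2 + \|u^{n+1}_{h,\hat{\rho}}-u^n_{h,\hat{\rho}}\|_{H,\lhr}^2\bigr)$; coercivity bounds $\langle u^{n+1}_{h,\hat{\rho}}, u^{n+1}_{h,\hat{\rho}}\rangle_{\mathcal{L},\hat{\rho}}$ below by $\Ccoerc\|u^{n+1}_{h,\hat{\rho}}\|_{V,\lhr}^2$; and Cauchy--Schwarz together with the embedding \eqref{eq:embed_const} and Young's inequality bound the right-hand side by $\tfrac{\Cemb^2}{2\Ccoerc}\|f(t_{n+1})\|_{H,\lhr}^2 + \tfrac{\Ccoerc}{2}\|u^{n+1}_{h,\hat{\rho}}\|_{V,\lhr}^2$, the last term being absorbed on the left. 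Dropping the nonnegative term $\|u^{n+1}_{h,\hat{\rho}}-u^n_{h,\hat{\rho}}\|_{H,\lhr}^2$, multiplying by $2\triangle t$, and summing the telescoping inequality over $n=0,\dots,N-1$ gives item~1. Item~3 is the case $f=0$: the per-step inequality then immediately gives $\|u^{n+1}_{h,\hat{\rho}}\|_{H,\lhr}\le\|u^n_{h,\hat{\rho}}\|_{H,\lhr}$, since all remaining terms on the left are nonnegative.

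For the symmetric case I would instead test with $v_h = u^{n+1}_{h,\hat{\rho}}-u^n_{h,\hat{\rho}}$. The first term is then exactly $\triangle t\,\|(u^{n+1}_{h,\hat{\rho}}-u^n_{h,\hat{\rho}})/\triangle t\|_{H,\lhr}^2$; since $\mathcal{L}$ is symmetric, $\langle\cdot,\cdot\rangle_{\mathcal{L},\hat{\rho}}$ is a symmetric bilinear form and Lemma~\ref{lemma:scprod} turns the operator term into $\tfrac12\bigl(\|u^{n+1}_{h,\hat{\rho}}\|_{\mathcal{L},\hat{\rho}}^2 - \|u^n_{h,\hat{\rho}}\|_{\mathcal{L},\hat{\rho}}^2 + \|u^{n+1}_{h,\hat{\rho}}-u^n_{h,\hat{\rho}}\|_{\mathcal{L},\hat{\rho}}^2\bigr)$; and Cauchy--Schwarz plus Young bound the forcing term by $\tfrac{\triangle t}{2}\|f(t_{n+1})\|_{H,\lhr}^2 + \tfrac{1}{2\triangle t}\|u^{n+1}_{h,\hat{\rho}}-u^n_{h,\hat{\rho}}\|_{H,\lhr}^2$, the last term being absorbed against half of the first term. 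Discarding the nonnegative $\|u^{n+1}_{h,\hat{\rho}}-u^n_{h,\hat{\rho}}\|_{\mathcal{L},\hat{\rho}}^2$, multiplying by $2$, and summing over $n$ yields item~2; item~4 is again the case $f=0$.

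I do not expect a genuine obstacle here: because the implicit scheme evaluates both the operator and the forcing at $t_{n+1}$, every quantity lands naturally in the test space and the coercive (resp. symmetric) form appears with the favourable sign, so no CFL-type condition and in particular no use of the inverse inequality \eqref{eq:discDLRinv} is needed. The only point deserving care is the admissibility of the chosen test functions, which is already provided by Lemma~\ref{lemma:unun+1}; this is exactly what makes unconditional stability possible and constitutes the qualitative difference from the explicit and semi-implicit schemes analysed afterwards.
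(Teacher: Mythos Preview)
Your proposal is correct and follows essentially the same approach as the paper: testing the discrete variational formulation with $v_h=u^{n+1}_{h,\hat{\rho}}$ for item~1 and with the increment for item~2, invoking Lemma~\ref{lemma:unun+1} for admissibility, Lemma~\ref{lemma:scprod} for the algebraic identities, and the same Young/embedding/coercivity bounds, then telescoping. The only cosmetic difference is that the paper tests item~2 with $(u^{n+1}_{h,\hat{\rho}}-u^n_{h,\hat{\rho}})/\triangle t$ rather than with the unnormalised increment, which is immaterial.
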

	\begin{proof}\mbox{\\}
		Thanks to Theorem \ref{th:discvarform}, we know that the discretized DLR system of equations with implicit operator evaluation can be written in a variational formulation as
		\begin{align}\label{eq:implvarform}
			\begin{split}
				&\hspace{0cm}\big\langle \frac{u^{n+1}_{h,\hat{\rho}} - u^n_{h,\hat{\rho}}}{\triangle t},\, v_h\big\rangle_{H,\lhr} + \big\langle u^{n+1}_{h,\hat{\rho}},\, v_h  \big\rangle_{\mathcal{L},\hat{\rho}} = \big\langle f(t_{n+1}),\,v_h\big\rangle_{H,\lhr},\\
				&\hspace{3.1cm} \forall v_h = \bar{v}_h + v_h^* \text{ with } \bar{v}_h\in V_h\text{ and }v_h^*\in \mathcal{T}_{\tilde{U}^{n+1}Y^n} \mathcal{M}_R^{h,\hat{\rho}},
			\end{split}
		\end{align}
		$n = 0,\dots,N-1$.
		\begin{enumerate}
			\item Based on Lemma \ref{lemma:unun+1} we take $v_h = u^{n+1}_{h,\hat{\rho}}$ as a test function in the variational formulation \eqref{eq:implvarform}. Using Lemma \ref{lemma:scprod}  results in 
			\begin{multline*}\|u^{n+1}_{h,\hat{\rho}}\|_{H,\lhr}^2 - \|u^n_{h,\hat{\rho}}\|_{H,\lhr}^2 + \|u^{n+1}_{h,\hat{\rho}} - u^n_{h,\hat{\rho}}\|_{H,\lhr}^2 + 2\triangle t \langle u^{n+1}_{h,\hat{\rho}},u^{n+1}_{h,\hat{\rho}}\rangle_{\mathcal{L},\hat{\rho}} \\
				= 2\triangle t(f(t_{n+1}), u^{n+1}_{h,\hat{\rho}})_{V'V,\lhr}\leq \triangle t\frac{\Cemb^2}{\Ccoerc}\|f(t_{n+1})\|_{H,\lhr}^2 +\triangle t\Ccoerc \| u^{n+1}_{h,\hat{\rho}}\|^2_{V,\lhr}.
			\end{multline*}
			Using the coercivity condition \eqref{eq:coerc_const} and summing over $n=0,\dots,N-1$ gives us the sought result.
			\item Now, consider $v_h = (u^{n+1}_{h,\hat{\rho}} - u^{n+1}_{h,\hat{\rho}})/\triangle t$. Using Lemma \ref{lemma:scprod}, the variational formulation results in
			\begin{multline*}
				\Big\|\frac{u^{n+1}_{h,\hat{\rho}} - u^n_{h,\hat{\rho}}}{\triangle t}\Big\|_{H,\lhr}^2 + \frac{1}{2\triangle t}\big(\|u^{n+1}_{h,\hat{\rho}}\|_{\mathcal{L},\hat{\rho}}^2 - \|u^{n}_{h,\hat{\rho}}\|_{\mathcal{L},\hat{\rho}}^2 + \|u^{n+1}_{h,\hat{\rho}} - u^{n}_{h,\hat{\rho}}\|_{\mathcal{L},\hat{\rho}}^2\big) \\
				= \Big\langle f(t_{n+1}), \frac{u^{n+1}_{h,\hat{\rho}} - u^{n}_{h,\hat{\rho}}}{\triangle t}\Big\rangle_{H,\lhr}\leq \frac{\|f(t_{n+1})\|_{H,\lhr}^2}{2} + \frac{1}{2}\Big\| \frac{u^{n+1}_{h,\hat{\rho}} - u^{n}_{h,\hat{\rho}}}{\triangle t}\Big\|_{H,\lhr}^2.
			\end{multline*}
			Multiplying by $2\triangle t$ and summing over $n=0,\dots,N-1$ leads us to the result.
		\end{enumerate}
		Parts 3 and 4 follow from parts 1 and 2 without summing over $n=0,\dots,N-1$.
		\qed
	\end{proof}
	
	\subsubsection{Explicit Euler scheme}
	Concerning the explicit Euler scheme (see subsection~\ref{sec:fullydiscpr}), which applies the time discretization $\mathcal{L}(u^n_{h,\hat{\rho}}, u^{n+1}_{h,\hat{\rho}}) = \mathcal{L}(u^n_{h,\hat{\rho}})$, the following stability result holds.
	
	\begin{theorem}\label{th:explstab}
		Let $\{u_{h,\hat{\rho}}^n\}_{n=0}^{N}$ be the discrete DLR solution as defined in Algorithm~\ref{alg:ourscheme} with $\mathcal{L}(u^n_{h,\hat{\rho}}, u^{n+1}_{h,\hat{\rho}}) = \mathcal{L}(u^n_{h,\hat{\rho}})$. Then the following estimates hold:
		\begin{enumerate}
			\item \begin{multline*}
				\|u^{N}_{h,\hat{\rho}}\|_{H,\lhr}^2 + \triangle t \Ccoerc (1-\kappa)\sum_{n=0}^{N-1}\|u^{n+1}_{h,\hat{\rho}}\|_{V,\lhr}^2 \leq\|u^0_{h,\hat{\rho}}\|_{H,\lhr}^2 + \\\frac{\triangle t \Cemb^2}{\Ccoerc}\sum_{n=0}^{N-1}\|f(t_n)\|_{H,\lhr}^2
			\end{multline*} for $0< \kappa $ and $\triangle t,h$ satisfying 
			\begin{equation}\label{eq:explstabcond}
				\frac{\triangle t}{h^{2p}} \leq \frac{\kappa\,\Ccoerc}{\Cinv^2\,\Cbound^2}.
			\end{equation}
			\item If $\mathcal{L}$ is a symmetric operator we have $$\|u^{N}_{h,\hat{\rho}}\|_{\mathcal{L},\hat{\rho}}\leq\|u^0_{h,\hat{\rho}}\|_{\mathcal{L},\hat{\rho}} + \frac{\triangle t}{\kappa} \sum_{n=0}^{N-1}\|f(t_n)\|_{H,\lhr}^2,$$
		\end{enumerate}
		for $\triangle t,h$ satisfying 
		\begin{equation}\label{eq:explsymstabcond}
			\frac{\triangle t}{h^{2p}} \leq \frac{2-\kappa}{\Cinv^2 \Cbound}\quad \text{ with } 0<\kappa<2.
		\end{equation}
		Here $\Ccoerc, \Cbound, \Cemb >0$ are the coercivity, continuity and continuous embedding constants defined in \eqref{eq:coerc_const}, \eqref{eq:bound_const}, \eqref{eq:embed_const}, respectively and $\Cinv$ is the inverse inequality constant introduced in \eqref{eq:discDLRinv}.
		
		For $f = 0$ and $n = 0,\dots,N-1$ it holds:
		\begin{enumerate}
			\item[3.] $\|u^{n+1}_{h,\hat{\rho}}\|_{H,\lhr}\leq \|u^{n}_{h,\hat{\rho}}\|_{H,\lhr},$ 
			\begin{equation*}
				\text{under \corr{the} weakened condition }\qquad\frac{\triangle t}{h^{2p}} \leq \frac{2\Ccoerc}{\Cinv^2\Cbound^2 }.\hspace{4cm}
			\end{equation*}
			\item[4.] If $\mathcal{L}$ is a symmetric operator we have 
			\begin{gather*}
				\|u^{n+1}_{h,\hat{\rho}}\|_{\mathcal{L},\hat{\rho}}\leq \|u^{n}_{h,\hat{\rho}}\|_{\mathcal{L},\hat{\rho}},\\
				\|u^{n+1}_{h,\hat{\rho}}\|_{H,\lhr}\leq \|u^{n}_{h,\hat{\rho}}\|_{H,\lhr},
			\end{gather*}
			under \corr{the} weakened condition 
			\begin{equation*}
				\frac{\triangle t}{h^{2p}} \leq \frac{2}{\Cinv^2\Cbound }.
			\end{equation*}
		\end{enumerate}	
	\end{theorem}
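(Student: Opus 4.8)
Here is a plan for proving Theorem~\ref{th:explstab}.

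The starting point is Theorem~\ref{th:discvarform}: with the explicit choice $\mathcal{L}(u^n_{h,\hat\rho},u^{n+1}_{h,\hat\rho})=\mathcal{L}(u^n_{h,\hat\rho})$ and $f^{n,n+1}=f(t_n)$, the iterates satisfy $\langle \tfrac{u^{n+1}_{h,\hat\rho}-u^n_{h,\hat\rho}}{\triangle t},v_h\rangle_{H,\lhr}+\langle u^n_{h,\hat\rho},v_h\rangle_{\mathcal{L},\hat{\rho}}=\langle f(t_n),v_h\rangle_{H,\lhr}$ for every admissible $v_h=\bar v_h+v_h^*$, and by Lemma~\ref{lemma:unun+1} both $u^{n}_{h,\hat\rho}$, $u^{n+1}_{h,\hat\rho}$, hence their difference, are admissible test functions. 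For parts~1 and~3 the plan is to test with $v_h=u^{n+1}_{h,\hat\rho}$: Lemma~\ref{lemma:scprod} turns the first term into $\tfrac{1}{2\triangle t}\big(\|u^{n+1}_{h,\hat\rho}\|_{H,\lhr}^2-\|u^{n}_{h,\hat\rho}\|_{H,\lhr}^2+\|u^{n+1}_{h,\hat\rho}-u^{n}_{h,\hat\rho}\|_{H,\lhr}^2\big)$, and writing $u^n_{h,\hat\rho}=u^{n+1}_{h,\hat\rho}-(u^{n+1}_{h,\hat\rho}-u^n_{h,\hat\rho})$ splits the bilinear term into the coercive part $\langle u^{n+1}_{h,\hat\rho},u^{n+1}_{h,\hat\rho}\rangle_{\mathcal{L},\hat{\rho}}\ge\Ccoerc\|u^{n+1}_{h,\hat\rho}\|_{V,\lhr}^2$ and an error part $-\langle u^{n+1}_{h,\hat\rho}-u^n_{h,\hat\rho},u^{n+1}_{h,\hat\rho}\rangle_{\mathcal{L},\hat{\rho}}$.

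The error part I would bound by continuity of $\langle\cdot,\cdot\rangle_{\mathcal{L},\hat{\rho}}$ (constant $\Cbound$), then by the inverse inequality~\eqref{eq:discDLRinv} to replace $\|u^{n+1}_{h,\hat\rho}-u^n_{h,\hat\rho}\|_{V,\lhr}$ by $\tfrac{\Cinv}{h^p}\|u^{n+1}_{h,\hat\rho}-u^n_{h,\hat\rho}\|_{H,\lhr}$, and finally by Young's inequality chosen so that the resulting $\|u^{n+1}_{h,\hat\rho}-u^n_{h,\hat\rho}\|_{H,\lhr}^2$ is absorbed \emph{with constant exactly one} by the identical term produced by Lemma~\ref{lemma:scprod}, leaving behind $\triangle t^{2}\Cbound^2\Cinv^2 h^{-2p}\|u^{n+1}_{h,\hat\rho}\|_{V,\lhr}^2$. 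The forcing term is treated as in the proof of Proposition~\ref{prop:stab_true}(1) — Cauchy--Schwarz, the embedding~\eqref{eq:embed_const}, Young — producing $\triangle t\tfrac{\Cemb^2}{\Ccoerc}\|f(t_n)\|_{H,\lhr}^2$ at the cost of one copy of $\triangle t\Ccoerc\|u^{n+1}_{h,\hat\rho}\|_{V,\lhr}^2$; crucially Young is applied here against the $V$-norm, not the $H$-norm, so that no Gr\"onwall argument is needed. Collecting terms, the coefficient multiplying $\|u^{n+1}_{h,\hat\rho}\|_{V,\lhr}^2$ is $\triangle t\Ccoerc-\triangle t^{2}\Cbound^2\Cinv^2 h^{-2p}$, which is $\ge\triangle t\Ccoerc(1-\kappa)$ precisely under~\eqref{eq:explstabcond}; summing over $n$ telescopes the $H$-norms and gives part~1. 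Part~3 is the same computation with $f\equiv0$: no forcing budget is spent, so the coefficient of $\|u^{n+1}_{h,\hat\rho}\|_{V,\lhr}^2$ is the larger $2\triangle t\Ccoerc-\triangle t^{2}\Cbound^2\Cinv^2 h^{-2p}$, nonnegative exactly when $\triangle t/h^{2p}\le 2\Ccoerc/(\Cinv^2\Cbound^2)$, whence $\|u^{n+1}_{h,\hat\rho}\|_{H,\lhr}\le\|u^n_{h,\hat\rho}\|_{H,\lhr}$.

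For parts~2 and~4 (symmetric case, so $\langle\cdot,\cdot\rangle_{\mathcal{L},\hat{\rho}}$ is an inner product) the plan is to test instead with $v_h=(u^{n+1}_{h,\hat\rho}-u^n_{h,\hat\rho})/\triangle t$. Polarizing the bilinear term with Lemma~\ref{lemma:scprod} yields $\|u^{n+1}_{h,\hat\rho}\|_{\mathcal{L},\hat{\rho}}^2-\|u^n_{h,\hat\rho}\|_{\mathcal{L},\hat{\rho}}^2$ minus the wrong-sign term $\triangle t^{-1}\|u^{n+1}_{h,\hat\rho}-u^n_{h,\hat\rho}\|_{\mathcal{L},\hat{\rho}}^2$; bounding that by $\Cbound\Cinv^2 h^{-2p}\|u^{n+1}_{h,\hat\rho}-u^n_{h,\hat\rho}\|_{H,\lhr}^2$ via continuity and~\eqref{eq:discDLRinv} leaves net coefficient $\triangle t^{-1}\bigl(2-\triangle t\Cbound\Cinv^2 h^{-2p}\bigr)$ in front of $\|u^{n+1}_{h,\hat\rho}-u^n_{h,\hat\rho}\|_{H,\lhr}^2$, which is $\ge\kappa/\triangle t$ under~\eqref{eq:explsymstabcond}; absorbing the forcing term into it by Young (weight $\kappa/\triangle t$) and summing over $n$ gives the squared energy inequality $\|u^N_{h,\hat\rho}\|_{\mathcal{L},\hat{\rho}}^2\le\|u^0_{h,\hat\rho}\|_{\mathcal{L},\hat{\rho}}^2+\tfrac{\triangle t}{\kappa}\sum_n\|f(t_n)\|_{H,\lhr}^2$, hence part~2, whose $f\equiv0$ ($\kappa=0$) limit is exactly the $\mathcal{L}$-norm monotonicity of part~4 under $\triangle t/h^{2p}\le 2/(\Cinv^2\Cbound)$. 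The remaining $H$-norm monotonicity of part~4 is the only place where one test function does not suffice: testing with $v_h=(u^{n+1}_{h,\hat\rho}-u^n_{h,\hat\rho})/\triangle t$ and $f\equiv0$ gives $\langle u^n_{h,\hat\rho},u^{n+1}_{h,\hat\rho}-u^n_{h,\hat\rho}\rangle_{\mathcal{L},\hat{\rho}}=-\triangle t^{-1}\|u^{n+1}_{h,\hat\rho}-u^n_{h,\hat\rho}\|_{H,\lhr}^2$, whence Cauchy--Schwarz for $\langle\cdot,\cdot\rangle_{\mathcal{L},\hat{\rho}}$, continuity and~\eqref{eq:discDLRinv} yield $\|u^{n+1}_{h,\hat\rho}-u^n_{h,\hat\rho}\|_{H,\lhr}^2\le\triangle t^{2}\Cbound\Cinv^2 h^{-2p}\|u^n_{h,\hat\rho}\|_{\mathcal{L},\hat{\rho}}^2$; feeding this, together with the same relation, into the identity from $v_h=u^{n+1}_{h,\hat\rho}$ produces $\|u^{n+1}_{h,\hat\rho}\|_{H,\lhr}^2-\|u^n_{h,\hat\rho}\|_{H,\lhr}^2=\|u^{n+1}_{h,\hat\rho}-u^n_{h,\hat\rho}\|_{H,\lhr}^2-2\triangle t\|u^n_{h,\hat\rho}\|_{\mathcal{L},\hat{\rho}}^2$, so the left-hand side is $\le0$ precisely when $\triangle t\Cbound\Cinv^2 h^{-2p}\le2$.

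The main obstacle is not conceptual but a matter of sharp bookkeeping of constants: every time the inverse inequality~\eqref{eq:discDLRinv} frees a term $\|u^{n+1}_{h,\hat\rho}-u^n_{h,\hat\rho}\|_{H,\lhr}^2$, it must be matched \emph{exactly} (constant one) against the term supplied by Lemma~\ref{lemma:scprod}, and the forcing must be absorbed against the $V$-norm; any slack of a factor two in either place would degrade the CFL thresholds and miss the stated conditions~\eqref{eq:explstabcond}--\eqref{eq:explsymstabcond}. The second mildly delicate point is that the $H$-norm monotonicity in the symmetric case genuinely requires combining the two test functions, as indicated above.
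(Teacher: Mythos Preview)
Your plan is correct and follows essentially the same route as the paper: start from the discrete variational formulation (Theorem~\ref{th:discvarform}), use Lemma~\ref{lemma:unun+1} to justify $u^{n+1}_{h,\hat\rho}$ and $u^{n+1}_{h,\hat\rho}-u^n_{h,\hat\rho}$ as test functions, and combine Lemma~\ref{lemma:scprod}, coercivity/continuity, and the inverse inequality~\eqref{eq:discDLRinv}, with Young's inequality calibrated so the increment term is absorbed exactly. Parts~1--3 and the $\mathcal{L}$-norm half of part~4 match the paper almost verbatim (the paper introduces the parameter $\kappa$ inside Young's inequality in~\eqref{eq:tmpeq3}, whereas you absorb with constant one first and invoke~\eqref{eq:explstabcond} afterwards; the two computations are equivalent).

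The only genuine difference is the $H$-norm monotonicity in part~4. You derive the identity $\|u^{n+1}_{h,\hat\rho}\|_{H,\lhr}^2-\|u^{n}_{h,\hat\rho}\|_{H,\lhr}^2=\|u^{n+1}_{h,\hat\rho}-u^{n}_{h,\hat\rho}\|_{H,\lhr}^2-2\triangle t\|u^{n}_{h,\hat\rho}\|_{\mathcal{L},\hat\rho}^2$ by combining the two test functions and then bound the increment via $\|u^{n+1}_{h,\hat\rho}-u^{n}_{h,\hat\rho}\|_{H,\lhr}^2\le\triangle t^{2}\Cbound\Cinv^2 h^{-2p}\|u^{n}_{h,\hat\rho}\|_{\mathcal{L},\hat\rho}^2$. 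The paper instead tests with $u^{n+1}_{h,\hat\rho}$, applies Lemma~\ref{lemma:scprod} to the symmetric $\mathcal{L}$-form to obtain $\|u^{n+1}_{h,\hat\rho}\|_{H,\lhr}^2=\|u^{n}_{h,\hat\rho}\|_{H,\lhr}^2-\tfrac{\triangle t}{2}\bigl(\|u^{n}_{h,\hat\rho}\|_{\mathcal{L},\hat\rho}^2-\|u^{n+1}_{h,\hat\rho}\|_{\mathcal{L},\hat\rho}^2+\|u^{n+1}_{h,\hat\rho}+u^{n}_{h,\hat\rho}\|_{\mathcal{L},\hat\rho}^2\bigr)$, and concludes directly from the already-established $\mathcal{L}$-monotonicity. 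Both routes yield the same threshold $\triangle t/h^{2p}\le 2/(\Cinv^2\Cbound)$; the paper's is slightly slicker since it reuses the previous estimate, while yours is more self-contained.
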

	\begin{proof}\mbox{\\}
		Thanks to the Theorem \ref{th:discvarform} we can rewrite the system of equations in the variational formulation
		\begin{align}\label{eq:explvarform}
			\begin{split}
				&\hspace{0cm}\big\langle \frac{u^{n+1}_{h,\hat{\rho}} - u^n_{h,\hat{\rho}}}{\triangle t},\, v_h\big\rangle_{H,\lhr} + \big\langle u^{n}_{h,\hat{\rho}},\, v_h  \big\rangle_{\mathcal{L},\hat{\rho}} = \big\langle f(t_n),\,v_h\big\rangle_{H,\lhr},\\
				&\hspace{2.2cm} \forall v_h = \bar{v}_h + v_h^* \text{ with } \bar{v}_h\in V_h\text{ and }v_h^*\in \mathcal{T}_{\tilde{U}^{n+1}Y^n} \mathcal{M}_R^{h,\hat{\rho}}.
			\end{split}
		\end{align}
		\begin{enumerate}
			\item Based on Lemma \ref{lemma:unun+1} we take $v_h = u^{n+1}_{h,\hat{\rho}}$ as a test function in the variational formulation \eqref{eq:explvarform} and using Lemma \ref{lemma:scprod}  results in 
			\begin{multline*}\|u^{n+1}_{h,\hat{\rho}}\|_{H,\lhr}^2 - \|u^n_{h,\hat{\rho}}\|_{H,\lhr}^2 + \|u^{n+1}_{h,\hat{\rho}} - u^n_{h,\hat{\rho}}\|_{H,\lhr}^2 + 2\triangle t \langle u^{n}_{h,\hat{\rho}},u^{n+1}_{h,\hat{\rho}}\rangle_{\mathcal{L},\hat{\rho}} \\
				= 2\triangle t(f(t_{n}), u^{n+1}_{h,\hat{\rho}})_{V'V,\lhr}\leq \triangle t\frac{\Cemb^2}{\Ccoerc}\|f(t_n)\|_{H,\lhr}^2 +\triangle t\Ccoerc \| u^{n+1}_{h,\hat{\rho}}\|_{V,\lhr}. 
			\end{multline*}
			We further proceed by estimating
			\begin{align}
				&2\triangle t\langle u^{n}_{h,\hat{\rho}},u^{n+1}_{h,\hat{\rho}}\rangle_{\mathcal{L},\hat{\rho}} = 2\triangle t\langle u^{n}_{h,\hat{\rho}} - u^{n+1}_{h,\hat{\rho}},u^{n+1}_{h,\hat{\rho}}\rangle_{\mathcal{L},\hat{\rho}} + 2\triangle t\langle u^{n+1}_{h,\hat{\rho}},u^{n+1}_{h,\hat{\rho}}\rangle_{\mathcal{L},\hat{\rho}}\nonumber\\
				&\geq -2\triangle t \Cbound \|u^{n+1}_{h,\hat{\rho}} - u^{n}_{h,\hat{\rho}}\|_{V,\lhr}\|u^{n+1}_{h,\hat{\rho}}\|_{V,\lhr} + 2\triangle t\Ccoerc \|u^{n+1}_{h,\hat{\rho}}\|_{V,\lhr}^2\nonumber\\
				&\geq -\kappa \triangle t\Ccoerc\|u^{n+1}_{h,\hat{\rho}}\|_{V,\lhr}^2 +2\triangle t\Ccoerc\|u^{n+1}_{h,\hat{\rho}}\|_{V,\lhr}^2  - \triangle t \frac{\Cinv^2 \Cbound^2}{\kappa \,h^{2p} \Ccoerc}\|u^{n+1}_{h,\hat{\rho}} - u^{n}_{h,\hat{\rho}}\|_{H,\lhr}^2\label{eq:tmpeq3}
			\end{align}
			where, in the third step, we used the inequality $$\|u^{n+1}_{h,\hat{\rho}}-u^n_{h,\hat{\rho}}\|_{H,\lhr} \geq \frac{h^p}{\Cinv}\|u^{n+1}_{h,\hat{\rho}}-u^n_{h,\hat{\rho}}\|_{V,\lhr},$$ which holds based on assumption \eqref{eq:discDLRinv}. Combining the terms, using the condition \eqref{eq:explstabcond} and summing over $n = 0,\dots,N-1$ finishes the proof.
			\item Lemma~\ref{lemma:unun+1} enables us to take $u^{n+1}_{h,\hat{\rho}} - u^n_{h,\hat{\rho}}$ as a test function in \eqref{eq:explvarform}. This results in \begin{multline}\label{eq:proof10}
				\frac{1}{\triangle t}\|u^{n+1}_{h,\hat{\rho}} - u^n_{h,\hat{\rho}}\|_{H,\lhr}^2 + \langle u^n_{h,\hat{\rho}}, u^{n+1}_{h,\hat{\rho}} - u^n_{h,\hat{\rho}}\rangle_{\mathcal{L},\hat{\rho}} = \langle f(t_n), u^{n+1}_{h,\hat{\rho}} - u^n_{h,\hat{\rho}}\rangle_{H,\lhr}\\
				\leq \frac{\triangle t\|f(t_n)\|_{H,\lhr}^2}{2\kappa} + \frac{\kappa\,\|u^{n+1}_{h,\hat{\rho}} - u^n_{h,\hat{\rho}}\|_{H,\lhr}^2}{2\triangle t}.
			\end{multline} Using Lemma \ref{lemma:scprod} we obtain 
			\begin{align*}
				&\|u^{n+1}_{h,\hat{\rho}}\|_{\mathcal{L},\hat{\rho}}^2 \leq \|u^n_{h,\hat{\rho}}\|_{\mathcal{L},\hat{\rho}}^2 +\frac{\triangle t}{\kappa}\|f(t_n)\|_{H,\lhr}^2+ \|u^{n+1}_{h,\hat{\rho}} - u^n_{h,\hat{\rho}}\|_{\mathcal{L},\hat{\rho}}^2 \\
				&\hspace{2cm}- \frac{2-\kappa}{\triangle t}\|u^{n+1}_{h,\hat{\rho}}-u^n_{h,\hat{\rho}}\|_{H,\lhr}^2\\
				&\hspace{0.4cm}\leq \|u^n_{h,\hat{\rho}}\|_{\mathcal{L},\hat{\rho}}^2 +\frac{\triangle t}{\kappa}\|f(t_n)\|_{H,\lhr}^2+ \Big( 1-\frac{(2-\kappa)\,h^{2p}}{\Cinv^2\Cbound \triangle t} \Big)\|u^{n+1}_{h,\hat{\rho}} - u^n_{h,\hat{\rho}}\|_{\mathcal{L},\hat{\rho}}^2\\
				&\hspace{0.4cm}\leq \|u^n_{h,\hat{\rho}}\|_{\mathcal{L},\hat{\rho}}^2 +\frac{\triangle t}{\kappa}\|f(t_n)\|_{H,\lhr}^2
			\end{align*}
			where, in the second step, we used the assumption \eqref{eq:discDLRinv}, \eqref{eq:bound_const}
			and the fact that  $1-\frac{(2-\kappa) h^{2p}}{\Cinv^2\Cbound \triangle t} \leq 0$, thanks to the stability condition \eqref{eq:explsymstabcond}.
			\item The proof of part~3 follows the same steps as the proof of part~1. We have \begin{multline*}\|u^{n+1}_{h,\hat{\rho}}\|_{H,\lhr}^2 - \|u^n_{h,\hat{\rho}}\|_{H,\lhr}^2 + \|u^{n+1}_{h,\hat{\rho}} - u^n_{h,\hat{\rho}}\|_{H,\lhr}^2 + 2\triangle t \langle u^{n}_{h,\hat{\rho}},u^{n+1}_{h,\hat{\rho}}\rangle_{\mathcal{L},\hat{\rho}} = 0.
			\end{multline*} In \eqref{eq:tmpeq3} we choose $\kappa = 2$ and conclude the result.
			\item[4.] The proof of the forth property follows the same steps as the proof of part~2. Since there is no need to use the Young's inequality in \eqref{eq:proof10}, the condition on $\triangle t/h^{2p}$ is weakened:
			\begin{align*}
				\|u^{n+1}_{h,\hat{\rho}}\|_{\mathcal{L},\hat{\rho}}^2 &= \|u^n_{h,\hat{\rho}}\|_{\mathcal{L},\hat{\rho}}^2 + \|u^{n+1}_{h,\hat{\rho}} - u^n_{h,\hat{\rho}}\|_{\mathcal{L},\hat{\rho}}^2 - \frac{2}{\triangle t}\|u^{n+1}_{h,\hat{\rho}}-u^n_{h,\hat{\rho}}\|_{H,\lhr}^2\\
				&\leq \|u^n_{h,\hat{\rho}}\|_{\mathcal{L},\hat{\rho}}^2 + \Big( 1-\frac{2h^{2p}}{\Cinv^2 \Cbound \triangle t} \Big)\|u^{n+1}_{h,\hat{\rho}} - u^n_{h,\hat{\rho}}\|_{\mathcal{L},\hat{\rho}}^2.
			\end{align*}
			As for the estimate in the  $\|\cdot\|_{H,\lhr}$-norm we can derive
			\begin{align*}
				\|u^{n+1}_{h,\hat{\rho}}\|_{H,\lhr}^2 &= \|u^n_{h,\hat{\rho}}\|_{H,\lhr}^2 -\frac{\triangle t}{2} \Big( \|u^n_{h,\hat{\rho}}\|^2_{\mathcal{L},\hat{\rho}} - \|u^{n+1}_{h,\hat{\rho}}\|_{\mathcal{L},\hat{\rho}}^2\\
				&\hspace{6cm} + \|u^{n+1}_{h,\hat{\rho}} + u^n_{h,\hat{\rho}}\|_{\mathcal{L},\hat{\rho}}^2 \Big)\\
				&\leq \|u^n_{h,\hat{\rho}}\|_{H,\lhr}^2,
			\end{align*}
			where in the last inequality we applied $\|u^{n+1}_{h,\hat{\rho}}\|_{\mathcal{L},\hat{\rho}}\leq\|u^n_{h,\hat{\rho}}\|_{\mathcal{L},\hat{\rho}} $ for $\frac{\triangle t}{h^{2p}} \leq \frac{2}{\Cinv^2\Cbound}$.
		\end{enumerate}
		\qed
	\end{proof}
	
	\subsubsection{Semi-implicit scheme}
	
	This subsection is dedicated to analyzing the semi-implicit scheme introduced in subsection~\ref{sec:fullydiscpr} which applies the discretization $\mathcal{L}(u^n_{h,\hat{\rho}}, u^{n+1}_{h,\hat{\rho}}) = \Ldet(u^{n+1}_{h,\hat{\rho}}) + \Lstoch(u^n_{h,\hat{\rho}})$.
	
	Apart from the inverse inequality \eqref{eq:discDLRinv} we will be using two additional inequalities. Let us assume there exists a constant $\Cdet>0$ such that 
	\begin{equation}\label{eq:equivineq}
		\langle u, u\rangle_{\Ldet,\hat{\rho}} \geq \Cdet\,\langle u, u\rangle_{\mathcal{L},\hat{\rho}}, \hspace{1cm}\forall u\in V_h\otimes\lhr.
	\end{equation}
	This constant plays an important role in the stability estimation as it quantifies the extent to which the operator is evaluated implicitly. Its significance is summarized in Theorem \ref{th:semiimplstab}. In addition we introduce a constant $\Cstoch$ that bounds the stochasticity of the operator
	\begin{equation}\label{eq:Cstoch}
		|(\Lstoch(u), v)_{V'V,\lhr}|\leq \Cstoch \|u\|_{V,\lhr} \|v\|_{V,\lhr}.
	\end{equation}
	
	\begin{theorem}\label{th:semiimplstab}
		Let $\{u^n_{h,\hat{\rho}}\}_{n=0}^N$ be the discrete DLR solution as defined in Algorithm~\ref{alg:ourscheme} with $\mathcal{L}(u^n_{h,\hat{\rho}}, u^{n+1}_{h,\hat{\rho}}) = \Ldet(u^{n+1}_{h,\hat{\rho}}) + \Lstoch(u^n_{h,\hat{\rho}})$ with $\Ldet$ and $\Lstoch$ satisfying \eqref{eq:equivineq} and \eqref{eq:Cstoch}, respectively. Then it holds
		\begin{enumerate}
			\item \begin{multline*}
				\|u^{N}_{h,\hat{\rho}}\|_{H,\lhr}^2 + \triangle t \Ccoerc (1-\kappa)\sum_{n=0}^{N-1}\|u^{n+1}_{h,\hat{\rho}}\|_{V,\lhr}^2 \leq\|u^0_{h,\hat{\rho}}\|_{H,\lhr}^2 + \\\frac{\triangle t \Cemb^2}{\Ccoerc}\sum_{n=0}^{N-1}\|f^{n,n+1}\|_{H,\lhr}^2
			\end{multline*} for $\kappa > 0$ and $\triangle t,h$ satisfying 
			\begin{equation}\label{eq:semiimplstabcond}
				\frac{\triangle t}{h^{2p}} \leq \frac{\kappa\,\Ccoerc}{\Cinv^2\,\Cstoch^2}.
			\end{equation}
			\item If $\mathcal{L}$ is a symmetric operator we have
		\end{enumerate}
		\begin{equation}\label{eq:est_semiimpl}
			\|u^{N}_{h,\hat{\rho}}\|_{\mathcal{L},\hat{\rho}}\leq \|u^0_{h,\hat{\rho}}\|_{\mathcal{L},\hat{\rho}} + \frac{\triangle t}{\kappa}\sum_{n=0}^{N-1}\|f^{n,n+1}\|_{H,\lhr}^2
		\end{equation}
		for $\triangle t,h$ satisfying 
		\begin{equation*}
			\frac{\triangle t}{h^{2p}} \leq \left\{
			\begin{array}{lr}
				+\infty & \text{if }\; \Cdet \geq \frac{1}{2}\\
				\frac{2-\kappa}{\Cinv^2 \Cbound (1-2\Cdet)} & \text{if } \;\Cdet < \frac{1}{2}
			\end{array}
			\right.
		\end{equation*}	
		Here $\Ccoerc, \Cbound, \Cemb, \Cinv>0$ are the coercivity, continuity, continuous embedding and inverse inequality constants defined in \eqref{eq:coerc_const}, \eqref{eq:bound_const}, \eqref{eq:embed_const}, \eqref{eq:discDLRinv}, respectively. The constants $\Cdet, \Cstoch$ were introduced in \eqref{eq:equivineq}, \eqref{eq:Cstoch}.
		
		For $f = 0$ and $\mathcal{L}$ symmetric we have
		\begin{enumerate}
			\item[3.]\begin{equation}\label{eq:est_semiimpl0f}
				\|u^{n+1}_{h,\hat{\rho}}\|_{\mathcal{L},\hat{\rho}}\leq \|u^{n}_{h,\hat{\rho}}\|_{\mathcal{L},\hat{\rho}},\qquad n=0,\dots,N-1
			\end{equation}
		\end{enumerate}
		with $\triangle t,h$ satisfying a weakened condition
		\begin{equation}\label{eq:semiimplcond}
			\frac{\triangle t}{h^{2p}} \leq \left\{
			\begin{array}{lr}
				+\infty & \text{if }\; \Cdet \geq \frac{1}{2}\\
				\frac{2}{\Cinv^2 \Cbound (1-2\Cdet)} & \text{if } \;\Cdet < \frac{1}{2}
			\end{array}
			\right.
		\end{equation}		
	\end{theorem}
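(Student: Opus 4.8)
The plan is to follow the same pattern as in the proofs for the implicit and explicit Euler schemes: rewrite the scheme through its discrete variational formulation (Theorem~\ref{th:discvarform}, or Theorem~\ref{th:discvarformrankdef} if $\tilde U^{n+1}$ is rank deficient), test it against an appropriate $v_h$, and recombine the terms using the polarization identities of Lemma~\ref{lemma:scprod}. Admissibility of the test functions I will use, namely $v_h=u^{n+1}_{h,\hat\rho}$ and $v_h=(u^{n+1}_{h,\hat\rho}-u^n_{h,\hat\rho})/\triangle t$, follows from Lemma~\ref{lemma:unun+1}, which guarantees that $u^{n,*}_{h,\hat\rho}$, $u^{n+1,*}_{h,\hat\rho}$ — and hence every linear combination of them — lie in $\mathcal{T}_{\tilde U^{n+1}Y^{n^\intercal}}\mathcal{M}^{h,\hat\rho}_R$, while $\bar u^n_{h,\hat\rho},\bar u^{n+1}_{h,\hat\rho}\in V_h$. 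The only new ingredient compared with Theorem~\ref{th:explstab} is the treatment of the mixed-time operator term: writing $\Ldet(u^{n+1}_{h,\hat\rho})+\Lstoch(u^n_{h,\hat\rho})=\mathcal{L}(u^{n+1}_{h,\hat\rho})-\Lstoch(u^{n+1}_{h,\hat\rho}-u^n_{h,\hat\rho})$ isolates a fully implicit part, already at the correct time level, plus a stochastic ``lag'' term that must be controlled.

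For the first estimate I would take $v_h=u^{n+1}_{h,\hat\rho}$. By Lemma~\ref{lemma:scprod} the difference quotient contributes $\tfrac{1}{2\triangle t}(\|u^{n+1}_{h,\hat\rho}\|_{H,\lhr}^2-\|u^n_{h,\hat\rho}\|_{H,\lhr}^2+\|u^{n+1}_{h,\hat\rho}-u^n_{h,\hat\rho}\|_{H,\lhr}^2)$; coercivity \eqref{eq:coerc_const} bounds $\langle u^{n+1}_{h,\hat\rho},u^{n+1}_{h,\hat\rho}\rangle_{\mathcal{L},\hat\rho}$ from below by $\Ccoerc\|u^{n+1}_{h,\hat\rho}\|_{V,\lhr}^2$; the forcing term is handled as in Proposition~\ref{prop:stab_true} via \eqref{eq:embed_const} and Young's inequality; and the lag term is first bounded by $\Cstoch\|u^{n+1}_{h,\hat\rho}-u^n_{h,\hat\rho}\|_{V,\lhr}\|u^{n+1}_{h,\hat\rho}\|_{V,\lhr}$ using \eqref{eq:Cstoch}, then, after the inverse inequality \eqref{eq:discDLRinv} and a further Young's inequality, split into a multiple of $\|u^{n+1}_{h,\hat\rho}\|_{V,\lhr}^2$ (absorbed into the coercivity term, leaving a factor $1-\kappa$) and a multiple of $\tfrac{\triangle t}{h^{2p}}\|u^{n+1}_{h,\hat\rho}-u^n_{h,\hat\rho}\|_{H,\lhr}^2$ (absorbed into the $+\|u^{n+1}_{h,\hat\rho}-u^n_{h,\hat\rho}\|_{H,\lhr}^2$ term, which is possible exactly under the CFL condition \eqref{eq:semiimplstabcond}). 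Summing over $n$ gives Part~1; note that it uses neither symmetry of $\mathcal{L}$ nor $\Cdet$, consistently with the fact that $\Ldet$ is integrated implicitly.

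For Parts~2 and~3 ($\mathcal{L}$ symmetric, so that $\langle\cdot,\cdot\rangle_{\mathcal{L},\hat\rho}$ is a scalar product on $V_h\otimes\lhr$) I would instead test with $v_h=(u^{n+1}_{h,\hat\rho}-u^n_{h,\hat\rho})/\triangle t$. The difference quotient contributes $\tfrac{1}{\triangle t}\|u^{n+1}_{h,\hat\rho}-u^n_{h,\hat\rho}\|_{H,\lhr}^2$; by Lemma~\ref{lemma:scprod}, $\langle u^{n+1}_{h,\hat\rho},u^{n+1}_{h,\hat\rho}-u^n_{h,\hat\rho}\rangle_{\mathcal{L},\hat\rho}=\tfrac12(\|u^{n+1}_{h,\hat\rho}\|_{\mathcal{L},\hat\rho}^2-\|u^n_{h,\hat\rho}\|_{\mathcal{L},\hat\rho}^2+\|u^{n+1}_{h,\hat\rho}-u^n_{h,\hat\rho}\|_{\mathcal{L},\hat\rho}^2)$; and the lag term equals $-(\Lstoch(u^{n+1}_{h,\hat\rho}-u^n_{h,\hat\rho}),u^{n+1}_{h,\hat\rho}-u^n_{h,\hat\rho})_{V'V,\lhr}$, which is $\ge-(1-\Cdet)\|u^{n+1}_{h,\hat\rho}-u^n_{h,\hat\rho}\|_{\mathcal{L},\hat\rho}^2$ since $(\Lstoch(v),v)_{V'V,\lhr}=\langle v,v\rangle_{\mathcal{L},\hat\rho}-\langle v,v\rangle_{\Ldet,\hat\rho}\le(1-\Cdet)\langle v,v\rangle_{\mathcal{L},\hat\rho}$ by \eqref{eq:equivineq}. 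Combining these, and bounding the forcing term with a Young's inequality of parameter $\kappa$, yields
\[
  \|u^{n+1}_{h,\hat\rho}\|_{\mathcal{L},\hat\rho}^2\le\|u^n_{h,\hat\rho}\|_{\mathcal{L},\hat\rho}^2+\tfrac{\triangle t}{\kappa}\|f^{n,n+1}\|_{H,\lhr}^2-\tfrac{2-\kappa}{\triangle t}\|u^{n+1}_{h,\hat\rho}-u^n_{h,\hat\rho}\|_{H,\lhr}^2+(1-2\Cdet)\|u^{n+1}_{h,\hat\rho}-u^n_{h,\hat\rho}\|_{\mathcal{L},\hat\rho}^2 .
\]
If $\Cdet\ge\tfrac12$ the last two terms are nonpositive for any $\kappa<2$, giving unconditional stability; if $\Cdet<\tfrac12$ I would bound $\|u^{n+1}_{h,\hat\rho}-u^n_{h,\hat\rho}\|_{\mathcal{L},\hat\rho}^2\le\Cbound\Cinv^2h^{-2p}\|u^{n+1}_{h,\hat\rho}-u^n_{h,\hat\rho}\|_{H,\lhr}^2$ by \eqref{eq:bound_const} and \eqref{eq:discDLRinv}, so the two difference terms sum to something nonpositive precisely when $\tfrac{\triangle t}{h^{2p}}\le\tfrac{2-\kappa}{\Cinv^2\Cbound(1-2\Cdet)}$. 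Summing over $n$ and taking square roots as in the proof of Theorem~\ref{th:explstab}, Part~2, gives \eqref{eq:est_semiimpl}; Part~3 is the case $f=0$, where no Young's inequality is needed so one may let $\kappa\to0$, obtaining \eqref{eq:est_semiimpl0f} under the weakened condition \eqref{eq:semiimplcond}.

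The hard part, I expect, will be pinning down the algebra of the mixed-time operator term — specifically, recognizing that $(\Ldet(u^{n+1}_{h,\hat\rho})+\Lstoch(u^n_{h,\hat\rho}),u^{n+1}_{h,\hat\rho}-u^n_{h,\hat\rho})_{V'V,\lhr}$ collapses to $\tfrac12(\|u^{n+1}_{h,\hat\rho}\|_{\mathcal{L},\hat\rho}^2-\|u^n_{h,\hat\rho}\|_{\mathcal{L},\hat\rho}^2)$ plus the clean remainder $(\Cdet-\tfrac12)\|u^{n+1}_{h,\hat\rho}-u^n_{h,\hat\rho}\|_{\mathcal{L},\hat\rho}^2$. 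That single identity is what produces the threshold $\Cdet=\tfrac12$ in the CFL condition and the unconditional stability for $\Cdet\ge\tfrac12$; everything else — the choice of $\kappa$, the bookkeeping of which term absorbs which, the final summation — is routine and entirely parallel to the explicit and implicit Euler cases.
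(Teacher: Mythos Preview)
Your proposal is correct and follows essentially the same route as the paper: testing the discrete variational formulation with $u^{n+1}_{h,\hat\rho}$ for Part~1 and with $u^{n+1}_{h,\hat\rho}-u^n_{h,\hat\rho}$ for Parts~2--3, combined with Lemma~\ref{lemma:scprod}, coercivity/continuity, the inverse inequality, and Young's inequality. The only cosmetic difference is in the algebraic regrouping of the operator term in Part~2: the paper adds and subtracts $(\Ldet(u^n_{h,\hat\rho}),u^{n+1}_{h,\hat\rho}-u^n_{h,\hat\rho})_{V'V,\lhr}$ to produce $\|u^{n+1}_{h,\hat\rho}-u^n_{h,\hat\rho}\|_{\Ldet,\hat\rho}^2+\langle u^n_{h,\hat\rho},u^{n+1}_{h,\hat\rho}-u^n_{h,\hat\rho}\rangle_{\mathcal{L},\hat\rho}$, whereas you write it as $\langle u^{n+1}_{h,\hat\rho},u^{n+1}_{h,\hat\rho}-u^n_{h,\hat\rho}\rangle_{\mathcal{L},\hat\rho}-(\Lstoch(u^{n+1}_{h,\hat\rho}-u^n_{h,\hat\rho}),u^{n+1}_{h,\hat\rho}-u^n_{h,\hat\rho})_{V'V,\lhr}$; both are the same identity and yield the identical remainder $(1-2\Cdet)\|u^{n+1}_{h,\hat\rho}-u^n_{h,\hat\rho}\|_{\mathcal{L},\hat\rho}^2$, hence the same threshold $\Cdet=\tfrac12$ and the same CFL conditions.
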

	\begin{proof}
		The variational formulation of the discrete DLR problem from Algorithm~\ref{alg:ourscheme} reads in this case
		\begin{align}\label{eq:semiimplvarform}
			\begin{split}
				&\hspace{0cm}\big\langle \frac{u^{n+1}_{h,\hat{\rho}} - u^n_{h,\hat{\rho}}}{\triangle t},\, v_h\big\rangle_{H,\lhr} + \big\langle u^{n+1}_{h,\hat{\rho}},\, v_h  \big\rangle_{\Ldet,\hat{\rho}} + \big( \Lstoch(u^n_{h,\hat{\rho}}),v_h \big)_{V'V,\lhr} \\
				&\hspace{0.1cm}= \big\langle f^{n,n+1},\,v_h\big\rangle_{H,\lhr}\quad \forall v_h = \bar{v}_h + v_h^* \text{ with } \bar{v}_h\in V_h\text{ and }v_h^*\in \mathcal{T}_{\tilde{U}^{n+1}Y^n} \mathcal{M}_R^{h,\hat{\rho}}.
			\end{split}
		\end{align}
		\begin{enumerate}
			\item We will consider $v_h = u^{n+1}_{h,\hat{\rho}}$ as a test function in \eqref{eq:semiimplvarform} and we derive
			\begin{align*}
				&\|u^{n+1}_{h,\hat{\rho}}\|_{H,\lhr}^2 + 2\triangle t \langle u^{n+1}_{h,\hat{\rho}}, u^{n+1}_{h,\hat{\rho}}\rangle_{\mathcal{L},\hat{\rho}}\\
				&\hspace{0.1cm}= \|u^{n}_{h,\hat{\rho}}\|_{H,\lhr}^2 - \|u^{n+1}_{h,\hat{\rho}} - u^{n}_{h,\hat{\rho}}\|_{H,\lhr}^2 + 2\triangle t\langle f^{n,n+1}, u^{n+1}_{h,\hat{\rho}}\rangle_{H,\lhr}\\
				&\hspace{0.5cm} + 2\triangle t(\Lstoch(u^{n+1}_{h,\hat{\rho}} - u^{n}_{h,\hat{\rho}}), u^{n+1}_{h,\hat{\rho}})_{V'V,\lhr}\\
				&\leq \|u^{n}_{h,\hat{\rho}}\|_{H,\lhr}^2 - \|u^{n+1}_{h,\hat{\rho}} - u^{n}_{h,\hat{\rho}}\|_{H,\lhr}^2 + \triangle t\frac{\Cemb^2}{\Ccoerc}\|f^{n,n+1}\|_{H,\lhr}^2 + \triangle t\Ccoerc \|u^{n+1}_{h,\hat{\rho}}\|_{V,\lhr}^2\\
				& \hspace{0.5cm}+ \kappa \triangle t\Ccoerc \|u^{n+1}_{h,\hat{\rho}}\|_{V,\lhr}^2 + \triangle t \frac{\Cinv^2 \Cstoch^2}{\kappa h^{2p} \Ccoerc}\|u^{n+1}_{h,\hat{\rho}} - u^{n}_{h,\hat{\rho}}\|_{H,\lhr}^2.
			\end{align*}
			Combining the terms and summing over $n = 0,\dots,N-1$ finishes the proof.
			
			\item We will proceed by taking $v_h = u^{n+1}_{h,\hat{\rho}} - u^n_{h,\hat{\rho}}$ in the variational formulation \eqref{eq:semiimplvarform} since $(u^{n+1}_{h,\hat{\rho}} - u^n_{h,\hat{\rho}})^*\in\mathcal{T}_{\tilde{U}^{n+1}Y^n} \mathcal{M}_R^{h,\hat{\rho}}$ (Lemma \ref{lemma:unun+1}). We obtain
			\begin{align}
				&\frac{1}{\triangle t} \|u^{n+1}_{h,\hat{\rho}} - u^n_{h,\hat{\rho}}\|^2_{H,\lhr} + \langle u^{n+1}_{h,\hat{\rho}},u^{n+1}_{h,\hat{\rho}} - u^n_{h,\hat{\rho}}\rangle_{\Ldet,\hat{\rho}} \\
				&\hspace{0.8cm} + \big( \Lstoch(u^n_{h,\hat{\rho}}), u^{n+1}_{h,\hat{\rho}} - u^n_{h,\hat{\rho}}\big)_{V'V,\lhr} \pm \big( \Ldet(u^n_{h,\hat{\rho}}), u^{n+1}_{h,\hat{\rho}} - u^n_{h,\hat{\rho}}\big)_{V'V,\lhr} \nonumber\\
				&\hspace{0.1cm} = \frac{1}{\triangle t} \|u^{n+1}_{h,\hat{\rho}} - u^n_{h,\hat{\rho}}\|^2_{H,\lhr} + \langle u^{n+1}_{h,\hat{\rho}} - u^n_{h,\hat{\rho}}, u^{n+1}_{h,\hat{\rho}} - u^n_{h,\hat{\rho}}\rangle_{\Ldet,\hat{\rho}} \nonumber\\
				&\hspace{0.8cm}+ \langle u^n_{h,\hat{\rho}}, u^{n+1}_{h,\hat{\rho}}-u^n_{h,\hat{\rho}}\rangle_{\mathcal{L},\hat{\rho}}\nonumber\\
				&\hspace{0.1cm}= \langle f^{n,n+1},u^{n+1}_{h,\hat{\rho}} - u^n_{h,\hat{\rho}}\rangle_{H,\lhr}\leq \frac{\triangle t}{2\kappa}\|f^{n,n+1}\|_{H,\lhr}^2 + \frac{\kappa}{2\triangle t}\|u^{n+1}_{h,\hat{\rho}} - u^n_{h,\hat{\rho}}\|_{H,\lhr}^2.\label{eq:tmpeq4}
			\end{align}
			Using Lemma \ref{lemma:scprod} we further derive 
			\begin{align*}
				\|u^{n+1}_{h,\hat{\rho}}\|_{\mathcal{L},\hat{\rho}}^2 &\leq \|u^n_{h,\hat{\rho}}\|_{\mathcal{L},\hat{\rho}}^2 + \frac{\triangle t}{\kappa}\|f^{n,n+1}\|_{H,\lhr}^2 + \|u^{n+1}_{h,\hat{\rho}} - u^n_{h,\hat{\rho}}\|_{\mathcal{L},\hat{\rho}}^2 \\
				&\hspace{0.5cm}- \frac{2-\kappa}{\triangle t} \|u^{n+1}_{h,\hat{\rho}} - u^n_{h,\hat{\rho}}\|_{H,\lhr}^2 \\
				&\hspace{0.5cm}- 2\langle u^{n+1}_{h,\hat{\rho}} - u^n_{h,\hat{\rho}}, u^{n+1}_{h,\hat{\rho}} - u^n_{h,\hat{\rho}}\rangle_{\Ldet,\hat{\rho}}\\
				&\hspace{-1.5cm}\leq \|u^n_{h,\hat{\rho}}\|_{\mathcal{L},\hat{\rho}}^2 + \frac{\triangle t}{\kappa} \|f^{n,n+1}\|_{H,\lhr}^2+ \|u^{n+1}_{h,\hat{\rho}} - u^n_{h,\hat{\rho}}\|_{\mathcal{L},\hat{\rho}}^2 \\
				&\hspace{0.5cm}-\frac{(2-\kappa) h^{2p}}{\Cinv^2\Cbound \triangle t}\|u^{n+1}_{h,\hat{\rho}} - u^n_{h,\hat{\rho}}\|_{\mathcal{L},\hat{\rho}}^2 -2\Cdet\|u^{n+1}_{h,\hat{\rho}} - u^n_{h,\hat{\rho}}\|_{\mathcal{L},\hat{\rho}}^2\\
				&\hspace{-1.5cm}= \|u^n_{h,\hat{\rho}}\|_{\mathcal{L},\hat{\rho}}^2 + \frac{\triangle t}{\kappa}\|f^{n,n+1}\|_{H,\lhr}^2 \\
				&\hspace{0.5cm}+ \big(1 - \frac{(2-\kappa) h^{2p}}{\Cinv^2\Cbound \triangle t} - 2\Cdet\big)\|u^{n+1}_{h,\hat{\rho}} - u^n_{h,\hat{\rho}}\|_{\mathcal{L},\hat{\rho}}^2,
			\end{align*}
			where in the second step we used the inequalities \eqref{eq:discDLRinv}, \eqref{eq:bound_const} and \eqref{eq:equivineq}. From the condition on $\triangle t, h$ after summing over $n=0,\dots,N-1$ the equation \eqref{eq:est_semiimpl} follows.
			\item To treat the case of $f = 0$ we follow analogous steps as in part 2. We consider $\kappa = 0$ as there is no need for the Young inequality in \eqref{eq:tmpeq4}.
		\end{enumerate}
		\qed
	\end{proof}
	
	Theorem~\ref{th:semiimplstab} tells us that when $\mathcal{L}$ is a symmetric operator, using the semi-implicit scheme leads to a conditionally stable solution if $\Cdet \in (0,\frac{1}{2})$ and an unconditionally stable solution, if $\Cdet \geq \frac{1}{2}$ (small randomness).
	
	\begin{remark}
		The discrete variational formulation \eqref{eq:discDLRvar} as well as the stability estimates presented in this section hold for the full-rank solution of the projector-splitting scheme from \cite{Lubich14} with the ordering $K,S,L$, as presented in subsection~\ref{sec:projsplit}. However, these results do not hold with the ordering $K,L,S$, which was discussed in \cite{Lubich14}. This might be another reason why $K,L,S$ performs poorly when compared to $K,S,L$ (see \cite[sec. 5.2]{Lubich14}).
	\end{remark}
	
	\begin{remark}
		All of the derived estimates for the discrete DLR solution obtained by Algorithm~\ref{alg:ourscheme} hold also for the case of $\{u_{h,\hat{\rho}}^n\}_{n=0}^N$ being rank-deficient for some $n=0,\dots,N$ as a consequence of Theorem \ref{th:discvarformrankdef} and the property \eqref{rem:unun+1}. It is not clear whether the Algorithm~\ref{alg:projsplit} satisfies a similar variational formulation. However, the numerical results from subsection \ref{sec:rankdef} seem to exhibit similar stability properties.
	\end{remark}
	
	\section{Example: random heat equation}\label{sec:heateq}
	
	In this section we will specifically address the case of a random heat equation. We will analyze what the underlying assumptions require of this problem, present the explicit and semi-implicit discretization schemes applied to a heat equation and state their stability properties.
	
	Let $D\subset \mathbb{R}^d,\; 1\leq d\leq 3$ be a polygonal domain. Let $V = H^1_0(D)=:H^1_0,\, H=L^2(D)=:L^2,\, V'=H^{-1}(D)=:H^{-1}$ and $\mathcal{L}(x,\xi)(v) = -\nabla\cdot(a(x,\xi)\nabla v)$ with 
	\begin{equation}\label{eq:heatamin}
		0 < a_{\min}\leq a(x,\xi)\leq a_{\max} < \infty,\quad\forall x\in D,\,\forall\xi\in\Gamma.
	\end{equation}
	In this case, the scalar products $\langle v,w\rangle_{H,\lr},\,\langle v,w\rangle_{V,\lr},\, \langle v,w\rangle_{\mathcal{L},\rho}$ are defined as 
	\begin{align*}
		\langle v,w\rangle_{H,\lr} &= \intG{\intD{v\,w}}\\
		\langle v,w\rangle_{V,\lr} &= \intG{\intD{\nabla v\cdot\nabla w}}\\
		\langle v,w\rangle_{\mathcal{L},\rho} &= \intG{\intD{a\nabla v\cdot\nabla w}}.
	\end{align*}
	For the coercivity constant $\Ccoerc$, it holds $\Ccoerc\geq a_{\min}$; for the continuity constant $\Cbound$, we have $\Cbound \leq a_{\max}$; $\Cemb$ is the Poincar\'{e} constant and the problem states: Given $f\in L^2(0,T; \lr(\Gamma;L^2))$ and $u_0\in \lr(\Gamma;L^2)$, find $u\tr\in L^2(0,T; \lr(\Gamma;H^1_0))$ with $\dot{u}\tr\in L^2(0,T; \lr(\Gamma;H^{-1}))$ such that
	\begin{align}\label{eq:heatpr1}
		\begin{split}
			&\intG{\intD{ \dot{u}\tr v}} + \intG{\intD{ a\nabla u\tr\cdot\nabla v}} = \intG{\intD{f v}},\\
			& \hspace{8cm} \forall v\in \lr(\Gamma;H_0^1)\\
			& u\tr = 0 \qquad\qquad \text{a.e. on }(0,T]\times \partial D\times\Gamma\\
			& u\tr(0,\cdot,\cdot) = u_0 \qquad\qquad  \text{a.e. in } D\times\Gamma. 
		\end{split}
	\end{align}
	The discretization is performed as described in Section \ref{sec:disc}. To address the condition \eqref{eq:discDLRinv} we can consider a triangulation $\mathcal{T}_h$ of the domain $D$ specified by the discretization parameter $h$ and a corresponding finite element space $V_h$ of continuous piece-wise polynomials of degree $\leq r$. Under the condition that the family of meshes $\{\mathcal{T}_h\}_h$ is quasi-uniform (see \cite[Def.\ 1.140]{Ern04/1} for definition), we have the inverse inequality (see \cite[Cor.\ 1.141]{Ern04/1})
	\begin{equation*}
		\|\nabla v\|_{H}^2 \leq \frac{\Cinv^2}{h^2} \|v\|_{H}^2,\qquad\forall v\in V_h
	\end{equation*}
	for some $\Cinv > 0$. Integrating over $\Gamma$ results in
	\begin{equation}\label{eq:discDLRinvheat}
		\|v\|_{V,\lhr}^2 \leq \frac{\Cinv^2}{h^2} \|v\|_{H,\lhr}^2,\qquad\forall v\in V_h\otimes\lhr,
	\end{equation}
	i.e.\ we have the condition \eqref{eq:discDLRinv} with $p=1$.
	\subsection{Explicit Euler scheme}
	Applying the explicit Euler scheme in the operator evaluation for a random heat equation, i.e.\ $$\mathcal{L}(u^n_{h,\hat{\rho}}, u^{n+1}_{h,\hat{\rho}}) = -\nabla\cdot(a\nabla u^n_{h,\hat{\rho}}),$$ results in the following system of equations 
	\begin{align*}
		&\langle \bar{u}^{n+1},v_h\rangle_{H} = \langle \bar{u}^{n},v_h\rangle_{H} - \triangle t\,\langle  \mathbb{E}_{\hat{\rho}}[a\nabla u^n_{h,\hat{\rho}}], \nabla v_h\rangle_{H} + \triangle t \langle \mathbb{E}\hr[f(t_n)], v_h\rangle_H, \\
		&\hspace{10.2cm} \forall v_h \in V_h\\[10pt]
		&\langle \tilde{U}^{n+1}_j,v_h \rangle_{H} = \langle {U}^{n}_j,v_h \rangle_{H} - \triangle t\, \langle \mathbb{E}_{\hat{\rho}}[a\nabla u^n_{h,\hat{\rho}} Y^n_j], \nabla v_h\rangle_{H} \\
		&\hspace{5.2cm}+ \triangle t\langle \mathbb{E}\hr[f(t_n) Y^n_j],v_h\rangle_H\qquad \forall j,\:\forall v_h \in V_h\\[5pt]
		&\tilde{M}^{n+1}(\tilde{Y}^{n+1} - Y^n)^\intercal =  - \triangle t \,\Pro_{\hat{\rho},\mathcal{Y}^n}^{\perp} \Big[  \langle a\nabla u^n_{h,\hat{\rho}},\nabla\tilde{U}^{n+1} \rangle_{H}  -  \langle f(t_n),\tilde{U}^{n+1} \rangle_{H} \Big]^\intercal\\
		&\hspace{10.7cm} \text{in }\lhr.
	\end{align*}
	The stability properties stated in Theorem \ref{th:explstab} part 2 and 4 hold under the condition
	\begin{equation*}
		\frac{\triangle t}{h^2} \leq \frac{2-\kappa}{\Cinv^2 \Cbound}.
	\end{equation*}
	
	\subsection{Semi-implicit scheme}
	Let us consider the decomposition 
	\begin{equation}
		a = \bar{a} + \astoch, \quad\text{with } \quad \bar{a} = \mathbb{E}_{\hat{\rho}}[a] \quad\text{and} \quad\mathbb{E}\hr[\astoch] = 0,
	\end{equation}
	i.e.\ $$\mathcal{L}(u) = \underbrace{-\nabla\cdot(\bar{a}\nabla u)}_{\Ldet} \underbrace{- \nabla\cdot(\astoch\nabla u)}_{\Lstoch}.$$
	
	The condition \eqref{eq:Ldet} is satisfied, since $\bar{a}$ is positive everywhere in $D$ as assumed in  \eqref{eq:heatamin}. Hence, $$\langle u,v\rangle_{\Ldet} = \intD{\bar{a}\,\nabla u\cdot\nabla v},\qquad u,v\in V$$ is a scalar product on $V = H^1_0(D)$. The semi-implicit time integration is realized by 
	\begin{equation}
		\mathcal{L}(u^n_{h,\hat{\rho}}, u^{n+1}_{h,\hat{\rho}}) = -\nabla\cdot(\bar{a}\nabla u^{n+1}_{h,\hat{\rho}}) - \nabla\cdot(\astoch\nabla u^n_{h,\hat{\rho}}).
	\end{equation}
	Note that the condition \eqref{eq:equivineq} is automatically satisfied for a random heat equation, since we have 
	\begin{multline*}
		\|u\|_{\Ldet,\rho}^2 = \intG{\intD{\bar{a}\nabla u\cdot\nabla u}} \geq \inf_{x\in D,\xi\in\Gamma}\frac{\bar{a}}{a}\:\intG{\intD{a\nabla u\cdot\nabla u}} \\= \inf_{x\in D,\xi\in\Gamma}\frac{\bar{a}}{a}\:\|u\|_{\mathcal{L},\rho}^2
		\qquad\forall u\in \lrV,
	\end{multline*}
	and $\inf_{x\in D,\xi\in\Gamma}\frac{\bar{a}}{a}\geq \frac{a_{\min}}{a_{\max}}>0$.\\
	
	The system of equations \eqref{eq:semieq1}--\eqref{eq:semieq3} can be rewritten as
	\begin{align*}
		&\langle \bar{u}^{n+1}, v_h\rangle_{H} + \triangle t\langle \bar{a}\nabla \bar{u}^{n+1}, \nabla v_h\rangle_H \\
		&\hspace{1.5cm}= \langle \bar{u}^{n}, v_h\rangle_{H} - \triangle t \langle \mathbb{E}_{\hat{\rho}}[\astoch\nabla u^n_{h,\hat{\rho}}], \nabla v_h\rangle_{H^d} + \triangle t\langle\mathbb{E}\hr[f^{n,n+1}],v_h\rangle_H\\[3pt]
		&\langle \tilde{U}^{n+1}_j, v_h\rangle_{H} + \triangle t\langle \bar{a}\nabla\tilde{U}^{n+1}_j, \nabla v_h\rangle_{H}\\
		&\hspace{0.4cm} =  \langle \tilde{U}^{n}_j, v_h\rangle_{H} - \triangle t \langle \mathbb{E}_{\hat{\rho}}[\astoch \nabla u^n_{h,\hat{\rho}} Y^n_j], \nabla v_h\rangle_{H^d} + \triangle t\langle\mathbb{E}\hr[f^{n,n+1}Y^n_j],v_h\rangle_H\\[3pt]
		&\Big(\tilde{Y}^{n+1} - Y^n\Big)\big( \tilde{M}^{n+1} + \triangle t\langle\bar{a}\nabla\tilde{U}^{{n+1}^\intercal}, \nabla\tilde{U}^{n+1}\rangle_{H} \big)\\
		&\hspace{2.6cm}= - \triangle t \Pro_{\hat{\rho},\mathcal{Y}^n}^{\perp} [\langle\astoch\nabla u^n_{h,\hat{\rho}}, \nabla\tilde{U}^{n+1}\rangle_{H^d} - \langle f^{{n,n+1}^*}, \tilde{U}^{n+1}\rangle_H].
	\end{align*}
	For a further specified diffusion coefficient we can state the following stability properties.

	\begin{proposition}\label{lemma:semiimplheat}
		For the case $$\bar{a}(x) \geq a_{\mathrm{stoch}}(x,\xi),\qquad \forall x\in D,\xi\in\Gamma$$ which is satisfied in particular if
		\begin{equation}\label{eq:heatdiff}
			\begin{gathered}
				a(x,\xi) = \bar{a}(x) + \sum_{j=1}^M a_j(x) \xi_j,\\
				\Gamma\subset\mathbb{R}^M \text{ and } \Gamma \text{ is symmetric, i.e.\ }\xi\in\Gamma\implies -\xi\in\Gamma,
			\end{gathered}
		\end{equation}
		we have the stability properties \eqref{eq:est_semiimpl} and \eqref{eq:est_semiimpl0f} for any $\triangle t, h$.
	\end{proposition}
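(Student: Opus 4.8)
The plan is to obtain the statement directly from Theorem~\ref{th:semiimplstab}, whose symmetric-operator estimates \eqref{eq:est_semiimpl} and \eqref{eq:est_semiimpl0f} hold \emph{unconditionally} in $\triangle t$ and $h$ as soon as the constant $\Cdet$ in \eqref{eq:equivineq} can be taken $\geq \tfrac12$. For the random heat equation the semi-discrete form $\langle v,w\rangle_{\mathcal{L},\hat{\rho}}=\sum_{k}\lambda_k\intD{a(\cdot,\omega_k)\nabla v\cdot\nabla w}$ is symmetric, the bound \eqref{eq:Cstoch} holds since $\astoch=a-\bar a$ is uniformly bounded by \eqref{eq:heatamin}, and the inverse inequality \eqref{eq:discDLRinvheat} is at hand, so that the hypotheses of Theorem~\ref{th:semiimplstab} are met; the only real point is therefore to upgrade the coercivity constant to $\Cdet\geq\tfrac12$ using $\bar a(x)\geq\astoch(x,\xi)$.

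First I would reduce \eqref{eq:equivineq} with $\Cdet=\tfrac12$ to a pointwise inequality. Because $\mathcal{L}=\Ldet+\Lstoch$ one has $\langle u,u\rangle_{\mathcal{L},\hat{\rho}}=\langle u,u\rangle_{\Ldet,\hat{\rho}}+(\Lstoch(u),u)_{V'V,\lhr}$, so $\langle u,u\rangle_{\Ldet,\hat{\rho}}\geq\tfrac12\langle u,u\rangle_{\mathcal{L},\hat{\rho}}$ is equivalent to $\langle u,u\rangle_{\Ldet,\hat{\rho}}\geq(\Lstoch(u),u)_{V'V,\lhr}$. Writing both sides as weighted sums of integrals over $D$, namely $\langle u,u\rangle_{\Ldet,\hat{\rho}}=\sum_{k=1}^{\hat{N}}\lambda_k\intD{\bar a(x)\,|\nabla u(x,\omega_k)|^2}$ and $(\Lstoch(u),u)_{V'V,\lhr}=\sum_{k=1}^{\hat{N}}\lambda_k\intD{\astoch(x,\omega_k)\,|\nabla u(x,\omega_k)|^2}$, the inequality follows termwise from the pointwise bound $\bar a(x)\,|\nabla u|^2\geq\astoch(x,\omega_k)\,|\nabla u|^2$ (valid since $|\nabla u|^2\geq0$, the collocation nodes $\omega_k$ belong to $\Gamma$, and $\bar a(x)\geq\astoch(x,\omega_k)$ by hypothesis) together with the positivity of the weights $\lambda_k$. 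Equivalently, $\Cdet=\inf_{x\in D,\xi\in\Gamma}\bar a(x)/a(x,\xi)=\inf_{x,\xi}\bar a/(\bar a+\astoch)\geq\tfrac12$ precisely when $\bar a\geq\astoch$ and $a>0$. Plugging $\Cdet\geq\tfrac12$ into the $\Cdet\geq\tfrac12$ branches of Theorem~\ref{th:semiimplstab} then gives \eqref{eq:est_semiimpl} and \eqref{eq:est_semiimpl0f} for every $\triangle t,h>0$.

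It remains to verify that the affine, symmetric case \eqref{eq:heatdiff} indeed satisfies $\bar a(x)\geq\astoch(x,\xi)$. There $\xi\mapsto\astoch(x,\xi)=a(x,\xi)-\bar a(x)$ is affine, and — taking the collocation node set symmetric so that $\mathbb{E}\hr[\xi_j]=0$, equivalently $\bar a=\mathbb{E}\hr[a]$ coincides with the mean coefficient — it is odd, $\astoch(x,\xi)=\sum_{j=1}^{M}a_j(x)\xi_j$. Fixing $x\in D$ and $\xi\in\Gamma$, symmetry of $\Gamma$ gives $-\xi\in\Gamma$, and uniform ellipticity \eqref{eq:heatamin} at $-\xi$ reads $0<a_{\min}\leq a(x,-\xi)=\bar a(x)+\astoch(x,-\xi)=\bar a(x)-\astoch(x,\xi)$, so $\astoch(x,\xi)\leq\bar a(x)-a_{\min}\leq\bar a(x)$, as needed. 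I expect the only delicate point to be this last bit of bookkeeping around the \emph{discrete} mean: one must make sure $\astoch$ is genuinely odd in $\xi$ with respect to $\hat{\rho}$ (i.e.\ $\mathbb{E}\hr[\xi_j]=0$); granting that, every step is immediate and the rest is a direct invocation of Theorem~\ref{th:semiimplstab}.
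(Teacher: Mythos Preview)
Your proof is correct and follows the same route as the paper: show that the hypothesis $\bar a\geq\astoch$ forces $\Cdet\geq\tfrac12$ (equivalently $\bar a/a\geq\tfrac12$) and then invoke the unconditional branch of Theorem~\ref{th:semiimplstab}. You actually give more than the paper does: the paper's proof handles only the implication $\bar a\geq\astoch\Rightarrow\Cdet\geq\tfrac12$ and leaves the ``in particular'' clause unargued, whereas your observation that positivity of $a(x,-\xi)$ together with oddness of $\astoch$ yields $\astoch(x,\xi)<\bar a(x)$ is exactly the missing verification, and your caveat about needing $\mathbb{E}\hr[\xi_j]=0$ so that the discrete mean $\bar a=\mathbb{E}\hr[a]$ matches the leading term in \eqref{eq:heatdiff} is a genuine subtlety the paper glosses over.
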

	
	\begin{proof}
		The condition $\bar{a}(x) \geq a_{\mathrm{stoch}}(x,\xi)$ for every $ x\in D,\xi\in\Gamma$ implies 
		\begin{equation*}
			\frac{\bar{a}(x)}{a(x,\xi)} \geq \frac{1}{2},
		\end{equation*}
		i.e.\ $\Cdet\geq\inf_{x\in D,\xi\in\Gamma} \frac{\bar{a}}{a}\geq \frac{1}{2}.$ Together with Theorem \ref{th:semiimplstab} we conclude the result.
		\qed
	\end{proof}

	Proposition \ref{lemma:semiimplheat} tells us that applying a semi-implicit scheme to solve a heat equation with diffusion coefficient as described in \eqref{eq:heatdiff} results in an unconditionally stable scheme. This result as well as some of the previous estimates will be numerically verified in the following section.
	
	\section{Numerical results}\label{sec:numres}
	
	This section is dedicated to numerically study the stability estimates derived for a discrete DLR approximation in Section~\ref{sec:stabest}. In particular, we will be concerned with a random heat equation, as introduced in \eqref{eq:heatpr1}, with zero forcing term and diffusion coefficient of the form \eqref{eq:heatdiff}. We will look at the behavior of suitable norms of the solutions of the discretization schemes introduced in Section \ref{sec:fullydiscpr}. We will as well look at a discretization scheme in which the projection is performed explicitly to see how important it is to project on the new computed basis $\tilde{U}^{n+1}$ in \eqref{eq:discDLReq3}. As a last result we provide a comparison with the projector-splitting scheme from \cite{Lubich14}.
	
	Let us consider problem \eqref{eq:heatpr1} set in a unit square $D = [0,1]^2$ and sample space $\Gamma = [-1, 1]^M$ with an uncertain diffusion coefficient 
	\begin{equation}
		a(x,\xi) = a_0 + \sum_{m=1}^M \frac{\cos(2\pi m x_1) + \cos(2\pi m x_2)}{m^2 \pi^2}\xi_m,
	\end{equation}
	where $x = (x_1, x_2)\in D,\; \xi = (\xi_1,\dots,\xi_M)\in\Gamma$. 
	We let $a_0 = 0.3$, and equip $([-1,1]^M,\mathcal{B}([-1,1]^M))$ with the uniform measure $\rho(\dx \xi) = \bigotimes_{i=1}^{M} \frac{\lambda(\dx \xi_i)}{2}$ with $\lambda$ the Lebesgue measure restricted to the Borel $\sigma$-algebra  $\mathcal{B}([-1,1])$.
	In this case the conditions \eqref{eq:heatamin}, \eqref{eq:Ldet} and \eqref{eq:equivineq} are satisfied with $a_{\min} > 0.04, \Cdet> \frac{1}{2}$. The initial condition is chosen as
	\begin{align}
		\begin{split}\label{eq:init_cond}
			u_0&(x,\xi) = 10\sin(\pi x_1)\sin(\pi x_2) + 2\sin(2\pi x_1)\sin(2\pi x_2)\xi_1 \\
			&\hspace{0.5cm} + 2\sin(4\pi x_1)\sin(4\pi x_2)\xi_2 + 2\sin(6\pi x_1)\sin(6\pi x_2)\xi_1^2.\\
			&=10\sin(\pi x_1)\sin(\pi x_2) + \frac{4}{3}\sin(6\pi x_1)\sin(6\pi x_2)+ 2\sin(2\pi x_1)\sin(2\pi x_2)\xi_1\\
			&\hspace{0.5cm} + 2\sin(4\pi x_1)\sin(4\pi x_2)\xi_2 + 2\sin(6\pi x_1)\sin(6\pi x_2)\big(\xi_1^2 - \mathbb{E}[\xi_1^2]\big).
		\end{split}
	\end{align}	
	
	The spatial discretization is performed by the finite element (FE) method with $P1$ finite elements over a uniform mesh. The dimension of the corresponding FE space is determined by $h$---the element size. For this type of spatial discretization we have the inverse inequality \eqref{eq:discDLRinvheat}: $$\|v\|_{V,\hat{\rho}}^2 \leq \frac{\Cinv^2}{h^2} \|v\|_{H,\lhr}^2,\qquad\forall v\in V_h\otimes\lhr.$$ Concerning the stochastic discretization we will consider a tensor grid quadrature with Gauss-Legendre points for the case of a low-dimensional stochastic space $M = 2$ and a Monte-Carlo quadrature for the case $M = 10$. The time integration implements the explicit scheme and the semi-implicit scheme described in subsection~\ref{sec:fullydiscpr}. We will consider the forcing term $f = 0$, i.e.\ a dissipative problem and time $T$ such that the energy norm ($\|\cdot\|_{\mathcal{L},\hat{\rho}}$) of the solution attains a value smaller than $10^{-10}$. Our simulations were performed using the Fenics library \cite{Fenics}.
	\subsection{Explicit scheme}
	Since $f = 0$, the result in Theorem \ref{th:explstab} predicts a decay of the norm of the solution 
	\begin{equation*}
		\|u^{n+1}_{h,\hat{\rho}}\|_{\mathcal{L},\hat{\rho}}\leq\|u^n_{h,\hat{\rho}}\|_{\mathcal{L},\hat{\rho}},\qquad\|u^{n+1}_{h,\hat{\rho}}\|_{H,\lhr}\leq\|u^n_{h,\hat{\rho}}\|_{H,\lhr}\qquad\forall n=0,\dots,N-1
	\end{equation*}
	under the stability condition 
	\begin{equation}\label{eq:numexplcond}
		\frac{\triangle t}{h^2}\leq \frac{2}{\Cinv^2\Cbound} = : K.
	\end{equation}
	We aim at verifying such result numerically. We set a rank $R = 3$ and consider a sample space $[-1,1]^M$ of dimension $M = 2$ or $M = 10$ with either Gauss-Legendre or Monte-Carlo (MC) stochastic discretization.
	\subsubsection{$M = 2$}
	First we consider the sample space $[-1,1]^M$ of dimension $M=2$ and Gauss-Legendre quadrature with $9\times 9 = 81$ collocation points. 
	From what we observed in our simulations, for this test case we have $K\approx 0.085$. Figure \ref{fig:expl3scen} shows the behavior of the energy norm ($\|\cdot\|_{\mathcal{L},\hat{\rho}}$) and the $L^2$ norm ($\|\cdot\|_{H,\lhr}$) in 3 different scenarios: in the first scenario we set $h_1 = 0.142, \triangle t_1 = 0.0018$, i.e.\ the condition $\triangle t_1/h_1^2 \leq K$ is satisfied and observe that both the energy norm and the $L^2$ norm of the solution decrease in time (see Figure \ref{fig:expl3scen}(a)); in the second scenario, we halved the element size $h_2 = h_1/2$ and divided by 4 the time step $\triangle t_2 = \triangle t_1/4$ so that the condition \eqref{eq:numexplcond} is still satisfied. The norms again decreased in time (Figure \ref{fig:expl3scen}(b)); in the third scenario we violated the condition \eqref{eq:numexplcond} by setting $h_3 = h_1/2$ and $\triangle t_3 = \triangle t_1/3$. After a certain time the norms exploded (Figure \ref{fig:expl3scen}(c)).
	
	\begin{figure} 
		\captionsetup[subfigure]{justification=centering}
		\centering
		\begin{minipage}{0.32\textwidth}
			\center\includegraphics[width=\linewidth]{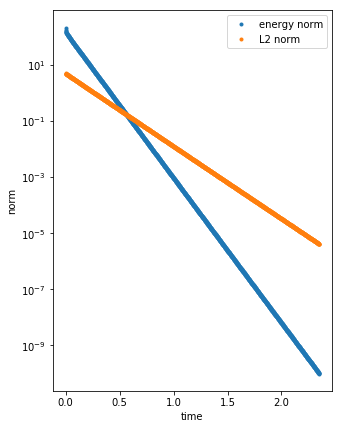}
			\subcaption{\footnotesize$h_1~=~0.142,$\\$\triangle t_1~=~0.0017$}
		\end{minipage}
		\begin{minipage}{0.32\textwidth}
			\center\includegraphics[width=\linewidth]{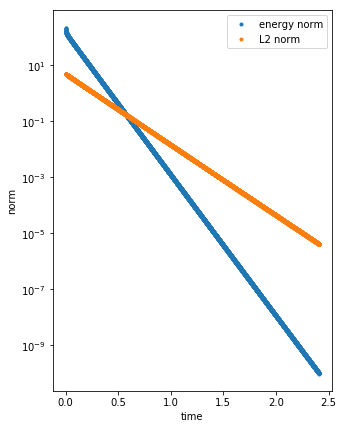}
			\subcaption{\footnotesize$h_1~=~0.142/2,$\\$\triangle t_1~=~0.0017/4$}
		\end{minipage}
		\begin{minipage}{0.32\textwidth}
			\center\includegraphics[width=\linewidth]{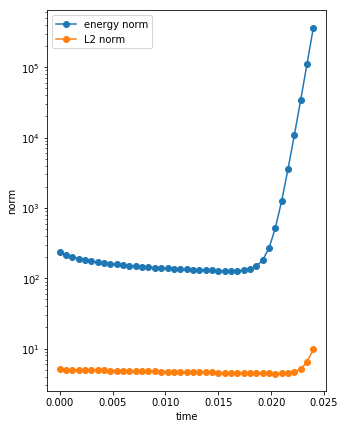}
			\subcaption{\footnotesize$h_1~=~0.142/2,$\\$\triangle t_1~=~0.0017/3$}
		\end{minipage}
		\caption{Behavior of the energy norm ($\|\cdot\|_{\mathcal{L},\rho}$---blue) and the $L^2$ norm ($\|\cdot\|_{H,\lr}$--- orange) when applying the explicit time integration scheme with $M=2$ and 81 Gauss-Legendre collocation points for three different pairs of the discretization parameters $h,\triangle t$. 
			When the condition \eqref{eq:numexplcond} is satisfied the solution is stable [(a)--(b)], whereas violating the condition results in instability [(c)]
		} \label{fig:expl3scen}
	\end{figure}

	To numerically demonstrate the sharpness of the condition \eqref{eq:numexplcond}, we ran the simulation with 72 different pairs of discretization parameters $h,\triangle t$. The results are shown in Figure \ref{fig:expl}, where we depict whether the energy norm at time $T$ is bellow $10^{-10}$, in which case the norm was consistently decreasing; or more than $10^4$, in which case the solution blew up. We observe that a stable $\triangle t$ has to be chosen to satisfy $\triangle t \leq K h^2$, which confirms the sharpness of our theoretical derivations.
	
	\begin{figure}
		\center\includegraphics[width=0.75\linewidth]{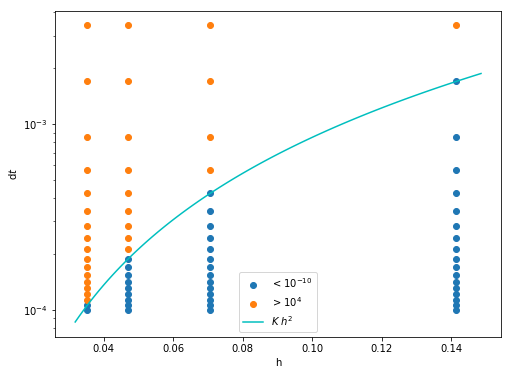}
		\caption{This figure shows whether the energy norm $\|\cdot\|_{\mathcal{L},\rho}$ of the solution was monotonously decreasing till $10^{-10}$ (blue) or has blown up (orange) for different choices of time step $\triangle t$ and discretization parameter $h$ when applying the explicit scheme for the operator evaluation. We observe a clear quadratic dependence of $\triangle t$ on $h$. $K$ was set to $0.085$}\label{fig:expl}
	\end{figure}
	\subsubsection{$M = 10$}
	In our second example we will consider a higher-dimensional problem: $M = 10$ for which we use a standard Monte-Carlo technique with $50$ points. We observe a very similar behavior as in the small dimensional case. Figure \ref{fig:expl3scenMC} shows that satisfying the condition $\triangle t_1/h_1^2 \leq K$ with $K = 0.085$ results in a stable scheme while violating it makes the solution blow up.
	
	\begin{figure} 
		\captionsetup[subfigure]{justification=centering}
		\centering
		\begin{minipage}{0.32\textwidth}
			\center\includegraphics[width=\linewidth]{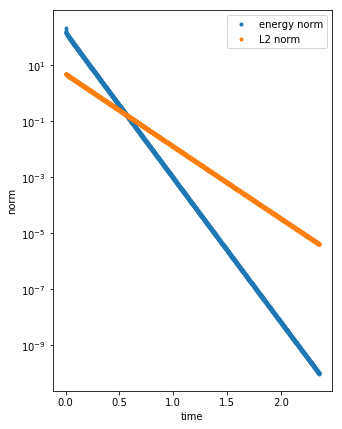}
			\subcaption{\footnotesize$h_1=0.142,$\\$\triangle t_1 = 0.0017$}
		\end{minipage}
		\begin{minipage}{0.32\textwidth}
			\center\includegraphics[width=\linewidth]{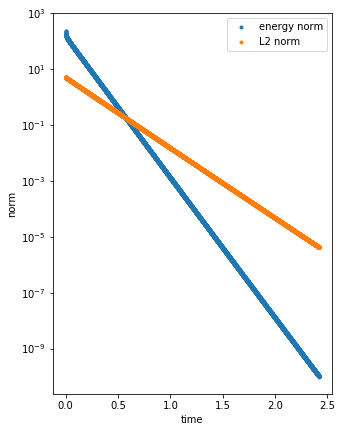}
			\subcaption{\footnotesize$h_1 = 0.142/2,$\\$\triangle t_1 = 0.0017/4$}
		\end{minipage}
		\begin{minipage}{0.32\textwidth}
			\center\includegraphics[width=\linewidth]{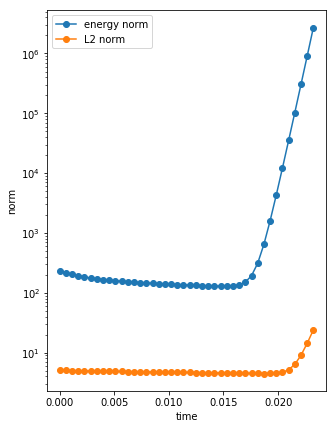}
			\subcaption{\footnotesize$h_1 = 0.142/2,$\\$\triangle t_1 = 0.0017/3$}
		\end{minipage}
		\caption{Behavior of the energy norm ($\|\cdot\|_{\mathcal{L},\rho}$---blue) and the $L^2$ norm ($\|\cdot\|_{H,\lr}$---orange) when applying the explicit time integration scheme with $M=10$ and 50 Monte Carlo points for three different pairs of the discretization parameters $h,\triangle t$. We see, again, that satisfying the condition \eqref{eq:numexplcond} ((a) and (b)) results in stable behavior while when violating the condition (c) the solution blows up} \label{fig:expl3scenMC}
	\end{figure}
	
	\subsection{Semi-implicit scheme}
	
	We proceed with the same test-case with $M = 10$, same spatial and stochastic discretization, i.e.\ Monte-Carlo method with 50 samples and employ a semi-implicit scheme in the operator evaluation.  
	Since the diffusion coefficient considered is of the form \eqref{eq:heatdiff} and $f = 0$, Theorem \ref{th:semiimplstab} predicts $$\|u^{n+1}_{h,\hat{\rho}}\|_{\mathcal{L},\hat{\rho}}\leq\|u^n_{h,\hat{\rho}}\|_{\mathcal{L},\hat{\rho}}\qquad \forall h,\,\triangle t,\;\forall n=0, \dots, N-1.$$ We set the spatial discretization $h = 0.142$ and vary the time step $\triangle t$. We observe a stable behavior no matter what $\triangle t$ is used, which confirms the theoretical result (see Figure \ref{fig:impl2scen}).
	
	\begin{figure} 
		\begin{minipage}{0.45\textwidth}
			\center\includegraphics[width=0.9\linewidth]{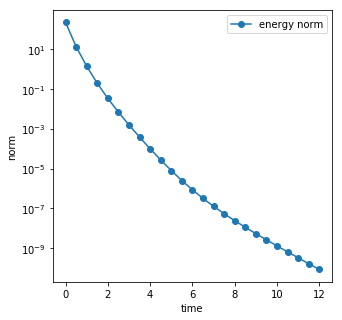}
			\subcaption{\footnotesize$h=0.142,\;\triangle t_1 = 0.5$}
		\end{minipage}
		\begin{minipage}{0.45\textwidth}
			\center\includegraphics[width=0.9\linewidth]{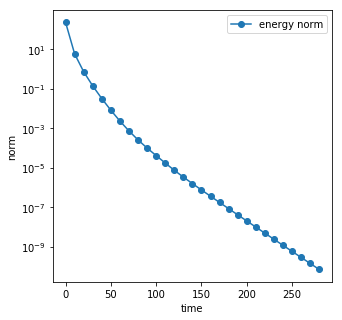}
			\subcaption{\footnotesize$h = 0.142,\;\triangle t_2 = 10$}
		\end{minipage}
		\caption{Behavior of the energy norm ($\|\cdot\|_{\mathcal{L},\rho}$) for two different time steps when applying the semi-implicit time integration scheme. We observe a decrease of norms for arbitrarily large time step} \label{fig:impl2scen}
	\end{figure}
	
	We report that the results for $M=2$ with 81 Gauss-Legendre collocation points exhibited a similar unconditionally-stable behavior.
	
	\subsubsection{Explicit projection}
	
	The following results give an insight into the importance of performing the projection in a `Gauss-Seidel' way, i.e.\ projection on the stochastic basis is done explicitly, $Y^n$ kept from the previous time step, while the projection on the deterministic basis is done implicitly, i.e.\ we use the new computed $\tilde{U}^{n+1}$ (see Algorithm~\ref{alg:ourscheme} for more details). For comparison we consider a fully explicit projection, i.e.\ $Y^n$ as the stochastic basis and $U^n$ as the deterministic basis. We use a semi-implicit scheme to treat the operator evaluation term as described in subsection~\ref{sec:fullydiscpr}. As shown in Figure \ref{fig:proj_expl}, in all 3 cases the solution reaches the zero steady state, however, not in a monotonous way. 
	\begin{figure} 
		\begin{minipage}{0.32\textwidth}
			\center\includegraphics[width=\linewidth]{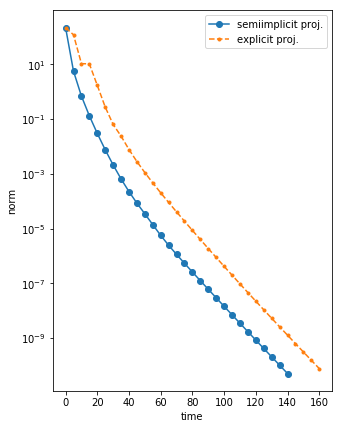}
			\subcaption{\footnotesize$h=0.142,\;\triangle t_1 = 5$}
		\end{minipage}
		\begin{minipage}{0.32\textwidth}
			\center\includegraphics[width=\linewidth]{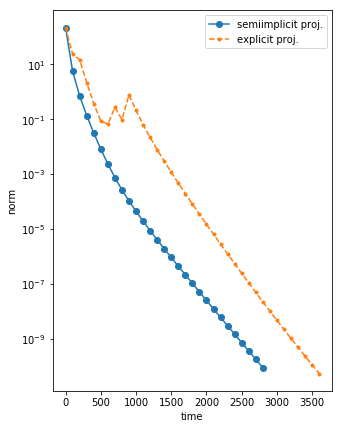}
			\subcaption{\footnotesize$h = 0.142,\;\triangle t_2 = 100$}
		\end{minipage}
		\begin{minipage}{0.32\textwidth}
			\center\includegraphics[width=\linewidth]{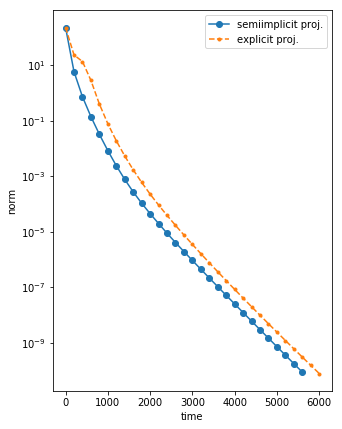}
			\subcaption{\footnotesize$h = 0.142,\;\triangle t_3 = 200$}
		\end{minipage}
		\caption{Behavior of the energy norm ($\|\cdot\|_{\mathcal{L},\rho}$) for 3 different time steps when treating the projection in an explicit way (orange) and in a semi-implicit way (blue). We used the semi-implicit scheme for the operator evaluation term. We see that, as opposed to a semi-implicit projection, with an explicit projection we do not obtain an unconditional norm decrease} \label{fig:proj_expl}
	\end{figure}
	
	\subsection{Comparison with the DDO projector-splitting scheme}\label{sec:rankdef}
	
	We now compare the performance of the discretization scheme from Algorithm~\ref{alg:ourscheme} with the projector-splitting scheme from Algorithm~\ref{alg:projsplit}.
	
	We proceed with setting $h = 0.142, M = 10, \triangle t = 100$, stochastic discretization is performed again by Monte-Carlo method with 50 points and we implemented the semi-implicit scheme in the operator evaluation for both the Algorithm~\ref{alg:ourscheme} and the projector-splitting Algorithm~\ref{alg:projsplit}. We expect that the energy norm decreases on every step independently of the time step size.
	
	We fix $R = 3$. Throughout the whole simulation, the computed solution stays full rank, in which case the two schemes have been shown to be equivalent (see subsection~\ref{sec:projsplit}). In Figure \ref{fig:splitR3}(a) this can be well observed. Steps~2 and 5 from Algorithm~\ref{alg:projsplit} are performed by a QR decomposition, whereas the linear system in \eqref{eq:discDLReq3} is solved by the Cholesky factorization (with a help of the SciPy library \cite{Scipy}, version 0.19.1).

	\begin{figure} 
		\begin{minipage}{0.45\textwidth}
			\center\includegraphics[width=0.9\linewidth]{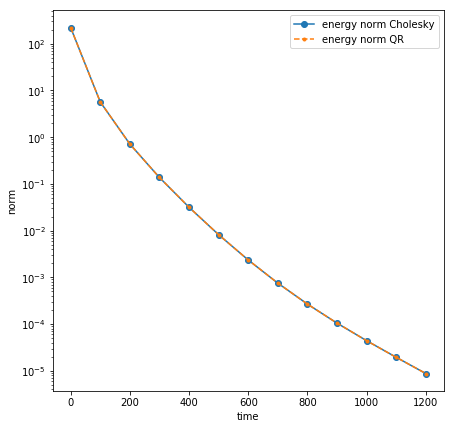}
			\subcaption{\footnotesize$R = 3,\; \triangle t_1 = 100$}
		\end{minipage}
		\begin{minipage}{0.45\textwidth}
			\center\includegraphics[width=0.9\linewidth]{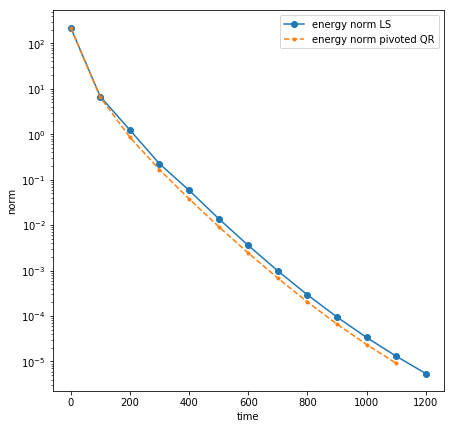}
			\subcaption{\footnotesize$R = 20, \;\triangle t_2 = 100$}
		\end{minipage}
		\caption{Energy norm ($\|\cdot\|_{\mathcal{L},\rho}$) for 2 different ranks $R = 3,20$ and 2 different time discretization schemes: Algorithm~\ref{alg:projsplit} with (pivoted) QR decomposition (orange) and Algorithm~\ref{alg:ourscheme} with Cholesky factorization or least squares. Both methods in both cases exhibit a monotonous decrease of the energy norm} \label{fig:splitR3}
	\end{figure}
	
	We now investigate the behavior of the two algorithms in presence of a rank deficient solution. We fix $R = 20$. The initial condition \eqref{eq:init_cond} is of rank $3$. For the first couple of steps the DLR solution therefore stays of rank lower than $R = 20$. The matrix $\tilde{M}^{n+1}$ from \eqref{eq:discDLReq3} is singular and the solution of the system \eqref{eq:discDLReq3} is obtained as a least squares solution implemented via an SVD decomposition. The threshold to detect the effective rank of $\tilde{M}^{n+1}$ is set to $\varepsilon\, \sigma_1 R$ where $\varepsilon$ is the machine precision and $\sigma_1$ is the largest singular value of $\tilde{M}^{n+1}$. Steps~2 and 5 from Algorithm~\ref{alg:projsplit} are performed by a pivoted QR decomposition. The solution obtained by Algorithm~\ref{alg:ourscheme} is proved to be stable in this scenario. The two proposed schemes exhibit minor differences, however both of them are stable (see Figure~\ref{fig:splitR3}(b)).
	
	\section{Conclusions}\label{sec:conclusions}
	In this work we proposed and analyzed three types of discretization schemes, namely explicit, implicit and semi-implicit, to obtain a numerical solution of the DLR system of evolution equations for the deterministic and stochastic modes. Such discrete DLR solution was obtained by projecting the discretized dynamics on the tangent space of the low-rank manifold at an intermediate point. This point was built using the new-computed deterministic modes and old stochastic modes. We found this projection property to be useful when investigating stability of the DLR solution. The solution obtained by the implicit scheme remains unconditionally bounded by the data in suitable norms. Concerning the explicit and semi-implicit schemes, we derived stability conditions on the time step, independent of the smallest singular value, under which the solution remains bounded. Remarkably, applying the proposed semi-implicit scheme to a random heat equation with diffusion coefficient affine with respect to random variables results in a scheme unconditionally stable, with the same computational complexity as the explicit scheme. Our theoretical derivations are supported by numerical tests applied to a random heat equation with zero forcing term. In the semi-implicit case, we observed that the norm of the solution consistently decreases for every time-step considered. In the explicit case, our numerical results suggest that our theoretical stability condition on the time step is in fact sharp. Our future work includes investigating if the proposed approach can be extended to higher-order projector-splitting integrators, or used to show stability properties for other types of equations.
	
	\section*{Acknowledgments}
This work has been supported by the Swiss National Science Foundation under the Project n. 172678 ``Uncertainty Quantification Techniques for PDE constrained optimization and random evolution equations''.

\appendix
\section*{Appendix}
Let $(\Omega,\mathcal{F}, \rho)$ be a measure space. Let $V$ be a separable
Banach space, and $V'$ be its topological dual space. Let $\mathfrak{L}(V,V')$
be the space of bounded linear operators equipped with the operator
norm. Moreover, let $\mathcal{\mathcal{B}}(V)$, $\mathcal{\mathcal{B}}(V')$,
$\mathcal{B}(\mathfrak{L}(V,V'))$ be the corresponding Borel $\sigma$-algebras.
\begin{propap}\label{prop:m-bility}
	Suppose that $L\colon\Omega\to\mathfrak{L}(V,V')$ is $\mathcal{F}/\mathcal{B}(\mathfrak{L}(V,V'))$-measurable.
	Let a measurable mapping $(\Omega,\mathcal{F})\ni\omega\mapsto v(\omega)\in(V,\mathcal{B}(V))$
	be given. Then, the mapping
	\[
	(\Omega,\mathcal{F})\ni\omega\mapsto L(\omega)v(\omega)\in(V',\mathcal{B}(V'))
	\]
	is measurable. In particular, if $V'$ is separable,  $(\Omega,\mathcal{F})\ni\omega\mapsto L(\omega)v(\omega)\in V'$
	is strongly measurable.
\end{propap}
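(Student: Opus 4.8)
The plan is to reduce the statement to the case of a \emph{constant} second argument, where measurability follows immediately from continuity of the evaluation map, and then to recover the general case by approximating $v$ uniformly by countably-valued measurable maps. The key preliminary observation is that for every fixed $x\in V$ the evaluation map $\mathrm{ev}_x\colon\mathfrak{L}(V,V')\to V'$, $T\mapsto Tx$, is Lipschitz continuous, since $\|Tx-Sx\|_{V'}\le\|T-S\|_{\mathfrak{L}(V,V')}\|x\|_{V}$; hence $\mathrm{ev}_x$ is $\mathcal{B}(\mathfrak{L}(V,V'))/\mathcal{B}(V')$-measurable, and therefore $\omega\mapsto L(\omega)x=\mathrm{ev}_x(L(\omega))$ is $\mathcal{F}/\mathcal{B}(V')$-measurable by composition.

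The main obstacle is that one cannot simply invoke joint continuity of $(T,x)\mapsto Tx$ together with the (obvious) measurability of $\omega\mapsto(L(\omega),v(\omega))$: when $\mathfrak{L}(V,V')$ carries the operator norm it is in general not separable even if $V$ and $V'$ are, so the Borel $\sigma$-algebra of the product $\mathfrak{L}(V,V')\times V$ need not coincide with $\mathcal{B}(\mathfrak{L}(V,V'))\otimes\mathcal{B}(V)$, and the "continuous $\circ$ measurable" shortcut fails. The approximation step is designed to bypass this. Using separability of $V$, fix a countable dense set $\{x_n\}_{n\in\N}\subset V$ and, for each $k\in\N$, set $v_k(\omega)=x_{n_k(\omega)}$ with $n_k(\omega)=\min\{n:\|x_n-v(\omega)\|_{V}<1/k\}$. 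Each $v_k$ is $\mathcal{F}/\mathcal{B}(V)$-measurable because $\{v_k=x_n\}=\{\|x_n-v(\cdot)\|_{V}<1/k\}\setminus\bigcup_{m<n}\{\|x_m-v(\cdot)\|_{V}<1/k\}\in\mathcal{F}$ (the maps $\omega\mapsto\|x_m-v(\omega)\|_{V}$ being compositions of $v$ with continuous functions), and $\|v_k(\omega)-v(\omega)\|_{V}\le 1/k$ for all $\omega$. For fixed $k$ the map $\omega\mapsto L(\omega)v_k(\omega)=\sum_{n}(L(\omega)x_n)\mathbf{1}_{\{v_k=x_n\}}(\omega)$ is then $\mathcal{F}/\mathcal{B}(V')$-measurable, since for Borel $B\subseteq V'$ one has $\{L(\cdot)v_k(\cdot)\in B\}=\bigcup_{n}\bigl(\{v_k=x_n\}\cap\{L(\cdot)x_n\in B\}\bigr)$, a countable union of $\mathcal{F}$-sets by the previous step.

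Finally I would let $k\to\infty$. Since $L(\omega)$ is a bounded operator for each $\omega$, $\|L(\omega)v_k(\omega)-L(\omega)v(\omega)\|_{V'}\le\|L(\omega)\|_{\mathfrak{L}(V,V')}\|v_k(\omega)-v(\omega)\|_{V}\to 0$, so $L(\cdot)v_k(\cdot)\to L(\cdot)v(\cdot)$ pointwise in $V'$. It remains to note that a pointwise norm-limit of $\mathcal{F}/\mathcal{B}(V')$-measurable maps into the metric space $V'$ is $\mathcal{F}/\mathcal{B}(V')$-measurable, and here no separability of $V'$ is needed: for any \emph{closed} $C\subseteq V'$ the map $\omega\mapsto\mathrm{dist}_{V'}(L(\omega)v(\omega),C)=\lim_{k}\mathrm{dist}_{V'}(L(\omega)v_k(\omega),C)$ is measurable, being a pointwise limit of measurable real functions (using that $y\mapsto\mathrm{dist}_{V'}(y,C)$ is $1$-Lipschitz), so $\{L(\cdot)v(\cdot)\in C\}=\{\mathrm{dist}_{V'}(L(\cdot)v(\cdot),C)=0\}\in\mathcal{F}$; since the closed sets generate $\mathcal{B}(V')$, this gives the first assertion. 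For the last assertion, if $V'$ is separable then every $\mathcal{B}(V')$-measurable $V'$-valued map is strongly measurable by the Pettis measurability theorem, hence $\omega\mapsto L(\omega)v(\omega)$ is strongly measurable.
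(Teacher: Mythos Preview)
Your proof is correct. Both your argument and the paper's hinge on the same two ingredients: measurability of $\omega\mapsto L(\omega)x$ for each fixed $x\in V$ (via continuity of the evaluation $T\mapsto Tx$), and separability of $V$ to pass from constant second argument to variable $v(\omega)$. The difference is purely in packaging. The paper factors $\omega\mapsto L(\omega)v(\omega)$ through $(\omega,\phi)\mapsto L(\omega)\phi$ on $\Omega\times V$, observes that this is a Carath\'eodory function (measurable in $\omega$, continuous in $\phi$), and invokes a standard lemma (Aliprantis--Border, Lemma~4.51) stating that Carath\'eodory functions on $\Omega\times V$ with $V$ separable are $\mathcal{F}\otimes\mathcal{B}(V)$-measurable. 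You instead carry out by hand the approximation that underlies that lemma: approximate $v$ uniformly by countably-valued measurable maps $v_k$, check measurability of $\omega\mapsto L(\omega)v_k(\omega)$ by partitioning $\Omega$, and pass to the limit via the distance-to-closed-set trick. Your route is more self-contained and makes explicit why separability of $V'$ is not needed for Borel measurability (only for the strong-measurability addendum, via Pettis), whereas the paper's route is shorter but relies on an external reference. Your remark about the failure of the ``joint continuity on $\mathfrak{L}(V,V')\times V$'' shortcut is well taken and is precisely why the paper also routes the argument through $\Omega\times V$ rather than $\mathfrak{L}(V,V')\times V$.
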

\begin{proof}
	We will show that the mapping $\omega\mapsto L(\omega)v(\omega)$
	is the composition of measurable mappings
	\[
	(\Omega,\mathcal{F})\xrightarrow{\omega\mapsto(\omega,v(\omega))}(\Omega\times V,\mathcal{\mathcal{F}}\otimes\mathcal{B}(V))\xrightarrow{(\omega,\phi)\mapsto L(\omega)\phi}(V',\mathcal{B}(V')).
	\]
	The first mapping is measurable, since for every product set $A\times B\in\mathcal{\mathcal{F}}\times\mathcal{B}(V)$
	its pre-image is in $\mathcal{F}$. We show that the second mapping
	is measurable. First, notice that for each $\phi\in V$
	\[
	L(\cdot)\phi\colon\Omega\to V'
	\]
	is $\mathcal{F}/\mathcal{B}(V')$-measurable. Indeed, from the assumption,
	$\omega\mapsto L(\omega)\in\mathfrak{L}(V,V')$ is $\mathcal{F}/\mathcal{B}(\mathfrak{L}(V,V'))$
	measurable, and the mapping $\mathfrak{L}(V,V')\ni\bar{L}\mapsto\bar{L}\phi\in V'$ is 
	continuous. Thus, the $\mathcal{F}/\mathcal{B}(V')$-measurability
	follows. Therefore, since $L(\omega)\colon V\to V'$ is continuous
	for each $\omega\in\Omega$, the mapping
	
	\[
	(\Omega\times V,\mathcal{\mathcal{F}}\otimes\mathcal{B}(V))\ni(\omega,\phi)\mapsto L(\omega)\phi\in(V',\mathcal{B}(V'))
	\]
	is a Carath\'{e}odory function. 
	Hence, from the separability of $V$, the
	measurability of the second mapping follows, see \cite[Lemma 4.51]{Aliprantis.C.D_Border_2006_book}.
	Now the proof is complete.
\end{proof}

\clearpage
\bibliography{document.bib}
\bibliographystyle{abbrv}

\end{document}